\theoremstyle{definition}
\newtheorem{satz}{Satz}[section] 
\newtheorem{lemma}[satz]{Lemma}
\newtheorem{korollar}[satz]{Corollary}
\newtheorem{proposition}[satz]{Proposition}
\newtheorem{remark}[satz]{Remark}
\newtheorem{theorem}[satz]{Theorem}
\newtheorem{definition}[satz]{Definition} 
\newtheorem{beispiel}[satz]{Example} 
\newcommand{\N}{\mathbb{N}}
\newcommand{\C}{\mathbb{C}}
\newcommand{\T}{\mathbb{T}}
\DeclareMathOperator{\id}{id}
\title{Hypergraph $C^*$-algebras}
\author{ 
Mirjam Trieb\\
    \And
    \href{https://www.uni-saarland.de/lehrstuhl/weber-moritz.html}{\hspace{1mm}Moritz Weber} \\
	Department of Mathematics,
	Saarland University\\
	\url{weber@math.uni-sb.de}
	\And 
	Dean Zenner\\
	\url{zenner@math.uni-sb.de}
}
\begin{document}
\maketitle

\begin{abstract}
	We give a definition of hypergraph $C^*$-algebras. These generalize the well-known graph $C^*$-algebras as well as ultragraph $C^*$-algebras. In contrast to those objects, hypergraph $C^*$-algebras are not always nuclear. We provide a number of non-nuclear examples, we prove a Gauge-Invariant Uniqueness Theorem for a subclass of hypergraph $C^*$-algebras and we study moves on hypergraphs which generalize the moves in the theory of graph $C^*$-algebras.
\end{abstract}


\section{\textbf{Introduction}}
In 1980, Cuntz-Krieger algebras \cite{CuntzKrieger} were introduced which were later extended to the more general class of graph $C^*$-algebras. Given a graph, the idea is to associate a universal $C^*$-algebra generated by projections corresponding to the vertices and partial isometries corresponding to the edges, satisfying certain relations coming from the graph, see below. Graph $C^*$-algebras form a huge and very important class of examples in the theory of $C^*$-algebras and they are very well understood. We refer to \cite{Raeburn} for an overview on this topic. Recently, Eilers, Restorff, Ruiz and S\o rensen \cite{complete_classification, geometric_finite_graphs} classified graph $C^*$-algebras by means of K-theory. Their result is a big breakthrough in the theory of graph $C^*$-algebras.
 There have been several attempts to generalize graph $C^*$-algebras. In particular, we want to mention the work by Tomforde \cite{Tomforde} on ultragraph $C^*$-algebras. Furthermore, there is related work on higher rank graph $C^*$-algebras \cite{higherrank}, Exel-Laca algebras \cite{ExelLaca}, Cuntz-Pimsner algebras \cite{CuntzPimsner}, 
topological graph $C^*$-algebras \cite{Katsura}, 
 edge separated graph algebras \cite{Ara, Duncan},
 Quantum Cuntz-Krieger algebras \cite{QuantumCuntzKrieger1, QuantumCuntzKrieger2}, just to name a few.

 In this article, we propose a definition of hypergraph $C^*$-algebras (note that Fritz \cite{Fritz} also gave a definition of a hypergraph $C^*$-algebra which is completely different from ours). A continuation of the present article is the work by Sch\"afer and the second author \cite{Schaefer}, on nuclearity aspects, and the work by Faro\ss\; \cite{faross} on quantum symmetries.
 Our objects are in their syntax relatively close to graph and ultragraph $C^*$-algebras. 
 
 A (finite) hypergraph $H\Gamma=(V,E,r,s)$ is given by a (finite) set of vertices $V$, a (finite) set of edges $E$ and source and range maps $s,r:E\to \mathcal P(V)\backslash\{\emptyset\}$, where $\mathcal P(V)$ is the powerset of $V$. In this framework, a graph is a hypergraph with the property $\lvert s(e)\rvert=\lvert r(e)\rvert=1$ for all edges $e\in E$. In other words, when passing from graphs to hypergraphs, we replace the set $V$ for $s,r:E\to V$ by the powerset $\mathcal P(V)\backslash \{\emptyset\}$. An ultragraph is a hypergraph with the property $\lvert s(e)\rvert =1$ for all edges $e\in E$ (and $\lvert r(e)\rvert\geq 1$). In this article, we mostly restrict to the case of finite hypergraphs. 
 
 Here is an overview of the definitions of graph $C^*$-algebras, ultragraph $C^*$-graphs (in our language) and our definition of hypergraph $C^*$-algebras: Given a finite hypergraph $H\Gamma=(V,E,r,s)$, we consider the universal $C^*$-algebra generated by mutually orthogonal projections $p_v$, $v\in V$ and partial isometries $s_e$, $e\in E$ such that the following relations hold.

 \begin{tabular}{ll|c|c|c}
&
      &graph $C^*$-algebra
      &ultragraph $C^*$-algebra
      &hypergraph $C^*$-algebra  \\\hline
\multicolumn{2}{l|}{restrictions on $r$ and $s$}
      & $\lvert s(e)\rvert=\lvert r(e)\rvert =1$
      & $\lvert s(e)\rvert=1$
      &none\\
Relation (1) &$\forall e,f\in E$
      &$s_e^*s_f=\delta_{ef}p_{r(e)}$
      &$s_e^*s_f=\delta_{ef}\sum_{v\in r(e)}p_v$
      &$s_e^*s_f=\delta_{ef}\sum_{v\in r(e)}p_v$\\
Relation (2a) &$\forall e\in E$
      &
      &$s_es_e^*\leq p_{s(e)}$ 
      & $s_es_e^*\leq\sum_{v\in s(e)}p_v$   \\
Relation (2b) &$\forall v \in V:s^{-1}(v)\neq\emptyset$
      &$p_v =\sum_{\substack{e\in E\\s(e)=v}}s_es_e^*$
      &$p_v \leq\sum_{\substack{e\in E\\v\in s(e)}}s_es_e^*$ 
      &$p_v \leq\sum_{\substack{e\in E\\v\in s(e)}}s_es_e^*$ 
 \end{tabular}

\pagebreak

A generalization of graph $C^*$-algebras is desirable for  various reasons:
\begin{enumerate}
    \item Graph $C^*$-algebras behave nicely with respect to their combinatorial data -- much of the structure of the $C^*$-algebra may be read from the underlying graph. Hence, extending the class by generalizing slightly the underlying combinatorial objects  promises to produce a class with similar nice behaviour.
    \item The class of graph $C^*$-algebras does not satisfy the ``2 out of 3'' property: Given a short exact sequence of $C^*$-algebras, two of which are graph $C^*$-algebras, the third one might fail to be in that class.
    \item The class of graph $C^*$-algebras restricts to nuclear $C^*$-algebras.
    \item The class of graph $C^*$-algebras restricts to $C^*$-algebras whose $K_1$-groups are free.
    \item Since the class of graph $C^*$-algebras has been completely classified, it is time for the next, larger class.
\end{enumerate}
    
The article is organized as follows. Section \ref{sec:headings} contains preliminaries on universal $C^*$-algebras and on graph $C^*$-algebras. In Section \ref{sec:others}, we define hypergraph $C^*$-algebras (for finite hypergraphs, but as a side remark also for infinite hypergraphs), we show that they generalize graph $C^*$-algebras and ultragraph $C^*$-algebras, we show that they are always unital, we give a number of easy examples, we discuss the problem of defining paths in hypergraphs and we study the decomposition of ranges. In Section \ref{sec:nonnuclear}, we give a number of examples of non-nuclear hypergraph $C^*$-algebras such as $C(S^1)*\mathbb C^n$ and $\mathcal O_m *\mathbb C^n$, and we show how to construct a number of further such examples. Recall that all graph $C^*$-algebras and all ultragraph $C^*$-algebras are nuclear, so the class of hyperclass $C^*$-algebras is substantially larger. In Section \ref{sec:gauge}, we investigate gauge uniqueness for hypergraph $C^*$-algebras: A Gauge-Invariant Uniqueness Theorem holds for all graph $C^*$-algebras and all ultragraph $C^*$-algebras, but it fails to be true in general for hypergraph $C^*$-algebras (we give a counterexample). However, for a certain subclass of hypergraphs, we may prove a Gauge-Invariant Uniqueness Theorem in Section \ref{sec:gauge}. In Section \ref{sec:moves}, we study moves on hypergraphs. Moves on graphs play an important role in the classification of graph $C^*$-algebras as they preserve the $C^*$-algebra up to Morita equivalence. We define moves S, R, O and I on hypergraphs and we show that for a subclass of hypergraphs the first three moves yield some weakening of Morita equivalence, whereas the latter one yields isomorphic $C^*$-algebras. We end with a number of open questions  in Section \ref{Further_Research}.  In the appendix \ref{Appendix_A} and\ref{Appendix_Non_Amenable_hypergraphs}, we list further examples of hypergraph $C^*$-algebras, nuclear and non-nuclear ones.

\section*{\textbf{Acknowledgements}}

The definition of hypergraph $C^*$-algebras was a result from discussions between the second author and Simon Schmidt in 2018/2019. It was then conveyed to the third author within his Bachelor's thesis project \cite{zenner} in 2021 and this article contains some of its main results 
(mainly Sections 3.1--3.4 and 4.1). Subsequently, the first author extended the theory in her Master's project \cite{trieb} in  2022 (mainly Sections 3.5--3.7, 4.2, and 5--6). Finally, Bj\"orn Sch\"afer wrote his Master's thesis on nuclearity of hypergraph $C^*$-algebras \cite{schaeferthesis}; the main results can be found in \cite{Schaefer}. All theses were  written under the supervision of the second author, Moritz Weber. The PhD student Nicolas Faro\ss\; of Moritz Weber studied quantum symmetries of hypergraph $C^*$-algebras, see \cite{faross}.   MW has been supported by the SFB-TRR 195, the Heisenberg program of the DFG and a joint OPUS-LAP grant with Adam Skalski.

\section{\textbf{Preliminaries on graph $C^*$-algebras}}
\label{sec:headings}

\subsection{Universal C*-algebras}
Before speaking about graph $C^*$-algebras we need to introduce some notations on universal $C^*$-algebras. See \cite{isem24} for universal $C^*$-algebras and \cite{blackadar_operator, C_algebras_by_example} for the general theory of $C^*$-algebras. 

Let $E=\{x_i \ \vert \ i\in I\}$ be a set of elements indexed by some index set $I$. From these we can construct noncommutative monomials and noncommutative polynomials, by concatenation of letters from $E$. By adding another set $E^*=\{x^*_i \ \vert \ i\in I\}$ which is disjoint from $E$ and by defining an involution on $E\cup E^*$ we obtain the \emph{free *-algebra} $P(E)$ on the generator set $E$. We can view relations as a subset of polynomials $R\subset P(E)$. By taking the two sided *-ideal $J(R)\subset P(E)$ generated by $R$ we define the \emph{universal} *-\emph{algebra} $A(E/R):=P(E)/J(R)$ as a quotient space. Recall that a $C^*$-seminorm on a *-algebra $A$ is given by a map $p:A\rightarrow [0,\infty)$, such that 
\begin{enumerate} [(a)]
    \item $p(\lambda x)=\vert\lambda\vert p(x)$ and $p(x+y)\leq p(x)+p(y)$ for all $x,y\in A$ and $\lambda\in\mathbb{C}$,
    \item  $p(xy)\leq p(x)p(y)$ for all $x,y\in A$,
    \item  and $p(x^*x)=p(x)^2$ for all $x\in A$ 
\end{enumerate}
holds. We put
\begin{align*}
    \Vert x\Vert:=\sup\{p(x) \ \vert \ p \ \text{is a $C^*$-seminorm on} \ A(E\vert R) \} .
\end{align*}
and if $\Vert x\Vert<\infty$ for all $x\in A(E\vert R)$, then $\Vert\cdot\Vert$ is a $C^*$-seminorm. In that case, we define the universal $C^*$-algebra $C^*(E\vert R)$ as the completion with respect to the norm $\Vert\overset{\cdot}{x}\Vert:=\Vert x\Vert$, where $\overset{\cdot}{x}\in A(E\vert R)/\{x\in A(E\vert R) \ \vert \ \Vert x\Vert=0\}$ is the equivalence class of $x$.
Hence, we have
\begin{align*}
    C^*(E\vert R):=\overline{A(E\vert R)/\{x\in A(E\vert R) \ \vert \ \Vert x\Vert=0\}}^{\Vert\cdot\Vert}.
\end{align*}
This is a $C^*$-algebra, see for instance \cite{isem24}. By abuse of notation we write for the elements in $C^*(E\vert R)$ just $x\in C^*(E\vert R)$.
The following lemma  provides a useful tool for proving that a universal $C^*$-algebra actually exists, see \cite{isem24}.

\begin{lemma} 
\label{existenz}
Let $E=\{x_i \ \vert \ i\in I\}$ be a set of generators and $R\subset P(E)$ be relations.
If a constant $C$ exists such that $p(x_i)<C$ holds for all $i\in I$ and all $C^*$-seminorms $p$ on $A(E\vert R)$, then  $\Vert x\Vert<\infty$ holds for all $x\in A(E\vert R)$. In that case, we say that the universal $C^*$-algebra exists.
\end{lemma}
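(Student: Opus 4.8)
The plan is to reduce the statement to a uniform bound on the seminorm of a single monomial and then extend it to arbitrary elements by linearity. Recall that every element of $A(E\vert R)=P(E)/J(R)$ is represented by a noncommutative polynomial, that is, a finite $\C$-linear combination $x=\sum_m \lambda_m w_m$ of words $w_m$, where each $w_m$ is a (possibly empty) product of letters from $E\cup E^*$. Since the supremum defining $\Vert x\Vert$ ranges over \emph{all} $C^*$-seminorms $p$ on $A(E\vert R)$, the crucial point is to produce a bound on $p(x)$ that does not depend on the particular seminorm $p$; once such a bound exists, it is automatically an upper bound for $\Vert x\Vert$, and we are done.

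First I would record the auxiliary fact that any $C^*$-seminorm is $*$-invariant, i.e.\ $p(y^*)=p(y)$ for all $y$. This follows formally from the axioms: combining the $C^*$-identity (c) with submultiplicativity (b) gives $p(y)^2=p(y^*y)\le p(y^*)p(y)$, and applying the same inequality to $y^*$ yields the reverse estimate, so that $p(y^*)=p(y)$ in every case (the degenerate case $p(y)=0$ being handled by the same two inequalities). In particular $p(x_i^*)=p(x_i)<C$ for every generator, so that each single letter $\ell\in E\cup E^*$ satisfies $p(\ell)<C$.

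Next I would estimate a single word, assuming without loss of generality that $C\ge 1$ (enlarging $C$ preserves the hypothesis). If $w=\ell_1\ell_2\cdots\ell_k$ is a product of $k\ge 1$ letters from $E\cup E^*$, then iterating submultiplicativity (b) gives
\begin{align*}
    p(w)\le p(\ell_1)p(\ell_2)\cdots p(\ell_k)\le C^k,
\end{align*}
while the empty word (the unit, if present) satisfies $p(1)\le 1=C^0$, since $p(1)^2=p(1^*1)=p(1)$. Hence $p(w)\le C^{\deg(w)}$ for every word $w$. Applying the subadditivity and homogeneity of axiom (a) to the decomposition $x=\sum_m \lambda_m w_m$ then yields
\begin{align*}
    p(x)\le \sum_m \lvert\lambda_m\rvert\, p(w_m)\le \sum_m \lvert\lambda_m\rvert\, C^{\deg(w_m)},
\end{align*}
where $\deg(w_m)$ denotes the length of $w_m$. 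The right-hand side is a finite number depending only on a fixed polynomial representative of $x$ and on $C$, but not on $p$, so taking the supremum over all $C^*$-seminorms gives $\Vert x\Vert<\infty$, which is the assertion.

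I do not expect a genuine obstacle, as the argument is elementary. The two points deserving care are the $*$-invariance $p(y^*)=p(y)$, which must be extracted from axioms (b) and (c) rather than assumed, and the observation that the final estimate is uniform in $p$ — it is precisely this uniformity that allows the passage from a pointwise bound on each $p(x)$ to a bound on the supremum $\Vert\cdot\Vert$.
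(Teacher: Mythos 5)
Your proof is correct, and it is the standard argument: the paper itself gives no proof of this lemma, deferring instead to the cited lecture notes \cite{isem24}, where exactly this reasoning appears. Your two points of care --- deriving the $*$-invariance $p(y^*)=p(y)$ from axioms (b) and (c) including the degenerate case, and noting that the final bound $\sum_m \lvert\lambda_m\rvert\, C^{\deg(w_m)}$ is uniform in $p$ so that it survives the supremum --- are precisely the substance of the lemma, so there is nothing to add.
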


\begin{lemma}\label{existenz2}
If a universal $C^*$-algebra is generated only by projections and partial isometries, it exists.
\end{lemma}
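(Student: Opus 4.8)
The plan is to invoke Lemma \ref{existenz}: it suffices to exhibit a single constant $C$ such that $p(x_i) < C$ for every generator $x_i$ and every $C^*$-seminorm $p$ on $A(E\vert R)$. I claim that $C=2$ works, the point being that every projection and every partial isometry is forced to have seminorm at most $1$, uniformly over all seminorms.

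First I would treat a generator $x$ that is a projection, i.e. one satisfying the relations $x = x^*$ and $x^2 = x$ inside $A(E\vert R)$. Property (c) of a $C^*$-seminorm then gives $p(x)^2 = p(x^*x) = p(x^2) = p(x)$, so $p(x)$ is a nonnegative real number solving $t = t^2$; hence $p(x) \in \{0,1\}$ and in particular $p(x) \le 1 < C$.

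Next I would handle a generator $x$ that is a partial isometry, i.e. one satisfying $x x^* x = x$. The key preliminary step is to observe that $x^*x$ is then a projection at the purely algebraic level: it is self-adjoint, and $(x^*x)^2 = x^*(xx^*x) = x^*x$. Reusing the projection estimate just established, we get $p(x^*x) \le 1$, and property (c) yields $p(x)^2 = p(x^*x) \le 1$, so again $p(x) \le 1 < C$.

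Since by hypothesis these two cases exhaust all generators, the uniform bound $p(x_i) < 2$ holds for every $C^*$-seminorm, and Lemma \ref{existenz} immediately guarantees that the universal $C^*$-algebra exists. I do not expect a serious obstacle here; the only point that deserves a moment of care is the passage from the abstract relation $x x^* x = x$ to the statement that $x^*x$ genuinely behaves as a projection within $A(E\vert R)$, but the short computation above settles this and lets the projection bound be reused verbatim.
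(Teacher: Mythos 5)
Your proof is correct and follows essentially the same route as the paper: both invoke Lemma \ref{existenz} after bounding every generator by $1$ under any $C^*$-seminorm, using $p(x)^2=p(x^*x)$ for projections and then reducing partial isometries to the projection case via $x^*x$. The only (harmless) differences are that you derive algebraically that $x^*x$ is a projection from $xx^*x=x$ where the paper takes this as the definition, and that your choice $C=2$ is in fact slightly more careful than the paper's $C=1$, since Lemma \ref{existenz} asks for the strict inequality $p(x_i)<C$.
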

\begin{proof}
  If $x$ is a projection, i.e. $x=x^*=x^2$, then $p(x)^2=p(x^*x)=p(x)$ and hence $p(x)\leq 1$ for all $C^*$-seminorms $p$. If $x$ is a partial isometry, i.e. $x^*x$ is a projection, then $p(x)^2=p(x^*x)\leq 1$. Hence, we may choose $C=1$ in the previous lemma. 
\end{proof}
Note that a universal $C^*$-algebra may exist, but it could still be the case that it is trivial (i.e. zero). 
Universal $C^*$-algebras satisfy the so called universal property; this is a tool for obtaining *-homomorphisms between $C^*$-algebras: Let $E=\{x_i \ \vert \ i\in I\}$ be a set of generators and $R\subset P(E)$ be relations. We say that elements $\{y_i \ \vert \ i\in I\}$ in some *-algebra $B$ satisfy the relations $R$, if every polynomial $p\in R$ vanishes, when we replace $x_i$ with $y_i$. In that case,  there exists a unique *-homomorphism $\phi:C^*(E\vert R)\to B$, sending  $x_i$ to $y_i$ for all $i\in I$.

\subsection{Graph C*-algebras}
In this section we recall the definition of graph $C^*$-algebras. For more details see \cite{Raeburn}. 

\begin{definition}
    A \emph{directed graph} $\Gamma=(V,E,r,s)$ consists of two countable sets $V$, $E$ and functions $r:E\rightarrow V$, $s:E\rightarrow V$. The elements of $V$ are called \emph{vertices} and the elements of $E$ are called \emph{edges}. The map $r$ is named \emph{range} map and the map $s$ is named \emph{source} map. 
    We say that $v\in V$ is a \emph{sink} iff the set $s^{-1}(v)$ is empty and we call $v$ a \emph{source} iff $r^{-1}(v)$ is empty.
\end{definition}

Throughout this paper we always restrict to finite directed graphs unless specified otherwise.

\begin{definition}
 \label{graphalgebra}
 Let $\Gamma=(V,E,r,s)$ be a (finite) graph. The \emph{graph} 
 $C^*$-\emph{algebra} $C^*(\Gamma)$ of the graph $\Gamma$ is the universal $C^*$-algebra generated by mutually orthogonal projections $p_v$ for all $v\in V$ and partial isometries $s_e$ for all $e\in E$ such that the following relations hold:
 \begin{itemize}
     \item [(CK1)] $s_e^*s_f=\delta_{ef}p_{r(e)}$ for all $e,f\in E$. 
     \item [(CK2)] $p_v=\sum_{\substack{e\in E\\s(e)=v}}s_es_e^*$ for all $v\in V$, in case that $v$ is no sink.
 \end{itemize}
 The relations are called \emph{Cuntz-Krieger relations}\index{Cuntz-Krieger relations}. Elements $\{S_e, P_v\;|\; e \in E, v \in V\}$ in a $C^*$-algebra $A$ fulfilling the relations are called \emph{Cuntz-Krieger $\Gamma$-family}.
 \end{definition}
 Graph $C^*$-algebras exist (as universal $C^*$-algebras) due to Lemma \ref{existenz2}.

 \begin{remark}\label{sps}
 From Relation (CK1) it immediately follows that  $s_e=s_ep_{r(e)}$ and $s_e=p_{s(e)}s_e$ hold for all $e\in E$. 
 \end{remark}

\section{\textbf{Definition and examples of hypergraph $C^*$-algebras}}
\label{sec:others}
In this section, we associate a $C^*$-algebra to a finite hypergraph; later in this section, we will also give a definition for an infinite hypergraph. We study some first properties and we give some first examples. We establish their relation to graph $C^*$-algebras and to ultragraph $C^*$-algebras.

\subsection{Definition of hypergraph C*-algebras}
The main idea of hypergraphs is to extend the source and range map to power sets, such that edges can connect sets of vertices instead of just single vertices. We define directed hypergraphs as follows, cf also \cite{directedhyper}.

\begin{definition}
    A \emph{directed hypergraph} $H\Gamma=(V,E,r,s)$ consists of two countable sets $V$ and $E$ and two maps $r,s:E\rightarrow \mathcal{P}(V)\backslash\{\emptyset\}$, where $\mathcal P(V)$ denotes the powerset of $V$. The set $V$ contains vertices, while the set $E$ contains \emph{hyperedges}.
    We call a vertex $v \in V$ a \emph{source}\index{Source!Hypergraph} iff $v \notin r(e)$ for all $e \in E$ and we call it a \emph{sink}\index{Sink!Hypergraph} iff $v \notin s(e)$ for all $e \in E$.
\end{definition}

In this article, all hypergraphs are finite, i.e.  the set of vertices and edges are both finite, unless explicitly stated otherwise.

\begin{remark} \label{orderrelation}
    Recall that for two projections $p,q$ the relation $p\leq q$ holds if and only if $pq=p=qp$ holds. The finite sum of projections is a projection if and only if the projections are mutually orthogonal. Moreover, if $p_i\leq q$ for all $i\in I$ and $q\leq \sum p_i$ for a projection $q$ and finitely many mutually orthogonal projections $p_i$, we infer $q=\sum p_i$.
\end{remark}

\begin{definition}\label{Hypergraph_Def}
Let $H\Gamma=(V,E,r,s)$ be a (finite) hypergraph. The \emph{hypergraph $C^*$-algebra} $C^*(H\Gamma)$ is the universal $C^*$-algebra generated by mutually orthogonal projections $p_v$ for all $v\in V$ and partial isometries $s_e$ for all $e\in E$ such that the following relations hold:
\begin{itemize}
    \item [(HR1)] $s_e^*s_f=\delta_{ef}\sum_{v\in r(e)}p_v$ for all $e,f\in E$.
    \item [(HR2a)] $s_es_e^*\leq\sum_{v\in s(e)}p_v$ for all $e\in E$.
    \item [(HR2b)] $p_v \leq\sum_{\substack{e\in E\\v\in s(e)}}s_es_e^*$ for all $v \in V$, where $v$ is no sink.
\end{itemize}
We call these relations the \emph{hypergraph relations}. Elements $\{S_e, P_v\}$ in a $C^*$-algebra $A$ fulfilling the hypergraph relations are called \emph{Cuntz-Krieger $H\Gamma$-family}.
\end{definition}
Hypergraph $C^*$-algebras exist (as universal  $C^*$-algebra) by Lemma \ref{existenz2}.



\subsection{Hypergraph $C^*$-algebras generalize graph $C^*$-algebras}

Note that every graph $\Gamma=(V,E,r,s)$ is also a hypergraph $H\Gamma=(V,E,r',s')$ by defining $r':E\rightarrow\mathcal{P}(V), e\mapsto \{r(e)\}$ and $s':E\rightarrow\mathcal{P}(V),e\mapsto\{s(e)\}$. In other words, graphs are hypergraphs with the restriction $\lvert s(e)\rvert=\lvert r(e)\rvert=1$ for all edges $e\in E$.
We show that every graph $C^*$-algebra is also a hypergraph $C^*$-algebra. 

\begin{proposition}
\label{nontrivial}
Consider a graph $\Gamma=(V,E,r,s)$ and interpret it as a hypergraph $H\Gamma=(V,E,r',s')$ in the sense as described before. For our graph $C^*$-algebra we write 
\begin{align*}
    C^*(\Gamma)=C^*(\tilde{s}_e, \ e\in E ; \tilde{p}_v, \ v\in V \ \vert \ \tilde{p}_v\tilde{p}_w=0, v\not=w;  \tilde{s}_e^*\tilde{s}_f=\delta_{ef}\tilde{p}_{r(e)} ; \sum_{\substack{e\in E \\s(e)=w}}\tilde{s}_e\tilde{s}_e^*=\tilde{p}_w)
\end{align*}
where $\tilde{s}_e$ is a partial isometry for all $e\in E$ and $\tilde{p}_v$ is a projection for all $v\in V$. Then we have $C^*(\Gamma)\cong C^*(H\Gamma)$. 
\begin{proof}
First, we check that the generators of $C^*(\Gamma)$ fulfill the relations of $C^*(H\Gamma)$. Since the only element in the set $r'(e)$ is the vertex $r(e)$ we have
\begin{align*}
    \tilde{s}_e^*\tilde{s}_f=\delta_{ef}\tilde{p}_{r(e)}=\delta_{ef}\sum_{\substack{v\in V\\v\in r'(e)}}\tilde{p}_v.
    \end{align*}
    We see that Relation (HR1) is fulfilled. For the same reasons it follows for $v\in V$ with $s^{-1}(v)\not=\emptyset$ that
    \begin{align*}
 \tilde{p}_v =\sum_{\substack{e\in E\\  v=s(e)}}\tilde{s}_e\tilde{s}_e^* =\sum_{\substack{e\in E\\  v\in s'(e)}}\tilde{s}_e\tilde{s}_e^*.
\end{align*}
Hence, Relation (HR2b) is satisfied, while (HR2a) follows from Remark \ref{sps}.

Conversely,  the generators of $C^*(H\Gamma)$ satisfy the Relations (CK1) and (CK2) of $C^*(\Gamma)$: Using the same argument as in the above direction, we have\begin{align*}
    s_e^*s_f=\delta_{ef}\sum_{\substack{v\in V\\ v\in r'(e)}}p_v=\delta_{ef}p_{r(e)} .
\end{align*}
We see that Relation (CK1) is satisfied. To show that (CK2) is fulfilled we need Relations (HR2a) and (HR2b). Let $v\in V$ with $s^{-1}(v)\not=\emptyset$ and hence there exists at least one $f\in E$ with $s(f)=v$. With (HR2a) it follows
\begin{align*}
    s_es_e^*\leq\sum_{\substack{v\in V \\ v\in s'(e)}} p_v=p_{s(e)}
\end{align*}
and using Relation (HR2b) we have
\begin{align*}
  p_v\leq \sum_{\substack{e\in E\\  v\in s'(e)}} s_es_e^*= \sum_{\substack{e\in E\\  v= s(e)}} s_es_e^*.
\end{align*}
By Remark \ref{orderrelation} we conclude
\begin{align*}
    p_v
    =\sum_{\substack{e\in E\\  v=s(e)}}s_es_e^*.
\end{align*}
By the universal properties of $C^*(\Gamma)$ and $C^*(H\Gamma)$ we find an isomorphism mapping $\tilde s_e$ to $s_e$ and $\tilde p_v$ to $p_v$.
\end{proof}
\end{proposition}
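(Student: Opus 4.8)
The plan is to build mutually inverse $*$-homomorphisms between $C^*(\Gamma)$ and $C^*(H\Gamma)$ via the universal property: I would check that each generating family satisfies the defining relations of the other algebra, obtain the two induced $*$-homomorphisms sending generators to generators, and observe that their composites fix generators and are therefore the respective identities. The simplification that makes both directions tractable is that $r'(e)=\{r(e)\}$ and $s'(e)=\{s(e)\}$ are singletons, so every sum $\sum_{v\in r'(e)}$ and $\sum_{v\in s'(e)}$ occurring in (HR1), (HR2a) and (HR2b) reduces to a single term. Consequently only one point requires genuine argument, namely recovering the sharp equality (CK2) of the graph algebra from the pair of inequalities (HR2a) and (HR2b).

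For the direction $C^*(\Gamma)\to C^*(H\Gamma)$, I would verify the hypergraph relations for the Cuntz-Krieger generators $\tilde p_v,\tilde s_e$. Relations (HR1) and (HR2b) are immediate once the singleton sums collapse: (HR1) becomes $\tilde s_e^*\tilde s_f=\delta_{ef}\tilde p_{r(e)}$, i.e.\ (CK1), and (HR2b) becomes the inequality version of (CK2), which holds with equality for non-sinks. For (HR2a) I would invoke Remark \ref{sps}, which gives $\tilde s_e=\tilde p_{s(e)}\tilde s_e$ and hence $\tilde p_{s(e)}\,\tilde s_e\tilde s_e^*=\tilde s_e\tilde s_e^*$, so the projection $\tilde s_e\tilde s_e^*$ is dominated by $\tilde p_{s(e)}$.

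For the converse $C^*(H\Gamma)\to C^*(\Gamma)$, relation (CK1) again drops out of (HR1). The crux is (CK2): fixing a non-sink $v$, Relation (HR2a) yields $s_es_e^*\le p_{s(e)}=p_v$ for each $e$ with $s(e)=v$, while (HR2b) yields $p_v\le\sum_{s(e)=v}s_es_e^*$. The range projections are mutually orthogonal, since for $e\ne f$ Relation (HR1) gives $s_e^*s_f=0$ and therefore $(s_es_e^*)(s_fs_f^*)=s_e(s_e^*s_f)s_f^*=0$. This places us exactly in the hypothesis of Remark \ref{orderrelation}, which forces $p_v=\sum_{s(e)=v}s_es_e^*$, i.e.\ (CK2). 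The main obstacle is precisely this asymmetry between an equality relation on one side and two inequalities on the other; Remark \ref{orderrelation} together with the orthogonality supplied by (HR1) is the tool that closes the gap, after which the universal property delivers the desired isomorphism.
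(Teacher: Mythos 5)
Your proposal is correct and follows essentially the same route as the paper's proof: both directions are handled via the universal property, the singleton sums collapse (HR1)/(HR2a)/(HR2b) onto (CK1)/(CK2), and the sharp equality (CK2) is recovered from the two inequalities using Remark \ref{orderrelation}. In fact you are slightly more careful than the paper at the one delicate point, since you explicitly verify the mutual orthogonality of the range projections $s_es_e^*$ (via $s_e^*s_f=0$ from (HR1)) that Remark \ref{orderrelation} requires, whereas the paper invokes the remark without spelling this hypothesis out.
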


\subsection{Hypergraph $C^*$-algebras are unital}

The next statement is known for graph $C^*$-algebras (associated to finite graphs), but it is still true for the generalization to hypergraph $C^*$-algebras. Recall that all our hypergraphs are finite.

\begin{proposition}
\label{unit}
For every hypergraph $H\Gamma=(V,E,r,s)$ and hypergraph C*-algebra $C^*(H\Gamma)$ we have that $\sum_{v\in V} p_v$ is the unit element in $C^*(H\Gamma)$ and therefore, $\sum_{v\in V} p_v=1$.

\begin{proof} 
Using Relation (HR1), we have 
\begin{align*}
    s_e\sum_{v\in V}p_v&=s_es_e^*s_e\sum_{v\in V}p_v\\
    &=s_e\sum_{v\in r(e)}p_v\sum_{v\in V}p_v\\
    &=s_e\sum_{v\in r(e)}p_v\\
    &=s_e  .
    \end{align*}
    Using the same trick as in the argument before but with Relation (HR2a), we also have $(\sum_{v\in  V}p_v)s_e=s_e$.
    Notice that we have $\sum_{v\in V}p_vp_w=p_w=p_w\sum_{v\in V}p_v$ for all $w\in V$ and $(\sum_{v\in V}p_v)^2=\sum_{v\in V}p_v=(\sum_{v\in V}p_v)^*$.
    We conclude that $\sum_{v\in V}p_v$ is the unit element in $C^*(H\Gamma)$.
    \end{proof}
\end{proposition}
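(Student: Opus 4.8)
The plan is to set $P := \sum_{v\in V} p_v$ and to show that $P$ is a projection which acts as a two-sided identity on each generator; once $Px = x = xP$ is known for every $p_v$ and every $s_e$, the set of all $x$ with this property is a norm-closed $*$-subalgebra containing the generators, hence is all of $C^*(H\Gamma)$, and $P$ is therefore the (necessarily unique) unit. Two cheap preliminary observations feed into this: finiteness of $V$ makes $P$ an honest element of the algebra, and since the $p_v$ are mutually orthogonal, Remark \ref{orderrelation} gives $P^2 = P = P^*$, so $P$ is indeed a projection.

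First I would dispose of the projection generators: for any $w\in V$, mutual orthogonality yields $Pp_w = p_w = p_w P$ immediately. The work is then to treat the partial isometries $s_e$, and for this I would use the factorisation $s_e = s_e s_e^* s_e$ together with the two sides of the hypergraph relations handled separately. For the right action $s_e P = s_e$ I would invoke (HR1) in the form $s_e^* s_e = \sum_{v\in r(e)} p_v$, so that $s_e\big(\sum_{v\in r(e)} p_v\big) = s_e$; multiplying on the right by $P$ and using $\big(\sum_{v\in r(e)} p_v\big) P = \sum_{v\in r(e)} p_v$ (each range projection is one of the orthogonal summands of $P$) returns $s_e P = s_e\big(\sum_{v\in r(e)} p_v\big) = s_e$.

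The left action $P s_e = s_e$ is the step I expect to be the only genuine obstacle, because it is the single place where an inequality rather than an equality must be exploited. Here I would run the source relation (HR2a), namely $s_e s_e^* \le \sum_{v\in s(e)} p_v$, through Remark \ref{orderrelation} to convert it into the identity $\big(\sum_{v\in s(e)} p_v\big)\, s_e s_e^* = s_e s_e^*$; combined with $P\big(\sum_{v\in s(e)} p_v\big) = \sum_{v\in s(e)} p_v$ this gives $P s_e s_e^* = s_e s_e^*$, whence $P s_e = P s_e s_e^* s_e = s_e s_e^* s_e = s_e$. It is worth noting in passing that this argument never touches (HR2b).

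Finally I would assemble these identities to conclude that $P$ is a two-sided unit on the generating set, promote this to all of $C^*(H\Gamma)$ by the closure argument above, and invoke uniqueness of the unit in a $C^*$-algebra to write $\sum_{v\in V} p_v = 1$. Apart from the very last closure step the entire argument is purely algebraic, so the only real subtlety is the conversion of the order relation (HR2a) into a usable equation, which is exactly where Remark \ref{orderrelation} does the heavy lifting.
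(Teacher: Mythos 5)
Your proposal is correct and follows essentially the same route as the paper: both factor $s_e = s_es_e^*s_e$, use (HR1) for the range side, convert the inequality (HR2a) into an equality via Remark \ref{orderrelation} for the source side, and handle the $p_w$ by mutual orthogonality, without ever needing (HR2b). The only difference is that you spell out the final closure argument (the set of elements on which $P$ acts as a two-sided identity is a closed $*$-subalgebra) that the paper leaves implicit.
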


Next, we show that the analogous of Remark \ref{sps} holds true for hypergraph $C^*$-algebras. 

\begin{proposition}\label{source_range_projections}
    Let $H\Gamma=(V,E, r, s)$ be a  hypergraph. For each Cuntz-Krieger $H\Gamma$-family $\{p_v, s_e \}$ it holds 
    \begin{equation*} \label{generalrelations}
        \left(\sum_{v\in s(e)} p_v\right) s_e= s_e = s_e\left(\sum_{v\in r(e)} p_v\right).
    \end{equation*}
    In particular, $s_es_f^*=0$, if $r(e)\cap r(f)=\emptyset$.
\end{proposition}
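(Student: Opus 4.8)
The plan is to establish the two equalities separately---using (HR1) for the range side and (HR2a) for the source side---and then to deduce the orthogonality statement by inserting range projections on both factors.

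First I would prove the right-hand equality $s_e = s_e\left(\sum_{v\in r(e)} p_v\right)$. Relation (HR1) with $e=f$ gives $s_e^*s_e = \sum_{v\in r(e)} p_v$. Since $s_e$ is a partial isometry we have $s_e s_e^* s_e = s_e$, so that $s_e\left(\sum_{v\in r(e)} p_v\right) = s_e s_e^* s_e = s_e$, as claimed.

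For the left-hand equality $\left(\sum_{v\in s(e)} p_v\right) s_e = s_e$, I would set $q := \sum_{v\in s(e)} p_v$, which is a projection since the $p_v$ are mutually orthogonal. Relation (HR2a) states $s_e s_e^* \leq q$, which by Remark \ref{orderrelation} means precisely $q\,(s_e s_e^*) = s_e s_e^*$. Right-multiplying by $s_e$ and again using $s_e s_e^* s_e = s_e$ then yields $q s_e = s_e$. The only point needing a little care is this translation of the inequality (HR2a) into an identity via the order characterization, together with the collapse $s_e s_e^* s_e = s_e$; otherwise the argument is entirely routine.

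Finally, for the orthogonality claim I would apply the range identity to both factors: taking adjoints of $s_f = s_f\left(\sum_{w\in r(f)} p_w\right)$ gives $s_f^* = \left(\sum_{w\in r(f)} p_w\right) s_f^*$, whence
\[
s_e s_f^* = s_e\left(\sum_{v\in r(e)} p_v\right)\left(\sum_{w\in r(f)} p_w\right) s_f^*.
\]
Mutual orthogonality of the $p_v$ gives $\left(\sum_{v\in r(e)} p_v\right)\left(\sum_{w\in r(f)} p_w\right) = \sum_{v\in r(e)\cap r(f)} p_v$, which is the empty sum $0$ when $r(e)\cap r(f)=\emptyset$. Therefore $s_e s_f^* = 0$.
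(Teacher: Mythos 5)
Your proof is correct and follows essentially the same route as the paper: both use $s_e s_e^* s_e = s_e$ together with (HR1) for the range identity and (HR2a), read via the order characterization of Remark \ref{orderrelation}, for the source identity. You additionally spell out the ``in particular'' orthogonality claim (which the paper leaves implicit) by inserting the range projections and invoking mutual orthogonality of the $p_v$; this is a correct and natural completion of the same argument.
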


\begin{proof}
By (HR2a), we have 
    \begin{equation*}
        \left(\sum_{v\in s(e)} p_v\right)s_e= \left(\sum_{v\in s(e)} p_v\right)s_es_e^*s_e=s_es_e^*s_e=s_e
    \end{equation*}
and by (HR1), we have
    \begin{equation*}
        s_e\left(\sum_{v\in r(e)} p_v\right)=s_es_e^*s_e=s_e.
    \end{equation*}
\end{proof}

\subsection{First examples} 

In this section, we list a number of first examples. Further, more interesting ones may be found in Section 4.
We can express the Toeplitz algebra as a hypergraph $C^*$-algebra. We can even consider two different hypergraphs as we see in the following example.

\begin{proposition}\label{toeplitz}
Consider the hypergraphs $H\Gamma_1$ given by $V_1=\{v,w\}$, $E_1=\{e\}$ and $s(e)=\{w\}$, $r(e)=\{v,w\}$, as well as $H\Gamma_2$ given by $V_2=\{v,w\}$, $E_2=\{e\}$ and $s(e)=\{v,w\}$, $r(e)=\{w\}$, depicted in Fig. \ref{fig:my_label}.
Both associated $C^*$-algebras are isomorphic to the Toeplitz algebra $\mathcal{T}$.

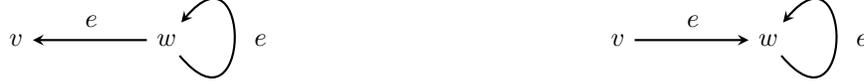
\begin{figure} [!ht]
    \centering
        \begin{center}
        \begin{minipage}{\linewidth}
        \centering
            \begin{tikzpicture}[ > = stealth, auto, thick]
            \node (v1) at (0,0) {$v$};
            \node (v2) at (2,0) {$w$};
            \path[->] (v2) edge[] (v1);
            \node (e1) at (1,0.25) {$e$};
            \path[->,in=50,out=-50,loop,scale=3] (v2) edge (v2);
            \node (e3) at (3.25,0) {$e$};
        
            \node (v1) at (8,0) {$v$};
            \node (v2) at (10,0) {$w$};
            \path[->] (v1) edge[] (v2);
            \node (e1) at (9,0.25) {$e$};
            \path[->,in=50,out=-50,loop,scale=3] (v2) edge (v2);
            \node (e3) at (11.25,0) {$e$};
            \end{tikzpicture}
        \end{minipage}
    \end{center}
    \caption{Hypergraphs generating the Toeplitz algebra.}
    \label{fig:my_label}
\end{figure}
    
\end{proposition}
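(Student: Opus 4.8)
The plan is to show that in each case the hypergraph relations force the single partial isometry $s_e$ to be an isometry (respectively a co-isometry), with the two vertex projections expressible through $s_e$; the universal property then identifies the algebra with the Toeplitz algebra, viewed as $\mathcal{T}=C^*(S\vert S^*S=1)$.

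First I would treat $H\Gamma_1$. By Proposition \ref{unit} we have $p_v+p_w=1$, and since $r(e)=\{v,w\}$, relation (HR1) gives $s_e^*s_e=p_v+p_w=1$, so that $s_e$ is an isometry. Next, (HR2a) yields $s_es_e^*\leq p_w$ while (HR2b) (note that $w$ is not a sink) yields $p_w\leq s_es_e^*$, so Remark \ref{orderrelation} gives $s_es_e^*=p_w$; consequently $p_w=s_es_e^*$ and $p_v=1-s_es_e^*$, whence $C^*(H\Gamma_1)$ is generated by the single isometry $s_e$. To conclude I would set up two $*$-homomorphisms via the respective universal properties: sending the generating isometry $S$ of $\mathcal{T}$ to $s_e$ gives $\psi\colon\mathcal{T}\to C^*(H\Gamma_1)$ (legitimate since $s_e^*s_e=1$), while substituting $S_e:=S$, $P_w:=SS^*$, $P_v:=1-SS^*$ into $\mathcal{T}$ is readily checked to satisfy (HR1), (HR2a), (HR2b) together with the mutual orthogonality of the vertex projections, giving $\phi\colon C^*(H\Gamma_1)\to\mathcal{T}$. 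Evaluating the compositions on generators shows that $\phi\circ\psi$ and $\psi\circ\phi$ are the respective identities.

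The case $H\Gamma_2$ is dual. Again $p_v+p_w=1$, but now $r(e)=\{w\}$, so (HR1) gives $s_e^*s_e=p_w$; moreover $s(e)=\{v,w\}$ means neither vertex is a sink, so (HR2b) gives both $p_v\leq s_es_e^*$ and $p_w\leq s_es_e^*$. Since $p_v$ and $p_w$ are orthogonal, their sum lies below $s_es_e^*$, that is $1=p_v+p_w\leq s_es_e^*\leq 1$, forcing $s_es_e^*=1$; hence $s_e^*$ is an isometry, with $p_w=s_e^*s_e$ and $p_v=1-s_e^*s_e$. I would then run the same universal-property argument, now matching the generating isometry $S$ of $\mathcal{T}$ with $s_e^*$.

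The computations are routine; the only point requiring care is the bookkeeping of the universal-property maps, namely checking that the elements substituted into $\mathcal{T}$ genuinely satisfy all hypergraph relations and that the two homomorphisms compose to the identity on generators (which simultaneously rules out any collapse of the algebra). I expect the mild subtlety in $H\Gamma_2$ to be the step that orthogonal projections below a common projection have their sum below it, used to upgrade the two (HR2b) inequalities to $s_es_e^*=1$.
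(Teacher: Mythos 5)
Your proposal is correct and follows essentially the same route as the paper: in each case the hypergraph relations together with $p_v+p_w=1$ (Proposition \ref{unit}) force $s_e$ (respectively $s_e^*$) to be an isometry with $s_es_e^*=p_w$ (respectively $s_es_e^*=1$, $s_e^*s_e=p_w$), and then the universal properties of $\mathcal{T}$ and of the hypergraph $C^*$-algebra yield mutually inverse $*$-homomorphisms. Your slightly more explicit handling of $H\Gamma_2$ (summing the two (HR2b) inequalities for the orthogonal projections $p_v,p_w$ to get $1\leq s_es_e^*$) is exactly the detail the paper compresses into its one-line remark that $s_es_e^*=p_v+p_w=1$.
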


\begin{proof}
    For the first hypergraph, $H\Gamma_1$, we get by the hypergraph relations
        \begin{align*}
            s_e^*s_e=p_v+p_w, \qquad s_es_e^*=p_w \textnormal{ (as }s_es_e^*\leq p_w\leq s_es_e^*).
        \end{align*}
    Since $p_v+p_w=1$ by Lemma \ref{unit}, $s_e$ is an isometry. By the universal property of the Toeplitz algebra, which we view as the universal $C^*$-algebra generated by an isometry $u$, we get a *-homomorphism $\phi:\mathcal{T}\rightarrow C^*(H\Gamma_1)$ defined by 
        \begin{align*}
            u&\mapsto s_e, \quad 
            1 \mapsto p_v+p_w=s_e^*s_e.
        \end{align*}
    On the other hand we can define a Cuntz-Krieger $H\Gamma_1$-family in $\mathcal{T}$ as follows
        \begin{align*}
            S_e:=u,\qquad
            P_w:=uu^*,\qquad
            P_v:=1-uu^*.
        \end{align*}
    The universal property then yields a *-homomorphism $\psi:C^*(H\Gamma_1)\rightarrow \mathcal{T}$ defined by 
        \begin{align*}
            s_e\mapsto S_e, \quad
            p_w \mapsto P_w,\quad
            p_v \mapsto P_v.
        \end{align*}
        We conclude that $\phi \circ \psi=\id_{C^*(H\Gamma_1)}$ and $\psi \circ \phi =\id_{\mathcal{T}}$. Thus we get the required isomorphism.
    
    The proof for the second hypergraph follows similarly, using that $s_e^*$ is an isometry as  $s_es_e^*=p_v+p_w=1$.
\end{proof}

Another basic example of a hypergraph $C^*$-algebra is the Cuntz algebra $\mathcal O_n$ \cite{Cuntz}.

\begin{proposition}
    \label{hypercuntz}
Let $n\in\mathbb{N}$ and $n\geq 2$. Consider the hypergraph $H\Gamma$ with vertices $V=\{v_1,...,v_n\}$, edges $E=\{e_1,...,e_n\}$ and $r(e_i)=\{v_1,...,v_n\}, \ s(e_i)=\{v_i\}$ for all $i=1,...,n$. Then $C^*(H\Gamma)\cong \mathcal{O}_n$.
\end{proposition}
\begin{proof}
    By the relations of the hypergraph $C^*$-algebra $C^*(H\Gamma)$ we have
\begin{align*}
     &s_{e_i}^*s_{e_j}=\delta_{ij}\sum_{j=1}^n p_{v_j}=1 \ \text{for all} \ i,j=1,...,n, \text{ using Lemma \ref{unit}},\\
     &\text{and } s_{e_i}s_{e_i}^*\leq p_{v_i}\leq s_{e_i}s_{e_i}^*.\\
     \end{align*}
     Hence, the isometries $s_{e_i}$ satisfy $\sum_{i=1}^ns_{e_i}s_{e_i}^*=\sum_{i=1}^n p_{v_i}=1$
     and we obtain $\phi:\mathcal O_n\to C^*(H\Gamma)$ by the universal property.
     We obtain an inverse *-homomorphism, since the generators $S_{e_i}:=S_i\in\mathcal O_n$ and the projections $P_v:=S_iS_i^*$ satisfy the relations of $C^*(H\Gamma)$. 
\end{proof}

\subsection{Paths in hypergraph $C^*$-algebras}

Graph $C^*$-algebras are spanned by words in paths. This cannot be generalized directly, as paths in hypergraphs are of different quality -- it is a priori  unclear how to define them and we give various definitions.

\begin{definition}
    Let $\mu=(\mu_1...\mu_n)$ be a tuple of edges in $H\Gamma$. Then we call $\mu$
    \begin{enumerate}
        \item a \emph{perfect path}\index{Path!Perfect}, if $s(\mu_{j+1})=r(\mu_j)$ for all $j\in \{1,...,n\}$;
        \item a \emph{quasi perfect path}\index{Path!Quasi perfect}, if $s(\mu_{j+1})\subseteq r(\mu_j)$ for all $j\in \{1,...,n\}$;
        \item a \emph{partial path}\index{Path!Partial}, if $s(\mu_{j+1})\cap r(\mu_j)\neq \emptyset$ for all $j\in \{1,...,n\}$.
    \end{enumerate}
     We define $s_{\mu}:=s_{\mu_1}...s_{\mu_n}$ and $s_v:=p_v$.
\end{definition}

Restricting to a particular nice class of paths, we may deduce a result in analogy to graph $C^*$-algebras.

\begin{proposition}\label{span_hypergraph_algebra}
    Let $H\Gamma=(V, E, r, s)$ be a  hypergraph with only perfect and quasi perfect paths and a Cuntz-Krieger $H\Gamma$-family $\{p_v, s_e\}$. It holds: 
    \begin{equation*}
        C^*(H\Gamma)=\overline{span}\left\{s_{\mu}s_{\nu}^* \;|\; 
        \mu=(\mu_1...\mu_k), \nu=(\nu_1...\nu_m), \mu_i,\nu_j\in V\cup E \text{ for all } i,j, \text{ and } r(\mu)\cap r(\nu)\neq \emptyset\right\}.
    \end{equation*}
\end{proposition}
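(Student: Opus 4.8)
The plan is to show that the set $X := \overline{\mathrm{span}}\{s_\mu s_\nu^* : r(\mu)\cap r(\nu)\neq\emptyset\}$ (with the indexing conventions in the statement) is a dense $*$-subalgebra of $C^*(H\Gamma)$. Since $X$ is by construction a closed linear subspace containing all generators (taking $\mu$ or $\nu$ to be a single vertex recovers the $p_v$, and taking $\mu=(e)$, $\nu$ empty-ish via the vertex convention recovers the $s_e$), and since $C^*(H\Gamma)$ is by definition generated by the $p_v$ and $s_e$, it suffices to prove that $X$ is closed under multiplication and under the adjoint. Closure under adjoint is immediate: $(s_\mu s_\nu^*)^* = s_\nu s_\mu^*$, and the range-intersection condition $r(\mu)\cap r(\nu)\neq\emptyset$ is symmetric in $\mu$ and $\nu$. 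So the entire content is in proving that $X\cdot X\subseteq X$.

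Let me think about the product of two spanning elements. A product $(s_\mu s_\nu^*)(s_\alpha s_\beta^*)$ hinges on simplifying the middle term $s_\nu^* s_\alpha$. **The key computation** is to understand $s_{\nu_m}^* \cdots s_{\nu_1}^* s_{\alpha_1}\cdots s_{\alpha_k}$, and the crucial innermost piece is $s_{\nu_1}^* s_{\alpha_1}$. By relation (HR1), if $\nu_1,\alpha_1$ are both edges then $s_{\nu_1}^* s_{\alpha_1} = \delta_{\nu_1\alpha_1}\sum_{v\in r(\nu_1)}p_v$, which either kills the product or collapses a matched edge pair into a sum of projections. I would set this up as an induction on the combined length $m+k$ of the two inner words $\nu$ and $\alpha$: the base case where one of them is a single vertex is handled by Proposition \ref{source_range_projections} (the projection $\sum_{v\in r(\mu)}p_v$ can be absorbed into $s_\mu$, or passed through), and the inductive step cancels a matched leading edge pair using (HR1), replacing $s_{\nu_1}^* s_{\alpha_1}$ by a projection $\sum_{v\in r(\alpha_1)}p_v$ that then gets absorbed against the next letter by Proposition \ref{source_range_projections}, shortening both words. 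At each stage a $\delta$ forces $\nu_1=\alpha_1$ or the term vanishes, so after cancellation one is left with a single surviving word of the form $s_{\mu'}s_{\beta'}^*$ (possibly with a leftover projection factor) together with a range-overlap constraint.

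The role of the \emph{perfect/quasi-perfect} hypothesis, and why it is needed, is the following: after cancelling the matched interior, the leftover tail $s_{\nu_{j+1}}^*\cdots$ or $s_{\alpha_{j+1}}\cdots$ must actually recombine into a legitimate new path word whose range projections survive. For a quasi-perfect path one has $s(\mu_{i+1})\subseteq r(\mu_i)$, so that $\bigl(\sum_{v\in r(\mu_i)}p_v\bigr)s_{\mu_{i+1}} = s_{\mu_{i+1}}$ by (the source half of) Proposition \ref{source_range_projections} — the range projection of one edge dominates the source projection of the next, so consecutive edges compose without producing spurious cross terms. This is exactly what guarantees that the projection $\sum_{v\in r(\nu_1)}p_v$ produced by (HR1) gets cleanly absorbed rather than spawning a sum of products lying outside the intended spanning set. **The main obstacle** is precisely this bookkeeping: ensuring that the resulting word again satisfies the range-intersection condition $r(\cdot)\cap r(\cdot)\neq\emptyset$ (otherwise the term is $0$ by the last clause of Proposition \ref{source_range_projections}, which is consistent, since $0\in X$), and ruling out, via the quasi-perfect assumption, the appearance of terms that do not collapse to a single basis word.

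Concretely, **I would carry out the steps in this order.** First I record that $X$ is $*$-closed. Second I prove the interior-cancellation lemma $s_\nu^* s_\alpha$ by induction on $m+k$, the key inputs being (HR1) for matched leading edges and Proposition \ref{source_range_projections} for absorbing the resulting range projections into adjacent path letters, using quasi-perfectness to guarantee absorption. Third I assemble the product $(s_\mu s_\nu^*)(s_\alpha s_\beta^*) = s_\mu (s_\nu^* s_\alpha) s_\beta^*$, substitute the cancellation result, and read off that it is a scalar multiple of some $s_{\mu''} s_{\beta''}^*$ (or $0$), checking the range-overlap condition holds or else the term vanishes. Fourth I conclude that $X$ is a $C^*$-subalgebra containing the generators, hence $X = C^*(H\Gamma)$.
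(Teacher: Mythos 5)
Your proposal is correct and follows essentially the same route as the paper: the paper also shows the spanning set is closed under adjoints and products, performing the length-two computation $s_\nu^* s_\mu = \delta_{\nu_1\mu_1}\delta_{\nu_2\mu_2}\left(\sum_{v\in r(\mu_2)}p_v\right)$ via (HR1) and the absorption identity $\left(\sum_{v\in r(\mu_1)}p_v\right)s_{\mu_2}=s_{\mu_2}$ from quasi-perfectness and Proposition \ref{source_range_projections}, and then invoking ``an iteration'' to obtain the general product formula. Your induction on the combined word length is precisely that iteration made explicit, so the two arguments coincide in substance.
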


\begin{proof}
Let $\mu=(\mu_1,\mu_2)$ be a quasi perfect path. Then 
$\left(\sum_{v\in r(\mu_1)}p_v\right)\geq\left(\sum_{v\in s(\mu_2)}p_v\right)$ and we have
\[\left(\sum_{v\in r(\mu_1)}p_v\right)s_{\mu_2}=
\left(\sum_{v\in r(\mu_1)}p_v\right)\left(\sum_{v\in s(\mu_2)}p_v\right)s_{\mu_2}=s_{\mu_2}
\]
by Proposition \ref{source_range_projections}. Hence, if $\nu=(\nu_1,\nu_2)$ is another quasi perfect path, then 
\[s_{\nu}^*s_\mu=s_{\nu_2}^*s_{\nu_1}^*s_{\mu_1}s_{\mu_2}=\delta_{\nu_1\mu_1}s_{\nu_2}^*\left(\sum_{v\in r(\mu_1)}p_v\right)s_{\mu_2}=\delta_{\nu_1\mu_1}s_{\nu_2}^*s_{\mu_2}=\delta_{\nu_1\mu_1}\delta_{\nu_2\mu_2}\left(\sum_{v\in r(\mu_1)}p_v\right).\]
An iteration  yields for quasi perfect paths $\mu, \nu, \alpha, \beta$:  
        \begin{align*}
            (s_\mu s_\nu^*)(s_\alpha s_\beta^*)=\begin{cases} s_{\mu\alpha'}s_\beta^* & \alpha = \nu\alpha'\\
            s_\mu s_{ \beta\nu'}^* & \nu=\alpha \nu'\\
            \sum_{v\in I}s_{\mu\nu} s_\beta^* & \nu =\alpha, \text{for some index set $I$} \\
            0 & \text{else}.\end{cases}
        \end{align*}
\end{proof}

\begin{remark}
    For a (quasi) perfect path $\mu = (\mu_1,\ldots, \mu_n)$ the element $s_\mu$ is always a partial isometry. This is not the case if we deal with partial paths: The projections onto vertices then no longer disappear in chains of partial isometries. 
\end{remark}

\subsection{(Infinite) hypergraph $C^*$-algebras generalize ultragraph C*-algebras}

Ultragraph C*-algebras, first defined by \cite{Tomforde}, form a subclass of hypergraph C*-algebras, as we will show in this subsection. The main intention of Tomforde's construction was to find a unified approach to graph $C^*$-algebras and Exel-Laca algebras generalizing Cuntz-Krieger algebras to the infinite setting. 
We adapt the definition in order to define hypergraph $C^*$-algebras for infinite hypergraphs.

\begin{definition}
    A \emph{(directed) ultragraph} $U\Gamma=(V,E,r,s)$ is defined by a countable set of vertices $V$  and a countable set $E$ of edges together with a source map $s:E\to V$ and a range map $r:E\to\mathcal P(V)\backslash\{\emptyset\}$.
\end{definition}

Hence, an ultragraph is a (not necessarily finite) hypergraph with the condition $\lvert s(e)\rvert=1$ for all $e\in E$.
To clarify the differences between graphs, ultragraphs and hypergraphs, we illustrate the possible edges in graphs, ultragraphs and hypergraphs in Figure \ref{fig:three graphs}.

\begin{figure}[!ht]
     \centering
     \begin{subfigure}[b]{0.3\textwidth}
         \centering
            \begin{center}
        \begin{minipage}{\linewidth}
        \centering
            \begin{tikzpicture}[ > = stealth, auto, thick, scale=0.8]
                \node (v0) at (0,0) {};
                \fill (v0) circle[radius=2pt];
                \node (v1) at (2,0) {};
                \fill (v1) circle[radius=2pt];
                \path[->] (v0) edge (v1); 
                \node (e0) at (1,0.25) {$e$};
                \node (v9) at (2,0.75) {};
                \node (v9) at (2,-0.75) {};
            \end{tikzpicture}
        \end{minipage}
    \end{center}
         \caption{Graph ($\lvert s(e)\rvert=\lvert r(e)\rvert=1$)}
     \end{subfigure}
     \hfill
     \begin{subfigure}[b]{0.3\textwidth}
         \centering
            \begin{center}
        \begin{minipage}{\linewidth}
        \centering
            \begin{tikzpicture}[ > = stealth, auto, thick, scale=0.8]
                
                \node (v4) at (0,0) {};
               \fill (v4) circle[radius=2pt];
               \node (v5) at (2,0.75) {};
                \fill (v5) circle[radius=2pt];
                \node (v6) at (2,-0.75) {};
                \fill (v6) circle[radius=2pt];
                \path[->] (v4) edge (v5); 
                \node (e0) at (1,0.6) {$e$};
                \path[->] (v4) edge (v6); 
                \node (e0) at (1,-0.6) {$e$};
            \end{tikzpicture}
        \end{minipage}
    \end{center}
         \caption{Ultragraph ($\lvert s(e)\rvert=1$)}
    \end{subfigure}
     \hfill
     \begin{subfigure}[b]{0.3\textwidth}
         \centering
            \begin{center}
        \begin{minipage}{\linewidth}
        \centering
            \begin{tikzpicture}[ > = stealth, auto, thick, scale=0.8]
                \node (v8) at (0,0) {};
                \fill (v8) circle[radius=2pt];
                \node (v9) at (2,0.75) {};
                \fill (v9) circle[radius=2pt];
                \node (v10) at (2,-0.75) {};
                \fill (v10) circle[radius=2pt];
                \path[->] (v8) edge (v9); 
                \node (e0) at (1,0.6) {$e$};
                \path[->] (v8) edge (v10); 
                \node (e0) at (1,-0.6) {$e$};
                \path[->] (v9) edge (v10); 
                \node (e0) at (2.25,0) {$e$};
                \path[->, in=0, out=90, loop] (v9) edge (v9); 
                \node (e0) at (2.6,1.2) {$e$};
            \end{tikzpicture}
        \end{minipage}
    \end{center}
         \caption{Hypergraph}
     \end{subfigure}
        \caption{An edge $e$ in a graph, an ultragraph and a hypergraph.}
        \label{fig:three graphs}
\end{figure}
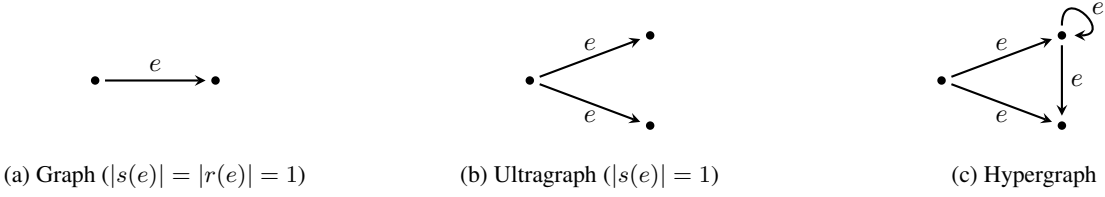


\begin{definition} \label{generalized_vertices}
    Let $H\Gamma=(V,E,r,s)$ be a hypergraph. Let 
        \begin{align*}
            \mathcal{V}_0:=\{\{x\}\mid x\in V\}\cup \{s(e),\, r(e) \;|\; e \in E\}\subset \mathcal P(V)
        \end{align*}
        and let $\mathcal V$ be the smallest subcollection of $\mathcal{P}(V)$ containing $\mathcal{V}_0$ which is closed under finite unions and finite intersections. We call the sets $A \in \mathcal{V}$ \emph{generalized vertices}\index{Generalized vertices}. 
\end{definition}

With this notation we can give a slightly different definition of a Cuntz-Krieger family involving the generalized vertices. This definition is adapted from \cite[Def. 2.7]{Tomforde}. 

\begin{definition}\label{Hypergraph_Def_generalized}
    Let $H\Gamma=(V, E, r, s)$ be a (not necessarily finite) hypergraph. A \emph{generalized Cuntz-Krieger $H\Gamma$-family}\index{Cuntz-Krieger family!Generalized} is a collection of partial isometries $\{s_e\;|\; e \in E\}$ and orthogonal projections $\{p_A \; |\; A \in \mathcal{V}\}$ such that 
        \begin{enumerate}
            \item[(GR0)] $p_\emptyset=0$, $p_Ap_B=p_{A \cap B}$ and $p_{A \cup B}=p_A+p_B-p_{A \cap B}$ for all $ A,B \in \mathcal{V}$;
            \item[(GR1)] $s_e^*s_f=\delta_{ef}p_{r(e)}$ for all $e,f \in E$;
            \item[(GR2a)] $s_es_e^*\leq p_{s(e)}$ for all $e \in E$;
            \item[(GR2b)] $p_{\{v\}} \leq \sum_{e \in E, v \in s(e)}s_es_e^*$ for all $v \in V$ which emit at least one and at most finitely many edges (i.e. $\emptyset\neq s(e)$ and $\lvert s(e)\rvert<\infty$).
        \end{enumerate}
The universal $C^*$-algebra $C^*(H\Gamma)$ generated by these generators and relations is called a \emph{hypergraph $C^*$-algebra}.
\end{definition}

    For Definition \ref{Hypergraph_Def_generalized}, the clue is that this definition also makes sense for infinite hypergraphs. In the infinite case, we have to ensure that all sums of projections are well defined. Thus, all infinite sums must be avoided. Relation (GR2b) is restricted accordingly.
     But for edges with infinitely many vertices in their source or range, we have to adjust the other hypergraph relations as well, due to the possibly infinite sums
        \begin{align*}
            \sum_{v \in r(e)}p_v \qquad \text{and} \qquad \sum_{v \in s(e)}p_v.
        \end{align*}
    The approach involving generalized vertices forces the existence of the required projections in a natural way. And it turns out that in the finite case, both definitions still coincide.

\begin{lemma}
    Let $H\Gamma$ be a finite hypergraph. Then the Cuntz-Krieger families in Definition  \ref{Hypergraph_Def} and Definition \ref{Hypergraph_Def_generalized} generate the same $C^*$-algebra. 
\end{lemma}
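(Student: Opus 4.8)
The plan is to exhibit mutually inverse $*$-homomorphisms between the two universal $C^*$-algebras by means of their universal properties; the whole point is that in the finite case the ``extra'' generators $p_A$ of Definition \ref{Hypergraph_Def_generalized} are redundant, since each is a finite sum of the singleton projections. Write $C^*_1(H\Gamma)$ for the algebra of Definition \ref{Hypergraph_Def} and $C^*_2(H\Gamma)$ for that of Definition \ref{Hypergraph_Def_generalized}.

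First I would set up the correspondence on generators in both directions. Given a Cuntz-Krieger family $\{p_v, s_e\}$ in the sense of Definition \ref{Hypergraph_Def}, define for each $A\in\mathcal{V}$ the element $p_A:=\sum_{v\in A}p_v$; this is a finite sum since $V$ is finite, hence a projection by mutual orthogonality. One checks $\{s_e,p_A\}$ satisfies (GR0)--(GR2b): the relations $p_Ap_B=p_{A\cap B}$ and $p_{A\cup B}=p_A+p_B-p_{A\cap B}$ are immediate from orthogonality of the $p_v$ together with inclusion-exclusion, while (GR1), (GR2a), (GR2b) are direct rephrasings of (HR1), (HR2a), (HR2b) once one observes $p_{r(e)}=\sum_{v\in r(e)}p_v$ and $p_{s(e)}=\sum_{v\in s(e)}p_v$, using also that ``no sink'' and ``emits at least one edge'' coincide for finite hypergraphs. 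Conversely, given a generalized family $\{s_e,p_A\}$, set $p_v:=p_{\{v\}}$ (legitimate since $\{v\}\in\mathcal{V}_0\subseteq\mathcal{V}$); these are mutually orthogonal because $p_{\{v\}}p_{\{w\}}=p_{\{v\}\cap\{w\}}=p_\emptyset=0$ for $v\neq w$ by (GR0).

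The crux is the identity
\[
p_A=\sum_{v\in A}p_{\{v\}}\qquad\text{for all }A\in\mathcal{V},
\]
valid in any generalized family. Since $V$ is finite, every $A\in\mathcal{V}$ is a finite set, so I write $A=\{v_1\}\cup\cdots\cup\{v_k\}$ and induct on $k$: each partial union $\{v_1\}\cup\cdots\cup\{v_j\}$ lies in $\mathcal{V}$ (which contains all singletons and is closed under finite unions), and (GR0) gives $p_{\{v_1\}\cup\cdots\cup\{v_j\}}=p_{\{v_1\}\cup\cdots\cup\{v_{j-1}\}}+p_{\{v_j\}}-p_\emptyset$ because $(\{v_1\}\cup\cdots\cup\{v_{j-1}\})\cap\{v_j\}=\emptyset$. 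This is precisely the step that fails for infinite hypergraphs, and it is the reason the two definitions genuinely diverge there; finiteness of $V$ (so that all sums are finite and $\lvert A\rvert<\infty$) is used essentially here. With this identity, the generalized family $\{s_e,p_{\{v\}}\}$ satisfies (HR1)--(HR2b): for instance (HR1) reads $s_e^*s_f=\delta_{ef}p_{r(e)}=\delta_{ef}\sum_{v\in r(e)}p_{\{v\}}$, and (HR2a), (HR2b) follow the same way.

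Finally I invoke the universal properties. The assignments $s_e\mapsto s_e$, $p_A\mapsto\sum_{v\in A}p_v$ and $s_e\mapsto s_e$, $p_v\mapsto p_{\{v\}}$ yield $*$-homomorphisms $\Phi\colon C^*_2(H\Gamma)\to C^*_1(H\Gamma)$ and $\Psi\colon C^*_1(H\Gamma)\to C^*_2(H\Gamma)$. They are mutually inverse: both fix the $s_e$, while $\Psi\Phi(p_A)=\sum_{v\in A}p_{\{v\}}=p_A$ by the crux identity and $\Phi\Psi(p_v)=\sum_{w\in\{v\}}p_w=p_v$, so each composite fixes a generating set and is the identity. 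I expect the only real content to be the inductive identity above; everything else is a mechanical translation of relations, and the single point worth flagging throughout is that finiteness of $V$ is exactly what makes the reduction to singleton projections legitimate.
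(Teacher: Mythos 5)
Your proof is correct and follows essentially the same route as the paper: both directions rest on the correspondence $p_A \leftrightarrow \sum_{v\in A} p_v$ and $p_v \leftrightarrow p_{\{v\}}$ between the two kinds of Cuntz-Krieger families, transported to the universal $C^*$-algebras via their universal properties. The only difference is one of detail: you make explicit the inductive identity $p_A=\sum_{v\in A}p_{\{v\}}$ (via (GR0) and finiteness of $V$) and the verification that the two $*$-homomorphisms are mutually inverse, both of which the paper's very brief proof leaves implicit.
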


\begin{proof}
Given a Cuntz-Krieger family as in Def. \ref{Hypergraph_Def} of projections $p_v, v\in V$ and partial isometries $s_e$, $e\in E$, the sums $p_A:=\sum_{v \in A}p_v$ are projections for all generalized vertices $A \in \mathcal{V}$ and we obtain a generalized Cuntz-Krieger $H\Gamma$-family as in Def. \ref{Hypergraph_Def_generalized}. Conversely,  in a generalized Cuntz-Krieger $H\Gamma$-family as in Def. \ref{Hypergraph_Def_generalized}, the projections $p_v:=p_{\{v\}}$, $v\in V$ and the partial isometries $s_e$, $e\in E$ form a Cuntz-Krieger $H\Gamma$-family as in Def. \ref{Hypergraph_Def}.
\end{proof}

If $H\Gamma$ is an ultragraph, then $C^*(H\Gamma)$ is an ultragraph $C^*$-algebra in Tomforde's sense \cite[Def. 2.7]{Tomforde} and vice versa.
This class is nuclear, see \cite[Thm. 5.22]{Morita_ultragraphs}. 

\begin{proposition}\label{MoritaUltra}
    Any ultragraph $C^*$-algebra is Morita equivalent to a graph $C^*$-algebra. In particular, all ultragraph $C^*$-algebras are nuclear.
\end{proposition}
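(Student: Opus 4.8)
The plan is to exhibit every ultragraph $C^*$-algebra as a full corner of an ordinary graph $C^*$-algebra; this yields the Morita equivalence at once, and nuclearity then follows because graph $C^*$-algebras are nuclear and nuclearity passes to hereditary subalgebras. (This reproves the theorem of \cite{Morita_ultragraphs}.) Given an ultragraph $U\Gamma=(V,E,r,s)$ with single-valued source $s\colon E\to V$, I would first build a graph $G$ by \emph{resolving} each set-valued range: put $V':=V\sqcup\{w_e\mid e\in E\}$, and for each $e\in E$ introduce a bridge edge $f_e$ with $s(f_e)=s(e)$, $r(f_e)=w_e$, together with a fan of edges $g_{e,v}$, one for each $v\in r(e)$, with $s(g_{e,v})=w_e$, $r(g_{e,v})=v$. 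Thus an ultraedge $e$ becomes an ordinary edge into the single new vertex $w_e$, which then redistributes over $r(e)$.

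Next I would transport the generators. Writing $P,S$ for the Cuntz-Krieger $G$-family, set $\hat p_v:=P_v$ and $\hat s_e:=S_{f_e}\sum_{v\in r(e)}S_{g_{e,v}}$. The decisive observation is that (CK2) at $w_e$ forces the projections $S_{g_{e,v}}S_{g_{e,v}}^*$ to be mutually orthogonal (Remark \ref{orderrelation}), so that, using also the orthogonality of the $P_v$, the element $T_e:=\sum_{v\in r(e)}S_{g_{e,v}}$ is a partial isometry with $T_eT_e^*=P_{w_e}$ and $T_e^*T_e=\sum_{v\in r(e)}P_v$. From this a short computation gives $\hat s_e^*\hat s_e=\sum_{v\in r(e)}\hat p_v$, $\hat s_e\hat s_e^*=S_{f_e}S_{f_e}^*\le\hat p_{s(e)}$, and, via (CK2) at the emitting vertices $v\in V$, $\hat p_v=\sum_{s(e)=v}\hat s_e\hat s_e^*$. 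Hence $\{\hat p_v,\hat s_e\}$ satisfies the ultragraph relations (GR1), (GR2a), (GR2b), and the universal property yields a $*$-homomorphism $\pi\colon C^*(U\Gamma)\to C^*(G)$.

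Then I would identify the image with a full corner. Let $q:=\sum_{v\in V}P_v$. Since $\hat s_e\hat s_e^*$ and $\hat s_e^*\hat s_e$ are dominated by $q$, the image lands in $qC^*(G)q$; conversely $\hat s_e\hat p_v=S_{f_e}S_{g_{e,v}}$ and $\hat s_e\hat s_e^*=S_{f_e}S_{f_e}^*$ put every length-two block of $G$ into the image, and as $G$ is bipartite between $V$ and $\{w_e\}$, compressing any spanning monomial $S_\mu S_\nu^*$ by $q$ decomposes it into such pieces; hence $\pi$ surjects onto $qC^*(G)q$. The projection $q$ is full: $P_{w_e}=S_{f_e}^*(S_{f_e}S_{f_e}^*)S_{f_e}$ lies in the ideal generated by $q$, so that ideal contains every vertex projection and therefore the unit. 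Granting injectivity of $\pi$, we obtain $C^*(U\Gamma)\cong qC^*(G)q$, a full corner of the graph $C^*$-algebra $C^*(G)$; this is the desired Morita equivalence, and since a corner is a hereditary subalgebra of the nuclear algebra $C^*(G)$, the ultragraph algebra is nuclear.

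I expect two points to carry the real weight. The first is injectivity of $\pi$: rather than argue it directly, I would deduce it from the gauge-invariant uniqueness theorem for ultragraph algebras (\cite{Tomforde}), checking that the gauge action on $C^*(U\Gamma)$ matches the restriction to the corner of the canonical gauge action on $C^*(G)$ and that each $\hat p_v\neq 0$. The second, and the genuine obstacle, is the infinite case: if some $r(e)$ or some source fibre $s^{-1}(v)$ is infinite, then $T_e$ and $q$ are no longer norm-convergent, so all relations must be read through the generalized-vertex projections of Definition \ref{Hypergraph_Def_generalized} (in particular (GR0)), $q$ must be interpreted as a full projection in the multiplier algebra $M(C^*(G))$, and the Morita equivalence must be realized through the imprimitivity bimodule $qC^*(G)$ rather than a norm-convergent corner.
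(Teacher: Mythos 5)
The first thing to note is that the paper does not prove Proposition \ref{MoritaUltra} at all: it is quoted from \cite[Thm. 5.22]{Morita_ultragraphs}, with no argument given. So your proposal is an attempt to supply a proof where the paper supplies a citation. The closest the paper comes to such an argument is Proposition \ref{rangesingularization} and the corollary following it: for a \emph{finite} ultragraph, decomposing each range into single vertices (edges $(e,v)$ with $\tilde s((e,v))=s(e)$, $\tilde r((e,v))=v$) produces an honest graph whose $C^*$-algebra is \emph{isomorphic} to the ultragraph algebra --- a conclusion both stronger than Morita equivalence and cheaper to obtain than your construction (no new vertices $w_e$, no corner, no uniqueness theorem). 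Your bridge-vertex graph $G$ and the identification of the image of $\pi$ with the full corner $qC^*(G)q$ are correct for finite ultragraphs: the relations for $\{\hat p_v,\hat s_e\}$, the bipartite decomposition of $qS_\mu S_\nu^*q$ into blocks $S_{f_e}S_{g_{e,v}}$ and $S_{f_e}S_{f_e}^*$, and the fullness of $q$ all check out. One step is wrong as written, though: the \emph{canonical} gauge action on $C^*(G)$ sends $\hat s_e=S_{f_e}\sum_{v\in r(e)}S_{g_{e,v}}$ to $z^2\hat s_e$, not $z\hat s_e$, so it does not intertwine the gauge action of $C^*(U\Gamma)$ as you claim. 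The repair is routine --- use the action determined by $S_{f_e}\mapsto zS_{f_e}$ and $S_{g_{e,v}}\mapsto S_{g_{e,v}}$, which exists by universality, fixes $q$, and hence restricts to the corner --- but without it the injectivity step collapses.

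The genuine gap is the infinite case, which is exactly the content of the theorem being cited. You flag it honestly, but your proposed remedy (read everything in the multiplier algebra) does not work: if $r(e)$ is infinite, the sum $T_e=\sum_{v\in r(e)}S_{g_{e,v}}$ does not converge strictly in $M(C^*(G))$. Indeed, multiplying a finite partial sum on the left by $P_{w_e}$ returns that partial sum, and every nonempty increment has norm one, since $\|\sum_{v\in F}S_{g_{e,v}}\|^2=\|\sum_{v\in F}P_v\|=1$; so the net is not strictly Cauchy. Hence $\hat s_e$ cannot even be defined by your formula in that generality --- this is precisely why infinite ultragraph algebras must be generated by the projections $p_A$, $A\in\mathcal V$, of Definition \ref{Hypergraph_Def_generalized}, and why the proof in \cite{Morita_ultragraphs} takes a substantially different route. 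In sum: restricted to finite ultragraphs your argument is correct after the gauge fix (though it yields less, with more machinery, than the paper's range decomposition); as a proof of Proposition \ref{MoritaUltra} in the stated generality it has a real gap.
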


\begin{remark}\label{unit_infinite_hypergraph}
    In Lemma \ref{unit} we showed, that hypergraph $C^*$-algebras are always unital, if the underlying hypergraph is finite. This fails to be true for an infinite number of vertices, as an infinite sum of projections cannot converge in norm. 
    However, as in \cite[Lem. 3.2]{Tomforde}, one can show  that $C^*(H\Gamma)$ is unital if and only if 
        \begin{align*}
            V \subset \left\{\bigcup_{i=1}^n\left(\bigcap_{e\in X_i}r(e)\right) \cup \bigcup_{i=1}^m\left(\bigcap_{e\in Y_i}s(e)\right)\cup F\; \Big|\; X_i, Y_i\; \subseteq E \; \text{finite}, \; F\subseteq V \;\text{finite}\right\}.
        \end{align*}
    The proof of this follows in exactly the same way as in \cite[Lem. 3.2]{Tomforde}, except for the construction of the approximate unit. 
\end{remark}

\subsection{Decomposition of ranges}

Multiple hypergraphs can have a similar corresponding $C^*$-algebra. By decomposing the range of an edge, we give a concrete way to construct new hypergraphs while leaving the corresponding $C^*$-algebra invariant. Also, this  gives us some information about the relation between graph and ultragraph $C^*$-algebras and it shows the crucial differences to hypergraphs. The idea of the proof is based on \cite{Morita_ultragraphs}, where it is shown that each (infinite) ultragraph $C^*$-algebra is Morita equivalent to a graph algebra. 

\begin{proposition}\label{rangesingularization}\index{Decomposition of ranges}
    Let $H\Gamma=(V,E,r,s)$ be a finite hypergraph. Define the hypergraph $\tilde{H\Gamma}=(\tilde V,\tilde E,\tilde{r},\tilde{s})$ as
        \begin{align*}
            &\tilde V:=V,\\
            &\tilde E:=\{(e,v)\; |\; e \in E, \; v \in r(e)\},\\
            &\tilde{r}((e,v)):=v,\\
            &\tilde{s}((e,v)):=s(e).
        \end{align*}
    The corresponding hypergraph $C^*$-algebras are isomorphic, i.e. $C^*(H\Gamma)\cong C^*(\tilde{H\Gamma})$. In particular it holds that $\tilde{r}:\tilde E\rightarrow \tilde V$ (rather than $\tilde r:\tilde E\to\mathcal P(\tilde V)\backslash \{\emptyset\})$, i.e. we have $\lvert \tilde r(\tilde e)\rvert =1$ for all $\tilde e\in\tilde E$. 
\end{proposition}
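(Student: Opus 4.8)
The plan is to build the isomorphism explicitly from the universal properties of the two hypergraph $C^*$-algebras, by producing a Cuntz--Krieger $\tilde{H\Gamma}$-family inside $C^*(H\Gamma)$ and, conversely, a Cuntz--Krieger $H\Gamma$-family inside $C^*(\tilde{H\Gamma})$, and then verifying that the two induced $*$-homomorphisms are mutually inverse. The underlying idea is that the partial isometry $s_e$, whose range projection is $\sum_{v\in r(e)} p_v$ by (HR1), should be cut into pieces indexed by the vertices $v\in r(e)$. Accordingly, inside $C^*(H\Gamma)$ I would set $\tilde P_v := p_v$ and $\tilde S_{(e,v)} := s_e p_v$, while inside $C^*(\tilde{H\Gamma})$ I would set $P_v := \tilde p_v$ and $S_e := \sum_{v\in r(e)} \tilde s_{(e,v)}$.

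First I would check that $\{\tilde P_v, \tilde S_{(e,v)}\}$ is a Cuntz--Krieger $\tilde{H\Gamma}$-family. Relation (HR1) follows from a direct computation using (HR1) for $H\Gamma$ and the mutual orthogonality of the $p_v$: one gets $\tilde S_{(e,v)}^* \tilde S_{(f,w)} = \delta_{ef}\delta_{vw} p_v$, which is the required right-hand side $\delta_{(e,v),(f,w)} \tilde P_v$ since $\tilde r((e,v)) = \{v\}$; in particular each $\tilde S_{(e,v)}$ is a partial isometry. For (HR2a) I would use $p_v \leq s_e^* s_e$ together with the standard order fact $0\leq a\leq b \Rightarrow s_e a s_e^* \leq s_e b s_e^*$ to obtain $\tilde S_{(e,v)} \tilde S_{(e,v)}^* = s_e p_v s_e^* \leq s_e s_e^* \leq \sum_{u\in s(e)} p_u$. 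For (HR2b) the key identity is $\sum_{w\in r(e)} s_e p_w s_e^* = s_e (s_e^* s_e) s_e^* = s_e s_e^*$, which collapses the sum over $\tilde E$ to a sum over $E$ and thereby reduces (HR2b) for $\tilde{H\Gamma}$ to (HR2b) for $H\Gamma$; one also records that $v$ is a sink in $\tilde{H\Gamma}$ exactly when it is a sink in $H\Gamma$, since $\tilde s((e,v)) = s(e)$.

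For the reverse family $\{P_v, S_e\}$, the decisive observation is that the cross terms vanish: by Proposition \ref{source_range_projections} we have $\tilde s_{(e,v)} = \tilde s_{(e,v)} \tilde p_v$ and $\tilde s_{(e,w)}^* = \tilde p_w \tilde s_{(e,w)}^*$, hence $\tilde s_{(e,v)} \tilde s_{(e,w)}^* = 0$ for $v\neq w$. This gives $S_e^* S_e = \sum_{v\in r(e)} \tilde p_v$, so $S_e$ is a partial isometry satisfying (HR1), and it makes $S_e S_e^* = \sum_{v\in r(e)} \tilde s_{(e,v)} \tilde s_{(e,v)}^*$ a sum of mutually orthogonal projections. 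Relation (HR2a) then follows from the elementary fact (cf. Remark \ref{orderrelation}) that finitely many mutually orthogonal projections, each dominated by a projection $q$, have sum dominated by $q$, applied with $q = \sum_{u\in s(e)} \tilde p_u$; and (HR2b) matches the corresponding $\tilde{H\Gamma}$-relation term by term.

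I expect the main obstacle to be exactly this verification of (HR2a) for $\{S_e\}$: before the order relation is even meaningful one must first argue, via the vanishing of cross terms, that $S_e S_e^*$ is genuinely a projection, and only then invoke the orthogonality fact to pass from the individual bounds $\tilde s_{(e,v)} \tilde s_{(e,v)}^* \leq \sum_{u\in s(e)} \tilde p_u$ to a bound on their sum. With both families constructed, the universal property yields $*$-homomorphisms $\Phi\colon C^*(\tilde{H\Gamma}) \to C^*(H\Gamma)$ and $\Psi\colon C^*(H\Gamma)\to C^*(\tilde{H\Gamma})$, and checking that they compose to the identities on generators is routine: the only substantial identities are $\Phi(\Psi(s_e)) = \sum_{v\in r(e)} s_e p_v = s_e \sum_{v\in r(e)} p_v = s_e$ and $\Psi(\Phi(\tilde s_{(e,v)})) = \sum_{w\in r(e)} \tilde s_{(e,w)} \tilde p_v = \tilde s_{(e,v)}$, both immediate from (HR1).
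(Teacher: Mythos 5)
Your proposal is correct and follows essentially the same route as the paper's proof: the paper also cuts $s_e$ into the pieces $s_ep_v$ to get the Cuntz--Krieger $\tilde{H\Gamma}$-family in $C^*(H\Gamma)$, reassembles via $S_e:=\sum_{v\in r(e)}t_{(e,v)}$ for the converse family, checks the same three relations (including the vanishing of cross terms via Proposition \ref{source_range_projections}), and concludes by the universal property that the two $*$-homomorphisms are mutually inverse. Your explicit verification of the compositions on generators is a slightly more detailed write-up of what the paper leaves as a final remark.
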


\begin{proof}
    Let $\{q_v\;|\; v \in \tilde V\}$, $\{t_\alpha \;|\; \alpha \in \tilde E\}$ be the universal Cuntz-Krieger $\tilde{H\Gamma}$-family. We define
        \begin{align*}
            P_v&:=q_v  \hspace{68pt}\forall v \in V,\\
            S_e&:=\sum_{v \in r(e)}t_{(e,v)} \hspace{30pt} \forall e \in E.
        \end{align*}
    The elements $\{P_v\;|\; v \in V\}$ are mutually orthogonal projections and a quick calculation shows that $\{S_e \;|\; e \in E\}$ are partial isometries with mutually orthogonal ranges. Together they form a Cuntz-Krieger $H\Gamma$-family in $C^*(\tilde{H\Gamma})$, as we see in the following. \\
    
    For (HR1) of $C^*(H\Gamma)$, check that (HR1) of $C^*(\tilde{H\Gamma})$ implies
    \[  S_e^*S_f=\sum_{v \in r(e)}\sum_{w \in r(f)}t_{(e,v)}^*t_{(f,w)}=\delta_{ef}\sum_{v \in r(e)}q_v=\delta_{ef}\sum_{v \in r(e)}P_v.\]

     For (HR2a) of $C^*(H\Gamma)$, using that the ranges of $(e,w)$ and $(e,z)$ for distinct vertices $w$, $z$ are disjoint we get using Proposition \ref{source_range_projections} that $t_{(e,w)}t_{(e,z)}^*=0$ for $w \neq z$. Since Relation (HR2a) of $C^*(\tilde{H\Gamma})$ implies that $t_{(e,w)}t_{(e,w)}^*\leq \sum_{v \in s(e)}q_v$ for all $w \in r(e)$ we get
     \[
        S_eS_e^*=\sum_{w \in r(e)}t_{(e,w)}\sum_{z \in r(e)}t_{(e,z)}^*
        =\sum_{w \in r(e)}t_{(e,w)}t_{(e,w)}^*
        \leq \sum_{v \in s(e)}q_v
        =\sum_{v \in s(e)}P_v.\]
        
    For (HR2b) of $C^*(H\Gamma)$, using (HR2b) of $C^*(\tilde{H\Gamma})$ and the orthogonality of the ranges of $(e,w)$ and $(e,z)$ for distinct vertices $w$ and $z$, we get
    \begin{align*}
        P_v&=q_v\\
        &\leq \sum_{\alpha \in \tilde{E}, v \in s(\alpha)}t_\alpha t_\alpha^*\\
        &=\sum_{e \in E, v \in s(e)}\;\sum_{w \in r(e)}t_{(e,w)} t_{(e,w)}^*\\
        &=\sum_{e \in E, v \in s(e)}\;\sum_{w \in r(e)}t_{(e,w)}\sum_{z \in r(e)}t_{(e,z)}^*\\
        &=\sum_{e \in E, v \in s(e)}S_eS_e^*.
    \end{align*}
    
    Hence all hypergraph relations are fulfilled and we thus get a *-homomorphism $\phi:C^*(H\Gamma)\rightarrow C^*(\tilde{H\Gamma})$ which maps the canonical generators $s_e \mapsto S_e$ and $p_v \mapsto P_v$.
    
    To construct the inverse map we define the elements
    \begin{align*}
        Q_v&:=p_v \hspace{40pt} \forall v \in \tilde V,\\
        T_{(e,v)}&:=s_ep_v \hspace{30pt} \forall (e,v) \in \tilde{E}.
    \end{align*}
    Clearly, the elements $Q_v$ are mutually orthogonal projections and $T_{(e,v)}$ is a partial isometry for each $(e,v) \in \tilde{E}$, since $s_ep_vs_e^*$ is a projection, using (HR1). We check that these elements form a Cuntz-Krieger $\tilde{H\Gamma}$-family in $C^*(H\Gamma)$.
    
    For (HR1) of $C^*(\tilde{H\Gamma})$, relation (HR1) of  $C^*(H\Gamma)$ yields
\begin{align*}
            T_{(e,v)}^*T_{(f,w)}&=p_vs_e^*s_fp_w\\
        &=\delta_{ef}  p_v\big(\sum_{z\in r(e)}p_{z}\big)p_w\\
        &=\delta_{ef}\delta_{vw}p_v\\
        &=\delta_{(e,v),(f,w)}Q_v.
\end{align*}

        For (HR2a) of $C^*(\tilde{H\Gamma})$, using the definition of partial isometries and the order relation of projections we get by applying (HR2a) of $C^*(H\Gamma)$:
        \begin{align*}
        T_{(e,v)}T_{(e,v)}^*&=s_ep_vp_vs_e^*\\
        &\leq s_es_e^*\\
        &\leq \sum_{w \in s(e)}p_w\\
        &=\sum_{w \in s((e,v))}Q_w.
        \end{align*}
        
   For (HR2b) of $C^*(\tilde{H\Gamma})$, we get by (HR2b) and (HR1) of $C^*(H\Gamma)$:
   \begin{align*}
        Q_v&=p_v\\
        &\leq \sum_{e \in E, v \in s(e)}s_es_e^*\\
        &\leq \sum_{e \in E, v \in s(e)}\sum_{v \in r(e)}s_ep_vs_e^*\\
        &= \sum_{e \in E, v \in s(e)}\sum_{v \in r(e)}T_{(e,v)}T_{(e,v)}^*\\
        &=\sum_{\alpha \in \tilde{E}, v \in s(\alpha)}T_\alpha T_\alpha^*.
    \end{align*}
    
    The universal property then yields a  *-homomorphism $\psi:C^*(\tilde{H\Gamma})\rightarrow C^*(H\Gamma)$ which is inverse to $\phi$.
\end{proof}

    Instead of a complete decomposition of the range into its single vertices we could have also disassembled it into a disjoint union of nonempty sets, i.e. $r(e)=\mathcal{E}_1\cup \dots \cup \mathcal{E}_n$ and associate to each set $\mathcal{E}_j$ the edge $(e,\mathcal{E}_j)$.

\begin{proposition}
        Let $H\Gamma=(V,E,r,s)$ be a finite hypergraph. For each $e \in E$, let $r(e)=\mathcal{E}_1\cup \dots \cup \mathcal{E}_{n_e}$ for nonempty disjoint sets $\mathcal{E}_j$ and $n_e \in \N$. Define the hypergraph $\tilde{H\Gamma}=(\tilde V,\tilde E,\tilde{r},\tilde{s})$ as
        \begin{align*}
            &\tilde V:=V,\\
            &\tilde E:=\{(e,\mathcal{E}_j)\; |\; e \in E, \; j=1,\dots, n_e\},\\
            &\tilde{r}((e,\mathcal{E}_j)):=\mathcal{E}_j,\\
            &\tilde{s}((e,\mathcal{E}_j))):=s(e).
        \end{align*}
    The corresponding hypergraph algebras are isomorphic, i.e. $C^*(H\Gamma)\cong C^*(\tilde{H\Gamma})$.
\end{proposition}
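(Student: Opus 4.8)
The plan is to follow the blueprint of Proposition \ref{rangesingularization} almost verbatim, replacing each single range vertex $v$ by the block $\mathcal{E}_j$ and the vertex projection $p_v$ by the block projection $p_{\mathcal{E}_j}:=\sum_{v\in\mathcal{E}_j}p_v$. Two facts make everything go through: the decomposition $r(e)=\mathcal{E}_1\cup\cdots\cup\mathcal{E}_{n_e}$ is \emph{disjoint}, so that $\sum_{j}p_{\mathcal{E}_j}=\sum_{v\in r(e)}p_v$ and $p_{\mathcal{E}_j}p_{\mathcal{E}_k}=\delta_{jk}p_{\mathcal{E}_j}$; and each $\mathcal{E}_j\subseteq r(e)$, so that $p_{\mathcal{E}_j}$ is dominated by $s_e^*s_e=\sum_{v\in r(e)}p_v$ and is therefore absorbed by it in products. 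As before I would exhibit Cuntz-Krieger families going both ways and check that the resulting $*$-homomorphisms are mutually inverse on generators.

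For the first direction, let $\{q_v\},\{t_\alpha\}$ be the universal Cuntz-Krieger $\tilde{H\Gamma}$-family and set $P_v:=q_v$ and $S_e:=\sum_{j=1}^{n_e}t_{(e,\mathcal{E}_j)}$. Since the ranges $\tilde r((e,\mathcal{E}_j))=\mathcal{E}_j$ are pairwise disjoint, Proposition \ref{source_range_projections} gives $t_{(e,\mathcal{E}_j)}t_{(e,\mathcal{E}_k)}^*=0$ for $j\neq k$, while (HR1) gives $t_{(e,\mathcal{E}_j)}^*t_{(e,\mathcal{E}_k)}=0$ for $j\neq k$; hence the summands of $S_e$ have orthogonal initial and final spaces, $S_e$ is a partial isometry, and $S_eS_e^*=\sum_j t_{(e,\mathcal{E}_j)}t_{(e,\mathcal{E}_j)}^*$. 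Relation (HR1) then reduces to $\sum_j\sum_{v\in\mathcal{E}_j}q_v=\sum_{v\in r(e)}q_v$ (disjointness); (HR2a) follows from the orthogonal-summand form together with $t_{(e,\mathcal{E}_j)}t_{(e,\mathcal{E}_j)}^*\le\sum_{v\in s(e)}q_v$ and Remark \ref{orderrelation}; and (HR2b) uses $\tilde s((e,\mathcal{E}_j))=s(e)$ to collapse $\sum_{\alpha,\,v\in\tilde s(\alpha)}t_\alpha t_\alpha^*$ into $\sum_{e,\,v\in s(e)}S_eS_e^*$. This yields $\phi:C^*(H\Gamma)\to C^*(\tilde{H\Gamma})$ with $p_v\mapsto P_v$, $s_e\mapsto S_e$.

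For the second direction, set $Q_v:=p_v$ and $T_{(e,\mathcal{E}_j)}:=s_ep_{\mathcal{E}_j}$. Because $\mathcal{E}_j\subseteq r(e)$ we have $p_{\mathcal{E}_j}(s_e^*s_e)=p_{\mathcal{E}_j}$, whence $T_{(e,\mathcal{E}_j)}^*T_{(e,\mathcal{E}_j)}=p_{\mathcal{E}_j}$ is a projection and $T_{(e,\mathcal{E}_j)}$ is a partial isometry. Relation (HR1) for $\tilde{H\Gamma}$ reduces to $p_{\mathcal{E}_j}p_{\mathcal{E}_k}=\delta_{jk}p_{\mathcal{E}_j}$; (HR2a) follows from $T_{(e,\mathcal{E}_j)}T_{(e,\mathcal{E}_j)}^*=s_ep_{\mathcal{E}_j}s_e^*\le s_es_e^*\le\sum_{v\in s(e)}p_v$; and for (HR2b) the crucial computation is
\[
\sum_{j=1}^{n_e}T_{(e,\mathcal{E}_j)}T_{(e,\mathcal{E}_j)}^*=s_e\Big(\sum_{j}p_{\mathcal{E}_j}\Big)s_e^*=s_e\Big(\sum_{v\in r(e)}p_v\Big)s_e^*=s_es_e^*,
\]
again by disjointness, after which (HR2b) for $H\Gamma$ transfers directly. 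This gives $\psi:C^*(\tilde{H\Gamma})\to C^*(H\Gamma)$ with $q_v\mapsto Q_v$, $t_{(e,\mathcal{E}_j)}\mapsto T_{(e,\mathcal{E}_j)}$.

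Finally I would verify $\phi$ and $\psi$ are mutually inverse on generators. The identities $\psi\circ\phi(s_e)=s_e\sum_{v\in r(e)}p_v=s_e$ and the fixing of the vertex projections are immediate, and the only genuinely nontrivial check is $\phi\circ\psi(t_{(e,\mathcal{E}_j)})=\big(\sum_k t_{(e,\mathcal{E}_k)}\big)\big(\sum_{v\in\mathcal{E}_j}q_v\big)$, which collapses to $t_{(e,\mathcal{E}_j)}$ since $t_{(e,\mathcal{E}_k)}\big(\sum_{v\in\mathcal{E}_j}q_v\big)=t_{(e,\mathcal{E}_k)}\sum_{v\in\mathcal{E}_k\cap\mathcal{E}_j}q_v=\delta_{jk}t_{(e,\mathcal{E}_j)}$ by Proposition \ref{source_range_projections} and disjointness. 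The only (mild) obstacle throughout is bookkeeping the disjointness of the blocks $\mathcal{E}_j$: it is exactly what licenses both $\sum_j p_{\mathcal{E}_j}=\sum_{v\in r(e)}p_v$ and the vanishing of all cross terms, and it is the single place where this argument genuinely differs from the vertex-by-vertex decomposition of Proposition \ref{rangesingularization}.
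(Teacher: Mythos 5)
Your proof is correct and takes essentially the same approach as the paper: the paper's own proof of this proposition is the single line ``Similar to the proof of Proposition~\ref{rangesingularization},'' and your block-wise adaptation --- replacing each range vertex $v$ by the block $\mathcal{E}_j$ and $p_v$ by $p_{\mathcal{E}_j}=\sum_{v\in\mathcal{E}_j}p_v$, with disjointness of the blocks supplying both $\sum_j p_{\mathcal{E}_j}=\sum_{v\in r(e)}p_v$ and the vanishing of cross terms --- is precisely that argument carried out in detail, with both Cuntz--Krieger families and the mutual-inverse checks matching the paper's template.
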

\begin{proof}
    Similar to the proof of Proposition \ref{rangesingularization}.
\end{proof}

\begin{remark}
    We only get the decomposition for ranges -- the same approach for sources is not possible. For example the element $p_vs_e$ for a vertex $v \in s(e)$ is in general no partial isometry. Also, if it was possible to decompose  ranges \emph{and} sources, we would obtain a graph  $\tilde \Gamma$ such that $C^*(H\Gamma)\cong C^*(\tilde\Gamma)$  given any hypergraph $\Gamma$ contradicting the existence of non-nuclear hypergraph $C^*$-algebras (see  the next section).
\end{remark}

\begin{korollar}
    The $C^*$-algebra of a finite ultragraph is isomorphic to a graph $C^*$-algebra. 
\end{korollar}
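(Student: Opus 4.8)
The plan is to combine the range decomposition of Proposition \ref{rangesingularization} with the identification of graph $C^*$-algebras among hypergraph $C^*$-algebras established in Proposition \ref{nontrivial}. First I would recall that a finite ultragraph $U\Gamma=(V,E,r,s)$ is, by definition, nothing but a finite hypergraph satisfying the additional constraint $\lvert s(e)\rvert=1$ for every edge $e\in E$. Thus all the machinery developed for finite hypergraphs applies to it verbatim.

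Next I would apply Proposition \ref{rangesingularization} directly to $U\Gamma$, obtaining the decomposed hypergraph $\tilde{H\Gamma}=(\tilde V,\tilde E,\tilde r,\tilde s)$ with $\tilde V=V$, $\tilde E=\{(e,v)\mid e\in E,\ v\in r(e)\}$, $\tilde r((e,v))=v$ and $\tilde s((e,v))=s(e)$, together with the isomorphism $C^*(U\Gamma)\cong C^*(\tilde{H\Gamma})$ that the proposition supplies.

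The key observation is that $\tilde{H\Gamma}$ is in fact (the hypergraph associated to) an ordinary graph. Indeed, Proposition \ref{rangesingularization} already guarantees $\lvert\tilde r(\tilde e)\rvert=1$ for all $\tilde e\in\tilde E$. For the sources, the decomposition leaves them untouched, since $\tilde s((e,v))=s(e)$; combined with the ultragraph hypothesis $\lvert s(e)\rvert=1$ this yields $\lvert\tilde s(\tilde e)\rvert=1$ for all $\tilde e\in\tilde E$ as well. Hence $\tilde{H\Gamma}$ satisfies $\lvert\tilde s(\tilde e)\rvert=\lvert\tilde r(\tilde e)\rvert=1$, which is precisely the condition describing a hypergraph arising from a finite directed graph $\Gamma$.

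Finally I would invoke Proposition \ref{nontrivial}, which identifies the hypergraph $C^*$-algebra of such a graph-type hypergraph with the genuine graph $C^*$-algebra, giving $C^*(\tilde{H\Gamma})\cong C^*(\Gamma)$. Composing this with the isomorphism from the range decomposition produces $C^*(U\Gamma)\cong C^*(\Gamma)$, a graph $C^*$-algebra, as claimed. I do not anticipate any genuine obstacle: the entire content reduces to noting that the singleton-source property characterising ultragraphs is preserved under the range decomposition, which is immediate from the formula $\tilde s((e,v))=s(e)$, so that decomposing the ranges of an ultragraph is exactly what turns it into a graph.
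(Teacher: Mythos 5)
Your proposal is correct and follows exactly the paper's own argument: apply the decomposition of ranges (Proposition \ref{rangesingularization}) to the ultragraph, note that the singleton-source property is preserved so the result is a graph, and conclude via the isomorphism. The paper states this in one line; your write-up merely makes explicit the two details it leaves implicit, namely that $\tilde s((e,v))=s(e)$ keeps sources singletons and that Proposition \ref{nontrivial} identifies the resulting hypergraph $C^*$-algebra with the genuine graph $C^*$-algebra.
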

\begin{proof}
    Use the decomposition of ranges in order to obtain a graph $\tilde \Gamma$ with $C^*(H\Gamma)\cong C^*(\tilde\Gamma)$.
\end{proof}

We can reverse the above construction and merge edges with similar sources and disjoint ranges. We state the corollary in case of two edges, but it can  directly be generalized to any finite number of edges by iteration.

\begin{korollar}\label{merge_ranges}
    Let $H\Gamma=(V,E,r,s)$ be a finite hypergraph. Consider $e,f \in E$ with $s(e)=s(f)$ and $r(e)\cap r(f)=\emptyset$. The hypergraph $\tilde{H\Gamma}$ given by
        \begin{align*}
            &\tilde V:=V,\\
            &\tilde{E}:=(E\setminus\{e,f\})\cup\{g\},\\
            & \tilde{s}(h):=s(h) \quad \forall h \in E\setminus\{e,f\}, \qquad  \tilde{s}(g):=s(e),\\
            & \tilde{r}(h):=r(h) \quad \forall h \in E\setminus\{e,f\}, \qquad  \tilde{r}(g):=r(e)\cup r(f)
        \end{align*}
    generates an isomorphic hypergraph $C^*$-algebra.
\end{korollar}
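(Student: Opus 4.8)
The plan is to deduce the statement directly from the range-decomposition result that was just established, by applying it to the \emph{merged} hypergraph $\tilde{H\Gamma}$ rather than to $H\Gamma$. Concretely, I would invoke the unnumbered proposition immediately following Proposition \ref{rangesingularization} (the variant that decomposes a range into several disjoint nonempty blocks), now taking $\tilde{H\Gamma}$ as its input. I decompose the range of each edge of $\tilde{H\Gamma}$ as follows: every edge $h\neq g$ is left intact via the trivial one-block decomposition $\tilde r(h)=\tilde r(h)$, while the edge $g$ is split using the two-block decomposition $\tilde r(g)=r(e)\cup r(f)$. This is a legitimate decomposition into nonempty disjoint sets precisely because of the hypotheses $r(e)\cap r(f)=\emptyset$ (and $r(e),r(f)\neq\emptyset$, which always holds). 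That proposition then yields an isomorphism $C^*(\tilde{H\Gamma})\cong C^*(\widehat{H\Gamma})$, where $\widehat{H\Gamma}$ has edge set $\{(h,\tilde r(h)):h\neq g\}\cup\{(g,r(e)),(g,r(f))\}$.

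The key step is to recognize that $\widehat{H\Gamma}$ is just $H\Gamma$ after relabeling. The edge $(g,r(e))$ has source $\tilde s(g)=s(e)$ and range $r(e)$, so it plays the role of $e$; likewise $(g,r(f))$ has source $\tilde s(g)=s(e)=s(f)$ and range $r(f)$, so it plays the role of $f$; and each $(h,\tilde r(h))$ with $h\neq g$ has source $s(h)$ and range $r(h)$, so it is $h$ itself. Since any relabeling of the vertices and edges of a hypergraph induces an isomorphism of the associated universal $C^*$-algebras through the universal property, we obtain $C^*(\widehat{H\Gamma})\cong C^*(H\Gamma)$, and therefore $C^*(H\Gamma)\cong C^*(\tilde{H\Gamma})$.

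Alternatively, I would verify the isomorphism by hand, which makes the obstacle explicit. Writing $\{p_v,s_h\}$ for the universal family of $H\Gamma$ and $\{q_v,t_h\}$ for that of $\tilde{H\Gamma}$, I set $T_g:=s_e+s_f$, $T_h:=s_h$ for $h\neq g$, and $Q_v:=p_v$, then check that these form a Cuntz-Krieger $\tilde{H\Gamma}$-family in $C^*(H\Gamma)$. In the reverse direction I set $S_e:=t_g\left(\sum_{v\in r(e)}q_v\right)$, $S_f:=t_g\left(\sum_{v\in r(f)}q_v\right)$, $S_h:=t_h$ and $P_v:=q_v$, and check the hypergraph relations for $H\Gamma$. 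The universal property then produces two $*$-homomorphisms, and one verifies they are mutually inverse on generators, the only nonformal identities being $(s_e+s_f)\sum_{v\in r(e)}p_v=s_e$ (using Proposition \ref{source_range_projections}) and $S_e+S_f=t_g$.

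The main obstacle in the direct verification is relation (HR2a) for the merged edge: establishing $T_gT_g^*\leq\sum_{v\in\tilde s(g)}Q_v$ requires first that the cross terms vanish, $s_es_f^*=s_fs_e^*=0$, which is exactly the content of Proposition \ref{source_range_projections} under $r(e)\cap r(f)=\emptyset$, so that $T_gT_g^*=s_es_e^*+s_fs_f^*$ is a sum of two orthogonal subprojections of $\sum_{v\in s(e)}p_v$. The dual point in the splitting direction is that $\sum_{v\in r(e)}q_v$ and $\sum_{v\in r(f)}q_v$ are orthogonal projections whose sum is $t_g^*t_g$; this makes $S_e,S_f$ partial isometries with $S_eS_e^*+S_fS_f^*=t_gt_g^*$, which is precisely the identity matching (HR2b) on both sides. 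Everything else, namely (HR1) and the sink bookkeeping, is routine, since the source sets and hence the set of non-sink vertices coincide for $H\Gamma$ and $\tilde{H\Gamma}$.
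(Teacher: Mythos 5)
Your proposal is correct, and your primary route is in essence the paper's own argument: the paper also deduces the corollary from the decomposition-of-ranges machinery, applying Proposition \ref{rangesingularization} (decomposition into single vertices) to \emph{both} $H\Gamma$ and $\tilde{H\Gamma}$ and observing that the two decompositions yield the same hypergraph; the hypotheses $s(e)=s(f)$ and $r(e)\cap r(f)=\emptyset$ are exactly what make the edges $(e,v),(f,v)$ from decomposing $H\Gamma$ correspond bijectively, with sources and ranges preserved, to the edges $(g,v)$, $v\in r(e)\cup r(f)$, from decomposing $\tilde{H\Gamma}$. Your version differs only in packaging: you apply the block-decomposition variant (the unnumbered proposition) once, to $\tilde{H\Gamma}$ with the two-block splitting $\tilde r(g)=r(e)\cup r(f)$, and recover $H\Gamma$ up to relabeling, which saves one application and avoids comparing two decomposed hypergraphs, at the mild cost of resting on a proposition whose proof the paper itself only sketches as ``similar to Proposition \ref{rangesingularization}.'' Your alternative hand verification is also sound and is a genuine addition: the identities $s_es_f^*=0$ (from Proposition \ref{source_range_projections} and $r(e)\cap r(f)=\emptyset$), $T_g^*T_g=\sum_{v\in r(e)\cup r(f)}p_v$, and $S_eS_e^*+S_fS_f^*=t_gt_g^*$ are precisely the operator-level content that the decomposition results encapsulate, and your mutual-inverse check, including $(s_e+s_f)\sum_{v\in r(e)}p_v=s_e$ via orthogonality of the range projections, goes through as stated.
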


\begin{proof}
    We apply Proposition \ref{rangesingularization} to $H\Gamma$ and $\tilde{H\Gamma}$. Both yield the same hypergraph, which gives the required isomorphism.     
\end{proof}

\section{\textbf{Non-nuclear hypergraph $C^*$-algebras}}
\label{sec:nonnuclear}

In this section, we  show that the class of hypergraph $C^*$-algebras is strictly larger than the class of graph $C^*$-algebras. All graph $C^*$-algebras are nuclear and even all ultragraph $C^*$-algebras are nuclear as each ultragraph $C^*$-algebra is Morita equivalent to a graph $C^*$-algebra, see Prop. \ref{MoritaUltra}. For hypergraph $C^*$-algebras this is not the case  as we will see in the following. In analogy to the group case, we introduce the following notion.

\begin{definition}
    We call a hypergraph \emph{amenable}, if the corresponding hypergraph $C^*$-algebra is nuclear.
\end{definition}

\subsection{Basic examples of non-amenable hypergraphs}

In the following, we view the algebra $C(S^1)$ of continuous functions on the circle as the universal $C^*$-algebra $C(S^1)\cong C^*(u,1\vert uu^*=1=u^*u)$ and $\mathbb{C}^n\cong C^*(p_i, i=1,...,n \vert p_i^*=p_i=p_i^2 ; \sum_{i=1}^np_i=1)$. The unital (full) free product of these two $C^*$-algebras is then given by the universal $C^*$-algebra generated by a unitary $u$ and $n$ projections $p_1, \ldots,p_n$ summing up to one; the units are identified.

Note that we have
\[C(S^1)*\mathbb C^n\cong C^*(\mathbb Z)*C^*(\mathbb Z/n\mathbb Z)\cong C^*(\mathbb Z * (\mathbb Z/n\mathbb Z)),\]
and thus, this $C^*$-algebra is not nuclear (\cite[Thm. 2.6.8]{brown2008calgebras}) as long as $n\geq 2$, since $\mathbb Z * (\mathbb Z/n\mathbb Z)$ is not amenable; the latter group contains the free group on two generators, see \cite[Lemma 3.1.7]{Weber_DA}, which is non-amenable, and we  use that  closed subgroups of amenable, compact groups are amenable, \cite[Thm. 1.2.7]{amenable_banach_algebras}.

\begin{proposition}
\label{productgraph}
Let $n\in\mathbb{N}$ and consider the hypergraph $\tilde{H\Gamma_n}$ with vertices $\{v_1,\dots,v_n\}$, edges $\{e_1\}$ and  $r(e_1)=\{v_1,\dots,v_n\}, \ s(e_1)=\{v_1,\dots,v_n\}$.   We have $C^*(\tilde{H\Gamma_n})\cong C(S^1)*_\mathbb{C}\mathbb{C}^n$. In particular, $C^*(\tilde{H\Gamma_n})$ is not nuclear and $\tilde{H\Gamma}_n$ is non-amenable as soon as $n\geq 2$.
\begin{figure}[h!]
        \centering
            \begin{center}
    \begin{minipage}{\linewidth}
    \centering
        \begin{tikzpicture}[ > = stealth, auto, thick]
            \node (v1) at (1.5,1.5) {$v_1$};
            \node (v2) at (3,1) {$v_2$};
            \node (v3) at (3,-1) {$\dots$};
            \node (v4) at (1.5,-1.5) {$v_{n-1}$};
            \node (v5) at (0,0) {$v_n$};
            \path[-] (v1) edge (v2); 
            \path[-] (v1) edge (v3); 
            \path[-] (v1) edge (v4); 
            \path[-] (v1) edge (v5); 
            \path[-] (v2) edge (v3); 
            \path[-] (v2) edge (v4); 
            \path[-] (v2) edge (v5); 
            \path[-] (v3) edge (v4); 
            \path[-] (v3) edge (v5); 
            \path[-] (v4) edge (v5); 
        \end{tikzpicture}
    \end{minipage}
\end{center}
        \caption[Hypergraph $\tilde{H\Gamma_n}$ generating $C(S^1)*\C^n$.]{Hypergraph $\tilde{H\Gamma_n}$ generating $C(S^1)*\C^n$. To simplify the visualization of the hypergraphs, we omit the arrowheads for edges that point in both directions.}  
    \end{figure}
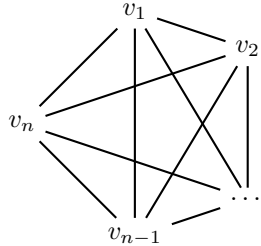
\end{proposition}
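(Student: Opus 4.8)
The plan is to observe that, for this particular hypergraph, the hypergraph relations force the single partial isometry $s_{e_1}$ to be a \emph{unitary}, after which the presentation of $C^*(\tilde{H\Gamma_n})$ collapses to exactly that of the free product. First I would invoke Proposition \ref{unit} to obtain $\sum_{i=1}^n p_{v_i}=1$. Since $r(e_1)=s(e_1)=\{v_1,\dots,v_n\}$, Relation (HR1) then reads $s_{e_1}^*s_{e_1}=\sum_{i=1}^n p_{v_i}=1$, so $s_{e_1}$ is an isometry. Next, Relation (HR2b) gives $p_{v_i}\leq s_{e_1}s_{e_1}^*$ for every $i$ (as $e_1$ is the only edge and $v_i\in s(e_1)$); summing over $i$ and combining, via Remark \ref{orderrelation}, with (HR2a), which yields $s_{e_1}s_{e_1}^*\leq\sum_{i=1}^n p_{v_i}=1$, produces $s_{e_1}s_{e_1}^*=1$. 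Hence $u:=s_{e_1}$ is unitary.

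The key point is then that no further relation links $u$ to the projections $p_{v_i}$: all three hypergraph relations hold automatically once $u$ is unitary and the $p_{v_i}$ are mutually orthogonal projections summing to $1$. This is precisely the defining presentation of the unital free product $C(S^1)*_{\mathbb{C}}\mathbb{C}^n$, under the identifications $C(S^1)\cong C^*(u\mid u\text{ unitary})$ and $\mathbb{C}^n\cong C^*(p_i\mid p_i^*=p_i=p_i^2,\ \sum_i p_i=1)$ recalled just before the statement.

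I would therefore build two mutually inverse $*$-homomorphisms using the universal property. In one direction, the elements $u$ and $p_{v_1},\dots,p_{v_n}$ of $C^*(\tilde{H\Gamma_n})$ satisfy the free-product relations, giving $\phi:C(S^1)*_{\mathbb{C}}\mathbb{C}^n\to C^*(\tilde{H\Gamma_n})$. In the other direction, taking $S_{e_1}$ to be the generating unitary and $P_{v_i}$ the generating projections of the free product, a direct check that (HR1), (HR2a) and (HR2b) hold (each reducing to the single fact $UU^*=U^*U=1=\sum_i P_i$) yields $\psi$ in the reverse direction. Since $\phi$ and $\psi$ are mutually inverse on generators, they are inverse isomorphisms, establishing $C^*(\tilde{H\Gamma_n})\cong C(S^1)*_{\mathbb{C}}\mathbb{C}^n$.

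Finally, the ``in particular'' assertions require no new work: they follow verbatim from the discussion preceding the proposition, where $C(S^1)*\mathbb{C}^n\cong C^*(\mathbb{Z}*(\mathbb{Z}/n\mathbb{Z}))$ is identified and shown to be non-nuclear for $n\geq 2$ (its group containing a copy of the non-amenable free group), whence $\tilde{H\Gamma}_n$ is non-amenable by definition. I do not expect any genuine obstacle here; the entire content is the upgrade of $s_{e_1}$ to a unitary via Proposition \ref{unit}, and the only point demanding slight care is the direction of the inequalities in (HR2a) and (HR2b) when they are combined through Remark \ref{orderrelation}.
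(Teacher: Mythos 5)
Your proposal is correct and follows essentially the same route as the paper's own proof: use Proposition \ref{unit} and (HR1) to make $s_{e_1}$ an isometry, combine (HR2a) and (HR2b) (via Remark \ref{orderrelation}) to get $s_{e_1}s_{e_1}^*=1$, and then produce mutually inverse *-homomorphisms from the universal properties. No gaps; the "in particular" part is indeed just the non-nuclearity discussion preceding the proposition.
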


\begin{proof}
For the hypergraph $C^*$-algebra $C^*(\tilde{H\Gamma_n})$ we obtain $s_{e_1}^*s_{e_1}=\sum_{i=1}^n p_{v_i}=1$ by Relation (HR1) and Prop. \ref{unit}.  Further, by Relations (HR2a) and (HR2b) we have $s_{e_1}s_{e_1}^*\leq \sum_{i=1}^n p_{v_i}=1$ and $s_{e_1}s_{e_1}^*\geq p_{v_i}$ for all $i=1,...,n$. This implies $s_{e_1}s_{e_1}^*=\sum_{i=1}^np_{v_i}=1$. Hence, $s_{e_1}$ is a unitary and we  obtain a *-homomorphism  $\phi: C(S^1)*_\mathbb{C} \mathbb{C}^n\rightarrow C^*(\tilde{H\Gamma_n})$, sending $u$ to $s_{e_1}$ and $p_i$ to $p_{v_i}$ for all $i=1,\dots,n$. It is an isomorphism, since conversely $u, p_1, \ldots, p_n\in C(S^1)*\mathbb C^n$ satisfy the relations of $C^*(\tilde{H\Gamma_n})$.
\end{proof}

We now extend the above example to a hypergraph with $m\geq 2$ edges.

Recall the definition of the Cuntz algebra $\mathcal{O}_m=C^*(S_1,\dots,S_m \ \vert \ S_i^*S_i=1 \ \text{for all} \ i=1,\dots,m; \sum_{i=1}^m S_iS_i^*=1)$, see \cite{Cuntz}.

\begin{proposition} \label{productgraph2}
Let $n,m\in\mathbb{N}$ with $m\geq 2$. Consider the hypergraph $H\Gamma$ with 
vertices $\{v_1,...,v_n\}$, edges $\{e_1,...,e_m\}$ and $r(e_i)=\{v_1,...,v_n\}, \ s(e_i)=\{v_1,...,v_n\}$ for all $i=1,...,m$. We have $C^*(H\Gamma)\cong \mathcal{O}_m*_\mathbb{C}\mathbb{C}^n$ and hence $H\Gamma$ is non-amenable, if $n\geq 2$.
\end{proposition}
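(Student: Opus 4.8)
The plan is to follow exactly the same two-sided universal-property argument that was used in Proposition \ref{productgraph}, now allowing $m$ edges. First I would compute the relations satisfied by the generators of $C^*(H\Gamma)$. By (HR1) together with Lemma \ref{unit} we get, for all $i,j$,
\begin{align*}
    s_{e_i}^*s_{e_j}=\delta_{ij}\sum_{k=1}^n p_{v_k}=\delta_{ij},
\end{align*}
so each $s_{e_i}$ is an isometry and distinct edges have orthogonal ranges. Next, applying (HR2a) and (HR2b) exactly as before, for each fixed $i$ we have $s_{e_i}s_{e_i}^*\leq \sum_{k=1}^n p_{v_k}=1$ while $s_{e_i}s_{e_i}^*\geq p_{v_k}$ for every $k$; but here I must be more careful than in the single-edge case, because with several edges the inequality $p_{v_k}\leq \sum_{i}s_{e_i}s_{e_i}^*$ from (HR2b) no longer forces each individual $s_{e_i}s_{e_i}^*$ to equal $1$.

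The key computation is therefore to extract $\sum_{i=1}^m s_{e_i}s_{e_i}^*=1$ rather than a statement about single edges. Summing (HR2b) over all vertices and using Remark \ref{orderrelation} (each $s_{e_i}s_{e_i}^*\leq 1$ by (HR2a), and $1=\sum_k p_{v_k}\leq \sum_i s_{e_i}s_{e_i}^*$ by (HR2b)), I obtain the Cuntz relation $\sum_{i=1}^m s_{e_i}s_{e_i}^*=1$. Combined with the isometry relations $s_{e_i}^*s_{e_i}=1$, this shows the $s_{e_i}$ form a family of Cuntz isometries, so by the universal property of $\mathcal O_m$ and the universal property of the free product $\mathcal O_m*_\mathbb C\mathbb C^n$, the assignment $S_i\mapsto s_{e_i}$, $p_i\mapsto p_{v_i}$ defines a $*$-homomorphism $\phi:\mathcal O_m*_\mathbb C\mathbb C^n\to C^*(H\Gamma)$.

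For the converse direction I would verify that the elements $S_i\in\mathcal O_m$ (the canonical Cuntz isometries) and the projections $p_{v_i}$, obtained from a partition of unity by $n$ orthogonal projections in the free product factor $\mathbb C^n$, satisfy the hypergraph relations (HR1), (HR2a), (HR2b): (HR1) follows from $S_i^*S_j=\delta_{ij}=\delta_{ij}\sum_k p_{v_k}$; (HR2a) and (HR2b) follow since $S_iS_i^*\leq 1=\sum_k p_{v_k}$ and $p_{v_k}\leq 1=\sum_i S_iS_i^*$. This yields an inverse $*$-homomorphism $\psi$, and checking $\phi\circ\psi=\id$ and $\psi\circ\phi=\id$ on generators gives $C^*(H\Gamma)\cong \mathcal O_m*_\mathbb C\mathbb C^n$. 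Finally, non-amenability for $n\geq 2$ follows because the free product $\mathcal O_m*_\mathbb C\mathbb C^n$ contains a non-nuclear subalgebra; concretely $\mathbb C^m*_\mathbb C\mathbb C^n\cong C^*(\mathbb Z/m\mathbb Z * \mathbb Z/n\mathbb Z)$ sits inside it (via the range projections $S_iS_i^*$ on one side), and for $n\geq 2$, $m\geq 2$ this group contains a nonabelian free group and is hence non-amenable, so the free product is not nuclear by the same reasoning as in the discussion preceding Proposition \ref{productgraph}. The main obstacle I anticipate is the correct derivation of the Cuntz relation $\sum_i s_{e_i}s_{e_i}^*=1$ from (HR2b): unlike the $m=1$ case one cannot conclude anything about individual $s_{e_i}s_{e_i}^*$, and one must instead sum over vertices and invoke Remark \ref{orderrelation} to pin down the total sum.
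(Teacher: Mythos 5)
Your proof of the isomorphism $C^*(H\Gamma)\cong\mathcal{O}_m*_{\mathbb{C}}\mathbb{C}^n$ is correct and follows the paper's own route: (HR1) plus Prop.~\ref{unit} make the $s_{e_i}$ isometries with mutually orthogonal ranges, (HR2a)/(HR2b) together with Remark~\ref{orderrelation} give $\sum_{i=1}^m s_{e_i}s_{e_i}^*=1$ (the paper compresses this into the chain $1=\sum_i p_{v_i}\leq\sum_j s_{e_j}s_{e_j}^*\leq 1$), and the two universal-property maps are checked to be mutually inverse on generators. Your explicit warning that (HR2b) only controls the sum of the range projections, not each one individually, is exactly the right reading of that step.

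The genuine gap is in your justification of non-amenability. (The paper's proof is silent on this point, implicitly taking the non-nuclearity of $\mathcal{O}_m*_{\mathbb{C}}\mathbb{C}^n$ as known; but your proposed argument is invalid rather than merely omitted.) The inference ``it contains a non-nuclear subalgebra, hence it is non-nuclear'' is false: nuclearity does not pass to subalgebras. By Kirchberg's embedding theorem, every separable exact unital $C^*$-algebra --- for instance $C^*_r(F_2)$, which is not nuclear --- embeds unitally into the nuclear algebra $\mathcal{O}_2$, so exhibiting a non-nuclear subalgebra proves nothing. The property that does pass to subalgebras is exactness, so the correct strategy is to embed a non-exact algebra such as the full $C^*(F_2)$ (non-exact by Wassermann) and use that nuclear implies exact. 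Moreover, your particular choice of subalgebra fails in a case the proposition covers: for $m=n=2$ one has $\mathbb{C}^2*_{\mathbb{C}}\mathbb{C}^2\cong C^*(\mathbb{Z}/2\mathbb{Z}*\mathbb{Z}/2\mathbb{Z})$, the group $C^*$-algebra of the infinite dihedral group, which is amenable and contains no nonabelian free subgroup; so there your subalgebra is nuclear and carries no obstruction at all, contradicting your claim that the group contains a free group whenever $m,n\geq 2$. (A smaller point you assert without proof: that the projections $S_iS_i^*$ together with the $p_j$ generate the full free product $\mathbb{C}^m*_{\mathbb{C}}\mathbb{C}^n$ rather than a proper quotient of it; injectivity of the canonical surjection here is a nontrivial embedding theorem for unital full free products, not automatic.) A repair valid for all $m,n\geq 2$: $\mathcal{O}_m$ contains a unitary with spectrum all of $S^1$ (e.g.\ exponentiate a self-adjoint element of the UHF core with spectrum $[0,1]$), hence a unital copy of $C(S^1)$; the embedding theorem for unital full free products then places $C(S^1)*_{\mathbb{C}}\mathbb{C}^n\cong C^*(\mathbb{Z}*\mathbb{Z}/n\mathbb{Z})$ inside $\mathcal{O}_m*_{\mathbb{C}}\mathbb{C}^n$, and since $\mathbb{Z}*\mathbb{Z}/n\mathbb{Z}$ contains $F_2$ for every $n\geq 2$ (as used before Prop.~\ref{productgraph}), one obtains a copy of the non-exact $C^*(F_2)$; non-exactness then rules out nuclearity in all cases.
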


\begin{proof}
Using the relations of the associated hypergraph $C^*$-algebra $C^*(H\Gamma)$, we obtain
\begin{align*}
    &s_{e_i}^*s_{e_j}=\delta_{ij}\sum_{k=1}^n p_{v_k}=\delta_{ij} \ \text{for all} \ i,j=1,...,m\\
    &s_{e_i}s_{e_i}^*\leq \sum_{j=1}^n p_{v_j}=1 \ \text{for all} \ i=1,...,m\\
    &p_{v_i} \leq \sum_{j=1}^ms_{e_j}s_{e_j}^*\ \text{for all} \ i=1,...,n.\\ 
\end{align*}
Hence, the elements $s_{e_i}$ are isometries with $1=\sum p_{v_i}\leq\sum_j s_{e_j}s_{e_j}^*\leq 1$ and we obtain a *-homomorphism
$\phi:\mathcal{O}_m*_\mathbb{C}\mathbb{C}^n\rightarrow C^*(H\Gamma)$ that sends $S_i$ to $s_{e_i}$ for all $i=1,...,m$ and $p_i$ to $p_{v_i}$ for all $i=1,...,n$.
Conversely, the Relations (HR1)-(HR2b) are satisfied in $\mathcal{O}_m*_\mathbb{C}\mathbb{C}^n$, and we obtain a *-homomorphism that is inverse to $\phi$.
\end{proof}

\subsection{Construction of non-amenable hypergraphs}

We can now use the above non-amenable hypergraph $\tilde{H\Gamma}_n$ from Prop. \ref{productgraph} to construct further non-amenable hypergraphs. This can be achieved by extending the hypergraph $\tilde{H\Gamma}_n$ appropriately. The canonical generators of  $C^*(\tilde{H\Gamma}_n)$ are in the following denoted by $\{t_f\}\cup\{q_{v_1},\dots, q_{v_n}\}$.
The crucial idea is then to use that nuclearity transfers to quotients \cite[Cor. IV.3.1.13]{blackadar_operator}.

\begin{proposition}\label{construction_nonnuclear_1}
    Let $n\geq 3$. Let $H\Gamma$ be the hypergraph defined by $V=\{w,v_1,\dots,v_n\}$ and $E=\{e,f\}$ with 
        \begin{align*}
            s(e)&=\{w\},\qquad \qquad \;\;\; r(e)=\{v_n\},\\
            s(f)&=\{v_1,\dots, v_n\}, \qquad r(f)=\{v_1,\dots, v_n\}.
        \end{align*}
    \begin{figure}[h!]
    \centering
         \begin{center}
    \begin{minipage}{\linewidth}
    \centering
        \begin{tikzpicture}[ > = stealth, auto, thick]
            \node (v0) at (0,0) {$w$};
            \node (v1) at (3.5,1.5) {$v_1$};
            \node (v2) at (5,1) {$v_2$};
            \node (v3) at (5,-1) {$\dots$};
            \node (v4) at (3.5,-1.5) {$v_{n-1}$};
            \node (v5) at (2,0) {$v_n$};
            \path[->, blue] (v0) edge (v5); 
            \node (e0) at (1,0.25) {$e$};
            \path[-] (v1) edge (v2); 
            \path[-] (v1) edge (v3); 
            \path[-] (v1) edge (v4); 
            \path[-] (v1) edge (v5); 
            \path[-] (v2) edge (v3); 
            \path[-] (v2) edge (v4); 
            \path[-] (v2) edge (v5); 
            \path[-] (v3) edge (v4); 
            \path[-] (v3) edge (v5);  
            \path[-] (v4) edge (v5); 
        \end{tikzpicture}
    \end{minipage}
\end{center}
\end{figure}

    Then $H\Gamma$ is non-amenable.
\end{proposition}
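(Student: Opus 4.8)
The plan is to exhibit a non-nuclear quotient of $C^*(H\Gamma)$ and then invoke the fact that nuclearity passes to quotients (\cite[Cor. IV.3.1.13]{blackadar_operator}). First I would read off the relations forced by the hypergraph. Since $w$ emits only the edge $e$, combining (HR2a) and (HR2b) gives $s_es_e^*=p_w$, while (HR1) gives $s_e^*s_e=p_{v_n}$; hence $p_w$ and $p_{v_n}$ are Murray--von Neumann equivalent via $s_e$. Likewise, since each $v_i$ emits only $f$, relations (HR2a), (HR2b) and Remark \ref{orderrelation} force $s_fs_f^*=\sum_{i=1}^n p_{v_i}=1-p_w$, while (HR1) gives $s_f^*s_f=\sum_{i=1}^n p_{v_i}$, so that $s_f$ is a unitary in the corner $(1-p_w)C^*(H\Gamma)(1-p_w)$.

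The key observation is that collapsing $p_w$ to zero also collapses $p_{v_n}$ (because $p_{v_n}=s_e^*p_ws_e$ lies in the ideal generated by $p_w$) and forces $s_e=0$, leaving behind a copy of the complete-type hypergraph on the remaining vertices $v_1,\dots,v_{n-1}$. Rather than manipulate the ideal directly, I would realize this quotient explicitly. Let $u,q_1,\dots,q_{n-1}$ be the canonical generators of $C(S^1)*_{\mathbb C}\mathbb C^{n-1}$, that is, a unitary together with $n-1$ projections summing to $1$, and define a candidate Cuntz--Krieger $H\Gamma$-family by $P_w:=0$, $P_{v_n}:=0$, $P_{v_i}:=q_i$ for $i=1,\dots,n-1$, $S_e:=0$ and $S_f:=u$.

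I would then check that this family satisfies (HR1), (HR2a) and (HR2b): the relations involving $e$ hold trivially because $S_e=0$ and $P_{v_n}=0$ (this is precisely where killing the extra vertex is used), while those involving $f$ reduce to the statements that $u$ is unitary and $\sum_{i=1}^{n-1}q_i=1$. By the universal property of $C^*(H\Gamma)$ this yields a $*$-homomorphism $\pi\colon C^*(H\Gamma)\to C(S^1)*_{\mathbb C}\mathbb C^{n-1}$, which is surjective since its image contains $u$ and all the $q_i$. Finally, because $n\geq 3$ we have $n-1\geq 2$, so $C(S^1)*_{\mathbb C}\mathbb C^{n-1}\cong C^*(\mathbb Z*(\mathbb Z/(n-1)\mathbb Z))$ is non-nuclear by the discussion preceding Prop. \ref{productgraph}; as nuclearity transfers to quotients, $C^*(H\Gamma)$ cannot be nuclear, i.e. $H\Gamma$ is non-amenable.

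The main point to watch, and the reason the hypothesis reads $n\geq 3$ rather than $n\geq 2$, is that the edge $e$ is not a harmless decoration: collapsing $p_w$ unavoidably kills $p_{v_n}$ as well, so the surviving free product is $\mathbb C^{n-1}$ and not $\mathbb C^{n}$. Everything else is a routine verification of the hypergraph relations for the explicit family above.
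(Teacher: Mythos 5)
Your proposal is correct and follows essentially the same route as the paper's own proof: the paper also defines the Cuntz--Krieger $H\Gamma$-family $P_w=P_{v_n}=0$, $S_e=0$, $P_{v_i}=q_{v_i}$, $S_f=t_f$ inside $C^*(\tilde{H\Gamma}_{n-1})\cong C(S^1)*_{\mathbb C}\mathbb C^{n-1}$, obtains the resulting surjection, and concludes non-nuclearity since nuclearity passes to quotients. Your explicit verification of the relations and the remark explaining why $p_{v_n}$ must also vanish (hence $n\geq 3$) simply spell out details the paper leaves as ``straightforward calculations.''
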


\begin{proof}
    A Cuntz-Krieger $H\Gamma$-family in $C^*(\tilde {H\Gamma}_{n-1})$ is defined by 
        \begin{align*}
            &P_w :=0,\\
            &P_{v_n}:=0,\\
            &P_{v_i}:= q_{v_i} \qquad \text{for} \; i\leq n-1,\\
            &S_e:=0,\\
            &S_f:=t_f,
        \end{align*}
    which follows by straightforward calculations.
    This leads to a surjective *-homomorphism $\phi:C^*(H\Gamma) \rightarrow C^*(\tilde {H\Gamma}_{n-1})$ 
    By Proposition \ref{productgraph} we know that the $C^*$-algebra $C^*(\tilde {H\Gamma}_{n-1})$ is non nuclear. Thus, $C^*(H\Gamma)$ has a non-nuclear quotient and is thus also non-nuclear. 
\end{proof}

\begin{remark}
There are multiple ways to  create further non-amenable hypergraphs with the above technique. We could add multiple vertices to the source/range of $e$, attach more edges to the hypergraph, and so on. The main idea is always to set the partial isometry corresponding to the new edge equal to zero and use the hypergraph relations to determine which projections must be zero. Then we consider the hypergraph $\tilde{H\Gamma}_m$ with edge $f$, were we delete all vertices, whose projection is zero, from the range and source. The resulting Cuntz-Krieger family leads to a surjective *-homomorphism onto a non-nuclear $C^*$-algebra $C^*(\tilde{H\Gamma}_m)$. We add further such examples in Appendix \ref{Appendix_Non_Amenable_hypergraphs}.
\end{remark}
Nevertheless, the above illustration is somewhat misleading. The examples originate from manipulations of the non-amenable hypergraph. Thus, at first glance, the non-amenable subhypergraph $\tilde{H\Gamma}_n$ seems to be crucial. But in fact the quotient given by $C^*(\tilde{H\Gamma}_m)$ for some $m <n$ is crucial, as highlighted in the following figure:

\begin{figure}[h!]
    \centering
        \begin{center}
    \begin{minipage}{\linewidth}
    \centering
        \begin{tikzpicture}[ > = stealth, auto, thick]
            \node (v0) at (0,0) {$w$};
            \node (v1) at (3.5,1.5) {$\textcolor{blue}{v_1}$};
            \node (v2) at (5,1) {$\textcolor{blue}{v_2}$};
            \node (v3) at (5,-1) {$\textcolor{blue}{\dots}$};
            \node (v4) at (3.5,-1.5) {$\textcolor{blue}{v_{n-1}}$};
            \node (v5) at (2,0) {$v_n$};
            \path[->] (v0) edge (v5); 
            \node (e0) at (1,0.25) {$e$};
            \path[-, blue, line width=1.5pt] (v1) edge (v2); 
            \path[-, blue, line width=1.5pt] (v1) edge (v3); 
            \path[-, blue, line width=1.5pt] (v1) edge (v4); 
            \path[-] (v1) edge (v5); 
            \path[-, blue, line width=1.5pt] (v2) edge (v3); 
            \path[-, blue, line width=1.5pt] (v2) edge (v4); 
            \path[-] (v2) edge (v5); 
            \path[-, blue, line width=1.5pt] (v3) edge (v4); 
            \path[-] (v3) edge (v5); ; 
            \path[-] (v4) edge (v5); 
        \end{tikzpicture}
    \end{minipage}
\end{center}
\end{figure}

To get a better graphical understanding we express the above technique by concrete requirements on the hypergraph. The idea is to extract a non-amenable part of the hypergraph with slight modifications on the edges by deleting vertices from its source and range. This gives an easy way to check non-amenability for a given hypergraph without using the corresponding $C^*$-algebra.

\begin{proposition}
    Let $H\Gamma=(V, E, r,s)$ be a finite hypergraph. Assume there exist $\tilde V\subseteq V$ and $\tilde E\subseteq E$ such that $\tilde V\cap r(e)\neq \emptyset$ and $\tilde V \cap s(e)\neq \emptyset$ holds if and only if $e\in \tilde E$. Let  $\tilde{H\Gamma}=(\tilde V, \tilde E, \tilde r, \tilde s)$ be the hypergraph with
        \begin{align*}
            &\tilde r(e):=r(e)\cap \tilde V,\\
            &\tilde s(e):=s(e)\cap \tilde V.
        \end{align*}
    Then $C^*(\tilde{H\Gamma})$ is a quotient of $C^*(H\Gamma)$. In particular, if $\tilde{H\Gamma}$
    is non-amenable, then $H\Gamma$ is non-amenable, since nuclearity passes to quotients.
\end{proposition}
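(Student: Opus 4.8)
The plan is to exhibit a surjective $*$-homomorphism $\phi: C^*(H\Gamma) \to C^*(\tilde{H\Gamma})$ by producing a Cuntz-Krieger $\tilde{H\Gamma}$-family inside $C^*(H\Gamma)$ and invoking the universal property, exactly as in Proposition \ref{construction_nonnuclear_1} but now in full generality. Once $C^*(\tilde{H\Gamma})$ is realized as a quotient, the non-amenability conclusion is immediate from the fact that nuclearity passes to quotients (\cite[Cor. IV.3.1.13]{blackadar_operator}), since a nuclear $C^*(H\Gamma)$ would force its quotient $C^*(\tilde{H\Gamma})$ to be nuclear.

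\textbf{Defining the candidate family.}
Let $\{p_v, s_e\}$ be the universal Cuntz-Krieger $H\Gamma$-family. The natural candidates are
\begin{align*}
    Q_v &:= p_v \qquad \text{for } v \in \tilde V, \\
    T_e &:= s_e \qquad \text{for } e \in \tilde E.
\end{align*}
First I would observe that the $Q_v$ are mutually orthogonal projections (as a subfamily of the $p_v$) and each $T_e$ is a partial isometry. The heart of the argument is checking that these satisfy the three hypergraph relations of $C^*(\tilde{H\Gamma})$, where the key point is that $\tilde r(e) = r(e) \cap \tilde V$ and $\tilde s(e) = s(e) \cap \tilde V$ are precisely the range and source restricted to the retained vertices, and that by the defining hypothesis the edges of $\tilde E$ are exactly those edges meeting $\tilde V$ in both range and source.

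\textbf{Verifying the relations.}
For (HR1), I would use Proposition \ref{source_range_projections}: since $s_e = s_e\left(\sum_{v \in r(e)} p_v\right)$, multiplying by the projection $\sum_{v\in \tilde V} p_v$ and using orthogonality shows $T_e^* T_f = s_e^* s_f = \delta_{ef}\sum_{v \in r(e)} p_v$, which upon restriction equals $\delta_{ef}\sum_{v \in \tilde r(e)} p_v$ — but here lies the first subtlety: the relation (HR1) in $C^*(\tilde{H\Gamma})$ demands $\sum_{v \in \tilde r(e)} Q_v$, so I must show $s_e^* s_e = \sum_{v \in r(e)} p_v$ collapses correctly, i.e. that $T_e$ as defined has the right initial projection supported on $\tilde V$. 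This requires cutting down by $\sum_{v\in\tilde V}p_v$ consistently, and I expect the cleanest route is to replace $T_e := s_e$ by $T_e := \left(\sum_{v\in\tilde V}p_v\right) s_e \left(\sum_{v\in\tilde V}p_v\right)$ to absorb the vertices outside $\tilde V$. For (HR2a), $T_e T_e^* \leq s_e s_e^* \leq \sum_{v \in s(e)} p_v$, and I would argue the support lands in $\tilde s(e)$ after the cut-down. For (HR2b), I would invoke the original (HR2b) for $v \in \tilde V$ together with the hypothesis that any edge contributing to $v$'s source either lies in $\tilde E$ or contributes nothing after cutting by $\tilde V$.

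\textbf{The main obstacle.}
The hard part will be (HR2b) and the precise bookkeeping of the cut-downs. The defining hypothesis guarantees that an edge $e$ with $v \in \tilde V \cap s(e)$ automatically satisfies $e \in \tilde E$, so the sum $\sum_{e \in \tilde E, v \in \tilde s(e)} T_e T_e^*$ should recover all the terms in $\sum_{e \in E, v \in s(e)} s_e s_e^*$ that survive the projection onto $\tilde V$; the delicate point is verifying that the edges \emph{not} in $\tilde E$ (i.e. those meeting $\tilde V$ in at most one of source/range) contribute nothing relevant after conjugating by $\sum_{v\in\tilde V}p_v$, so that the inequality $Q_v \leq \sum_{e\in\tilde E, v\in\tilde s(e)} T_eT_e^*$ genuinely follows from the ambient (HR2b). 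I anticipate this is the only place where the full strength of the biconditional hypothesis on $\tilde E$ is used, and getting the orthogonality relations to line up cleanly is the crux. Once all three relations are checked, the universal property of $C^*(\tilde{H\Gamma})$ yields the desired surjection and the proof concludes as sketched.
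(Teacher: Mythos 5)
Your proposal has the universal-property argument running in the wrong direction, and this is fatal. To show that $C^*(\tilde{H\Gamma})$ is a \emph{quotient} of $C^*(H\Gamma)$ you need a surjective *-homomorphism $C^*(H\Gamma)\to C^*(\tilde{H\Gamma})$, and by the universal property of $C^*(H\Gamma)$ this requires a Cuntz-Krieger \emph{$H\Gamma$}-family inside the target algebra $C^*(\tilde{H\Gamma})$. You instead propose to build a Cuntz-Krieger $\tilde{H\Gamma}$-family inside $C^*(H\Gamma)$; the universal property then only yields a map $C^*(\tilde{H\Gamma})\to C^*(H\Gamma)$, i.e. into the big algebra, which says nothing about quotients. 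This also breaks the nuclearity conclusion: nuclearity passes to quotients but not to arbitrary subalgebras, so a map $C^*(\tilde{H\Gamma})\to C^*(H\Gamma)$ (injective or not) cannot be used to conclude that non-amenability of $\tilde{H\Gamma}$ forces non-amenability of $H\Gamma$. Note that in Proposition \ref{construction_nonnuclear_1}, which you cite as your model, the family constructed is an $H\Gamma$-family (for the large hypergraph) living in the small algebra $C^*(\tilde{H\Gamma}_{n-1})$ — exactly the opposite of what you do.

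Moreover, even the family you try to build does not exist: the elements $Q_v:=p_v$, $T_e:=s_e$ (or their cut-downs by $q:=\sum_{v\in\tilde V}p_v$) genuinely fail the $\tilde{H\Gamma}$-relations inside $C^*(H\Gamma)$. For (HR1), $s_e^*s_e=\sum_{v\in r(e)}p_v$ contains the projections $p_v$ with $v\in r(e)\setminus\tilde V$, which are nonzero in $C^*(H\Gamma)$, and no amount of cutting down fixes this since $q s_e q$ is in general no longer a partial isometry with the required initial projection. For (HR2b) the situation is worse: the ambient relation gives $p_v\leq\sum_{e\in E,\,v\in s(e)}s_es_e^*$, and to obtain $Q_v\leq\sum_{e\in\tilde E,\,v\in\tilde s(e)}T_eT_e^*$ you would have to \emph{discard} the summands coming from edges $e\notin\tilde E$ with $v\in s(e)$; discarding positive terms from the right-hand side of an inequality only weakens it, and in $C^*(H\Gamma)$ those discarded $s_es_e^*$ are nonzero, so the restricted inequality simply need not hold. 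The paper's proof avoids both problems at once: it defines the $H\Gamma$-family in $C^*(\tilde{H\Gamma})$ by $P_v:=q_v$ for $v\in\tilde V$, $P_v:=0$ otherwise, $S_e:=t_e$ for $e\in\tilde E$, $S_e:=0$ otherwise (where $\{q_v,t_e\}$ generate $C^*(\tilde{H\Gamma})$); the hypothesis on $\tilde E$ guarantees that setting the deleted generators to zero is consistent with all three relations, and the resulting *-homomorphism $C^*(H\Gamma)\to C^*(\tilde{H\Gamma})$ is surjective because it hits all generators.
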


\begin{proof}
     We can define a Cuntz-Krieger $H\Gamma$-family in $C^*(\tilde{H\Gamma})$ by
        \begin{align*}
            P_v&:=\begin{cases}  q_v &\text{for} \; v \in \tilde V\\0 & \text{for} \; v \in V\setminus \tilde V,\end{cases}\\
            S_e&:=\begin{cases} t_e &\text{for}\; e \in \tilde E\\ 0 & \text{for} e \in E\setminus \tilde E.\end{cases}
        \end{align*}
    Since all edges whose sources and sinks intersect with $\tilde V$ lie in $\tilde E$, the hypergraph relations of the Cuntz-Krieger family  follow directly from the hypergraph relations in $C^*(\tilde{H\Gamma})$. Indeed, as the projections and partial isometries corresponding to vertices and edges not in $\tilde{H\Gamma}$ are 0, they can be added without changing any relations. By the universal property we obtain a *-homomorphism  from $C^*(H\Gamma)$ onto $C^*(\tilde{H\Gamma})$ which is surjective as all generators of $C^*(\tilde{H\Gamma})$ lie in its range.
\end{proof}

The previous constructions have the objective to check if a given hypergraphs is non-amenable by deleting and manipulating edges and vertices. The emerging question is now, how to attach a non-amenable hypergraph to an arbitrary hypergraph to receive a non-amenable hypergraph. The technique below defines some kind of product between two hypergraph $C^*$-algebras.

\begin{proposition}
    Let $H\Gamma=(V_\Gamma,E_\Gamma,r_\Gamma,s_\Gamma)$ and $H\Delta=(V_\Delta,E_\Delta,r_\Delta,s_\Delta)$ be finite hypergraphs. For fixed $f \in E_\Gamma$ and $w \in V_\Delta$ let the hypergraph $H\Theta$ be given by
        \begin{align*}
            &V_\Theta=V_\Gamma\cup V_\Delta,\\
            &E_\Theta=E_\Gamma \cup E_\Delta,\\
            &r_\Theta(e)=\begin{cases} r_\Gamma(e) & \text{for} \;e \in E_\Gamma\\ r_\Delta(e) & \text{for}\; e \in E_\Delta,\end{cases}\\
            &s_\Theta(e):=\begin{cases} s_\Gamma(e) & \text{for} \;e \in E_\Gamma\setminus \{f\}\\ s_\Gamma(f)\cup \{w\} & \text{for} \; e=f\\ r_\Delta(e) & \text{for}\; e \in E_\Delta.\end{cases}\\
        \end{align*}
    Then $C^*(H\Gamma)$ is a quotient of $C^*(H\Theta)$. In particular,
  if $H\Gamma$ is non-amenable, then $H\Theta$ is non-amenable.
\end{proposition}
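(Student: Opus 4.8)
The goal is to exhibit a surjective $*$-homomorphism $C^*(H\Theta)\to C^*(H\Gamma)$, so that $C^*(H\Gamma)$ is realized as a quotient of $C^*(H\Theta)$; nuclearity passing to quotients then gives the non-amenability claim. The natural strategy, in line with the preceding propositions, is to build a Cuntz--Krieger $H\Theta$-family \emph{inside} $C^*(H\Gamma)$ and invoke the universal property of $C^*(H\Theta)$. Let $\{p_v, s_e\}$ denote the canonical generators of $C^*(H\Gamma)$. The plan is to define $P_v := p_v$ and $S_e := s_e$ for the vertices $v\in V_\Gamma$ and edges $e\in E_\Gamma$, and to set $P_v := 0$ for all $v\in V_\Delta$ and $S_e := 0$ for all $e\in E_\Delta$. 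One must take care at the overlap, since $V_\Gamma$ and $V_\Delta$ need not be disjoint and $w\in V_\Delta$; the cleanest convention is to assume the two hypergraphs are vertex- and edge-disjoint (or to work with the disjoint union), so that the assignment above is unambiguous.

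First I would verify that the $\{P_v\}$ are mutually orthogonal projections and the $\{S_e\}$ partial isometries, which is immediate since they are either canonical generators of $C^*(H\Gamma)$ or zero. Then I would check the three hypergraph relations of $C^*(H\Theta)$ one at a time. For edges $e\in E_\Gamma\setminus\{f\}$, the relations of $H\Theta$ coincide with those of $H\Gamma$ since $r_\Theta$ and $s_\Theta$ agree with $r_\Gamma, s_\Gamma$ there, so (HR1), (HR2a) follow directly. For edges in $E_\Delta$ all the relevant generators are $0$, so every relation reads $0=0$; here one uses that setting a partial isometry to zero forces no constraint that could fail. The delicate checks concentrate on the modified edge $f$ and on the vertex $w$.

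The main obstacle is the edge $f$, whose $H\Theta$-source is enlarged to $s_\Gamma(f)\cup\{w\}$ while its range is unchanged. For (HR2a) of $H\Theta$ we need $S_f S_f^* \le \sum_{v\in s_\Gamma(f)\cup\{w\}} P_v$; since $P_w=0$, the right-hand side collapses to $\sum_{v\in s_\Gamma(f)} p_v$, and the inequality is exactly (HR2a) for $f$ in $C^*(H\Gamma)$. Relation (HR1) for $f$ is untouched because $r_\Theta(f)=r_\Gamma(f)$. The subtle point is (HR2b): for each non-sink $v$ of $H\Theta$ we need $P_v \le \sum_{e: v\in s_\Theta(e)} S_e S_e^*$. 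For $v=w$ this is vacuous since $P_w=0$; for $v\in V_\Gamma$ one must confirm that enlarging $s(f)$ by $w$ does not remove $f$ from the sum over edges emitting $v$ (it does not, as $s_\Gamma(f)\subseteq s_\Theta(f)$), and that no $E_\Delta$-edge is erroneously required (those contribute $0$). Thus the $H\Theta$-(HR2b) sum for $v\in V_\Gamma$ dominates the $H\Gamma$-(HR2b) sum, and the inequality is inherited.

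Having confirmed all relations, the universal property of $C^*(H\Theta)$ yields a $*$-homomorphism $\phi: C^*(H\Theta)\to C^*(H\Gamma)$ with $\phi(s_e)=S_e$ and $\phi(p_v)=P_v$. Surjectivity is clear, since every canonical generator $p_v$ and $s_e$ of $C^*(H\Gamma)$ (for $v\in V_\Gamma$, $e\in E_\Gamma$) lies in the range. Finally, if $H\Gamma$ is non-amenable then $C^*(H\Gamma)$ is non-nuclear; as it is a quotient of $C^*(H\Theta)$ and nuclearity passes to quotients by \cite[Cor. IV.3.1.13]{blackadar_operator}, $C^*(H\Theta)$ cannot be nuclear either, so $H\Theta$ is non-amenable.
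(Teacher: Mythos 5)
Your proposal is correct and follows essentially the same route as the paper's own proof: build a Cuntz--Krieger $H\Theta$-family inside $C^*(H\Gamma)$ by sending all generators attached to $H\Delta$ to zero and the rest to the canonical generators, check the relations (with the only delicate points at the linking edge $f$ and the vertex $w$, where $P_w=0$ makes (HR2a) and (HR2b) collapse to the $H\Gamma$-relations), then invoke the universal property, surjectivity, and the fact that nuclearity passes to quotients. Your explicit remark on taking the unions $V_\Gamma\cup V_\Delta$, $E_\Gamma\cup E_\Delta$ to be disjoint is a worthwhile clarification that the paper leaves implicit.
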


\begin{proof}
    We define a Cuntz-Krieger $H\Theta$-family in $C^*(H\Gamma)$ by letting all projections and partial isometries corresponding to vertices and edges in $H\Delta$ be zero and by identifying the elements corresponding to vertices and edges in $H\Gamma$ with the generators of $C^*(H\Gamma)$. The clue is, that by letting all elements corresponding to $C^*(H\Delta)$ be zero, we "delete" the new vertex in the source and obtain $H\Gamma$. We denote the elements in the constructed Cuntz-Krieger $H\Theta$-family by $T_e$ and $Q_v$. This is indeed a Cuntz-Krieger $H\Theta$-family. The crucial part is the linking edge $f$ and the vertex $w$.  If we consider the hypergraph relations for these we get: 
    \begin{align*}
        T_f^*T_f&=s_f^*s_f=p_{r_\Gamma(f)}=p_{r_\Theta(f)}=Q_{r_\Theta(f)},\\
        T_fT_f^*&=s_fs_f^*\leq p_{s_\Gamma(f)}=p_{s_\Gamma(f)}+0=Q_{s_\Gamma(f)}+Q_w=Q_{s_\Theta(f)},\\
        Q_w&=0\leq \sum_{e \in E_\Theta, w \in s_\Theta(e)}T_eT_e^*.
    \end{align*}
    The *-homomorphism  given by the universal property is clearly surjective, as all generators of $C^*(H\Gamma)$ are in the range.
\end{proof}

\begin{remark}
In the previous proposition, we added the vertex $w$ to the source of the edge $f$. Similarly we could have also added the vertex $w$ to the range of  $f$. In either cases the quotient deletes the further vertex in the source/range. Furthermore, we must not restrict ourselves to a single connection. Using the same idea of the proof we could extend to multiple linking edges and multiple new vertices in their sources/ranges. 
\end{remark}

\section{\textbf{Gauge-invariant uniqueness for hypergraph $C^*$-algebras}}
\label{sec:gauge}

We now take a look at  the Gauge-Invariant Uniqueness Theorem, which -- for graph $C^*$-algebras -- yields faithful representations.  For this class, it is one of the most important theorems  and it holds for all graph $C^*$-algebras \cite{ideal_structure_infinite_graphs}[Thm. 2.1] and all ultragraph $C^*$-algebras \cite{Tomforde}[Prop. 5.5]. However, this is not the case for hypergraph $C^*$-algebras, as we will see in this section. Nevertheless, under specific assumptions on the hypergraph, we can extend it, borrowing the Gauge-Invariant Uniqueness Theorem from graph $C^*$-algebras.
Let us first introduce gauge actions.

\begin{proposition}
    Let $H\Gamma=(V, E, r,s)$ be a finite hypergraph with universal Cuntz-Krieger $H\Gamma$-family $\{s_e,p_v\}$. Then there exists a continuous action of $\T$ on $C^*(H\Gamma)$ such that 
    \begin{align*} \label{gaugeaction}
        &\gamma_z(s_e)=zs_e \quad \forall e \in E, \hspace{30pt} \gamma_z(p_v)=p_v \quad \forall v\in V.
    \end{align*}
    The action is called \emph{gauge action}\index{Gauge! action}.
\end{proposition}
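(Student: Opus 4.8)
The plan is to invoke the universal property of $C^*(H\Gamma)$ to produce the gauge action. For each fixed $z \in \T$, I would first verify that the elements $\{z s_e, p_v\}$ again form a Cuntz-Krieger $H\Gamma$-family, and then appeal to the universal property to obtain a $*$-homomorphism $\gamma_z$. Finally I would check that $z \mapsto \gamma_z$ is a group action by automorphisms and that it is continuous in the point-norm topology.

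\begin{proof}
Fix $z \in \T$, so $\lvert z\rvert = 1$. I claim that the elements $S_e := z s_e$ (for $e \in E$) together with the projections $P_v := p_v$ (for $v \in V$) form a Cuntz-Krieger $H\Gamma$-family in $C^*(H\Gamma)$. The $P_v$ remain mutually orthogonal projections, and each $S_e$ is a partial isometry since $S_e^* S_e = \bar z z\, s_e^* s_e = s_e^* s_e$ is a projection. For (HR1), using $\bar z z = 1$ we compute
\begin{align*}
    S_e^* S_f = \bar z z\, s_e^* s_f = s_e^* s_f = \delta_{ef} \sum_{v \in r(e)} p_v = \delta_{ef} \sum_{v \in r(e)} P_v.
\end{align*}
For (HR2a), note $S_e S_e^* = z \bar z\, s_e s_e^* = s_e s_e^* \leq \sum_{v \in s(e)} p_v = \sum_{v \in s(e)} P_v$, and likewise (HR2b) is immediate since $S_e S_e^* = s_e s_e^*$ and $P_v = p_v$. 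Thus all hypergraph relations hold. By the universal property of $C^*(H\Gamma)$, there is a unique $*$-homomorphism $\gamma_z : C^*(H\Gamma) \to C^*(H\Gamma)$ with $\gamma_z(s_e) = z s_e$ and $\gamma_z(p_v) = p_v$.

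It remains to check that $\gamma$ is an action of $\T$ by automorphisms. For $z, w \in \T$, the composition $\gamma_z \circ \gamma_w$ sends $s_e \mapsto zw\, s_e$ and $p_v \mapsto p_v$, and by uniqueness in the universal property it agrees with $\gamma_{zw}$; moreover $\gamma_1 = \id$. In particular $\gamma_z \circ \gamma_{\bar z} = \gamma_1 = \id$, so each $\gamma_z$ is an automorphism. For continuity, it suffices to verify that $z \mapsto \gamma_z(x)$ is norm-continuous for every $x$ in a dense subalgebra, namely the dense $*$-subalgebra of polynomials in the generators $s_e, s_e^*, p_v$; there the map $z \mapsto \gamma_z$ acts by scaling each monomial by a fixed power of $z$, which depends continuously on $z$. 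A standard $\varepsilon/3$-argument together with the uniform bound $\lVert \gamma_z \rVert = 1$ then extends point-norm continuity to all of $C^*(H\Gamma)$.
\end{proof}

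The main routine obstacle is not the existence of each $\gamma_z$ (which is a direct application of the universal property) but the point-norm continuity of $z \mapsto \gamma_z$; this is handled by the density argument above, exactly as in the graph $C^*$-algebra setting.
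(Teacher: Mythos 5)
Your proof is correct and follows essentially the same route as the paper: verify that $\{zs_e, p_v\}$ is again a Cuntz-Krieger $H\Gamma$-family, invoke the universal property to get $\gamma_z$ with the group law $\gamma_w\circ\gamma_z=\gamma_{wz}$, and establish point-norm continuity. The paper merely sketches these steps (deferring continuity to the graph-algebra argument in Raeburn), whereas you spell out the details, including the correct observation that monomials in the generators are scaled by fixed integer powers of $z$ and the $\varepsilon/3$ extension to the closure.
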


\begin{proof}
    The proof is analogous to the one for graphs \cite[Prop. 2.1]{Raeburn}: One can immediately see, that the elements $zs_e$ and $p_v$ satisfy the Cuntz-Krieger relations for hypergraphs, which by the universal property yields the *-isomorphisms $\gamma_z:C^*(H\Gamma)\to C^*(H\Gamma)$ with $\gamma_w\circ \gamma_z=\gamma_{wz}$. One is left to check continuity as in \cite[Prop. 2.1]{Raeburn}.
\end{proof}

Now, for graph $C^*$-algebras, the Gauge-Invariant Uniqueness Theorem can be formulated as follows.

\begin{theorem}[Gauge-Invariant Uniqueness Theorem for graph $C^*$-algebras]\label{gaugeforgraphs}
Let $\Gamma=(V,E,r,s)$ be a finite graph and let $\pi:C^*(\Gamma)\to B$ be a *-homomorphism, where $B$ is some $C^*$-algebra. If there exists an action $\beta:\T\to Aut(B)$ such that $\beta_z\circ\pi=\pi\circ\gamma_z$, for all $z\in \T$, and if $\pi(p_v)\neq 0$ for all $v\in V$, then $\pi$ is injective.  
\end{theorem}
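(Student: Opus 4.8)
The plan is to follow the classical route for graph $C^*$-algebras: build a faithful conditional expectation onto the gauge-fixed subalgebra (the \emph{core}) and thereby reduce injectivity of $\pi$ to injectivity on this core. First I would average over the gauge action to produce the conditional expectation $\Phi\colon C^*(\Gamma)\to F$ onto the fixed-point algebra $F:=\{x\mid \gamma_z(x)=x \text{ for all } z\in\T\}$, by setting $\Phi(x)=\int_\T \gamma_z(x)\,dz$. Standard arguments (continuity of $\gamma$ and positivity of the integral) show that $\Phi$ is a conditional expectation which is moreover \emph{faithful}, i.e. $\Phi(a^*a)=0$ forces $a=0$. Because $\beta_z\circ\pi=\pi\circ\gamma_z$ for all $z$, the analogous averaging $\Psi(b)=\int_\T \beta_z(b)\,db$ on $B$ intertwines with $\Phi$, giving the compatibility $\Psi\circ\pi=\pi\circ\Phi$.

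With these tools, injectivity of $\pi$ reduces to injectivity of the restriction $\pi|_F$. By the $C^*$-identity it suffices to prove $\pi(a)=0\Rightarrow a=0$ for positive $a=b^*b$. Applying $\Psi$ and the intertwining yields $\pi(\Phi(a))=\Psi(\pi(a))=\Psi(0)=0$. Hence, once $\pi|_F$ is known to be injective, we obtain $\Phi(a)=0$, and faithfulness of $\Phi$ forces $a=0$.

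It then remains to establish that $\pi|_F$ is injective, and here I would invoke the known AF structure of the core. The algebra $F$ is the closure of an increasing union of finite-dimensional $C^*$-subalgebras $F_n$, spanned by the matrix units $s_\mu s_\nu^*$ with $|\mu|=|\nu|=n$ and $r(\mu)=r(\nu)$; each $F_n$ decomposes as a direct sum of matrix algebras indexed by vertices, whose identities are built from the projections $p_v$. Since a $*$-homomorphism on a finite-dimensional $C^*$-algebra is injective precisely when it is nonzero on every simple summand, and since the hypothesis $\pi(p_v)\neq 0$ guarantees that no summand is annihilated, $\pi$ is injective on each $F_n$. An injective $*$-homomorphism is isometric, so $\pi$ is isometric on $\bigcup_n F_n$ and therefore injective on its closure $F$, completing the argument.

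The hard part will be the third step: making the structure of the core $F$ fully precise for an arbitrary finite graph — correctly handling sinks, sources, and vertices emitting no edges so that the indexing of the $F_n$ by path pairs and the identification of their simple summands with vertex projections is accurate — and verifying carefully that nonvanishing on the $p_v$ detects nonvanishing on every matrix block. Everything preceding this (the conditional expectations, the intertwining, and the reduction to the core) is a formal consequence of the faithful expectation $\Phi$ and the gauge-equivariance of $\pi$.
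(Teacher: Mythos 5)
The paper does not actually prove Theorem \ref{gaugeforgraphs}: it imports the result from the literature (citing Thm.~2.1 of the reference on the ideal structure of graph $C^*$-algebras) and only \emph{uses} it later, e.g.\ in the proof of Prop.~\ref{dualgraphisom}, so there is no internal proof to compare against. Your proposal reconstructs the standard literature argument, and it is correct in outline: averaging the gauge action gives a faithful conditional expectation $\Phi$ onto the fixed-point algebra, equivariance of $\pi$ gives the intertwining $\Psi\circ\pi=\pi\circ\Phi$, injectivity then reduces to the core, and the core is an AF algebra whose finite-dimensional layers are spanned by the $s_\mu s_\nu^*$ with $\lvert\mu\rvert=\lvert\nu\rvert$ and $r(\mu)=r(\nu)$; the hypothesis $\pi(p_v)\neq 0$ kills no matrix block because each block contains a minimal projection $s_\mu p_{r(\mu)}s_\mu^*$ whose image is nonzero (as $\pi(s_\mu)^*\pi(s_\mu)=\pi(p_{r(\mu)})\neq 0$), and injectivity on an increasing union of finite-dimensional subalgebras is isometry, hence injectivity on the closure. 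Two points deserve care. First, as the theorem is stated here, $\beta$ is not assumed continuous, so $\Psi(b)=\int_\T\beta_z(b)\,dz$ need not make sense on all of $B$; you should define $\Psi$ only on the $C^*$-subalgebra $\overline{\pi(C^*(\Gamma))}$, where $z\mapsto\beta_z(\pi(x))=\pi(\gamma_z(x))$ is automatically norm-continuous — this is all the intertwining argument requires. Second, as you yourself flag, for finite graphs with sinks the filtration of the core is not simply by paths of exact length $n$ (paths ending in sinks cannot be prolonged), so the layers $F_n$ must also include $s_\mu s_\nu^*$ with $\lvert\mu\rvert=\lvert\nu\rvert\leq n$ and $r(\mu)=r(\nu)$ a sink; with that correction the block structure and the vertex-projection detection argument go through as you describe.
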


This theorem does not apply to hypergraphs in general, as we can see in the next example. 

\begin{beispiel}\label{failed_gauge_uniqueness}
Consider  the hypergraph $\tilde{H\Gamma}_n$ from Prop. \ref{productgraph}. Its corresponding hypergraph $C^*$-algebra is given by $C^*(\tilde{H\Gamma}_n)\cong C(S^1)*\C^n$. We thus obtain a surjective but non-injective *-homomorphism 
    \begin{align*}
        \pi: C^*(\tilde{H\Gamma}_n)\cong C(S^1)*\C^n \rightarrow C(S^1)\otimes \C^n.
    \end{align*}
    The tensor product $C(S^1)\otimes \C^n$ has a gauge action $\beta$ with the property, that $\beta\circ \pi=\pi \circ \gamma$, where $\gamma$ is the gauge action on $C^*(\tilde{H\Gamma}_n)$. Thus, the Gauge-Inivariant Uniqueness Theorem does not hold for this hypergraph $C^*$-algebra.
\end{beispiel}

We will now show that under specific conditions, a hypergraph $C^*$-algebra is isomorphic to the $C^*$-algebra of its dual graph. For graphs, this is true in general, see \cite[Ex. 2.7]{Raeburn}: The $C^*$-algebra of a graph is isomorphic to the $C^*$-algebra of its  dual graph, if the graph has no sinks.

\begin{definition}
    Let $H\Gamma=(V, E, r, s)$ be a finite hypergraph. The \emph{dual graph}\index{Dual Graph} $\tilde \Gamma$ is defined as 
    \begin{align*}
        &\tilde{V}:=\{e \; |\; e \in E\},\\
        &\tilde{E}:=\{(e,f)\; |\; e,f\in E, \; s(f)\cap r(e)\neq \emptyset\},\\
        &\tilde s((e,f)):=e,\\
        &\tilde r((e,f)):=f.
    \end{align*}
\end{definition}

The dual graph is actually a graph, not just a hypergraph. 

\begin{lemma}\label{rangeprojectionquasiperfectpaths}
    Let $H\Gamma=(V, E, r,s)$ be a finite hypergraph with only quasi perfect paths and no sinks. Then it holds for all $e \in E$
        \begin{align*}
            p_{r(e)}=\sum_{f\in E,\; s(f)\subseteq r(e)}s_fs_f^*. 
        \end{align*}
\end{lemma}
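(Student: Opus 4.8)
The plan is to prove the identity
\[
p_{r(e)} = \sum_{f \in E,\; s(f) \subseteq r(e)} s_f s_f^*
\]
by establishing the two inequalities $\leq$ and $\geq$ separately, where I write $p_{r(e)} = \sum_{v \in r(e)} p_v$ for brevity. The hypothesis that $H\Gamma$ has only quasi perfect paths is crucial: it guarantees that whenever $f$ is an edge with $s(f) \cap r(e) \neq \emptyset$, we actually have $s(f) \subseteq r(e)$, so the summation condition $s(f) \subseteq r(e)$ is equivalent to the weaker-looking $s(f) \cap r(e) \neq \emptyset$. I would make this observation explicit early, since it reconciles the two natural indexing sets and will be used on both sides.

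For the inequality $\geq$, I would fix an edge $f$ with $s(f) \subseteq r(e)$ and use Relation (HR2a), which gives $s_f s_f^* \leq \sum_{w \in s(f)} p_w \leq \sum_{v \in r(e)} p_v = p_{r(e)}$, the middle step holding because $s(f) \subseteq r(e)$ and the $p_v$ are mutually orthogonal. The individual summands $s_f s_f^*$ are mutually orthogonal projections: by Proposition \ref{source_range_projections}, $s_f s_g^* = 0$ whenever $r(f) \cap r(g) = \emptyset$, but I actually need orthogonality of the \emph{range} projections $s_f s_f^*$ and $s_g s_g^*$, which requires a short argument using (HR1) to see that $s_f^* s_g = 0$ for $f \neq g$. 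Once orthogonality is in hand, the finite sum $\sum_{s(f) \subseteq r(e)} s_f s_f^*$ is a projection bounded above by $p_{r(e)}$, giving $\geq$ in the sense of $\sum_{f} s_f s_f^* \leq p_{r(e)}$.

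For the reverse inequality, I would fix a vertex $v \in r(e)$. Since $H\Gamma$ has no sinks, $v$ emits at least one edge, so Relation (HR2b) applies and yields $p_v \leq \sum_{f \in E,\; v \in s(f)} s_f s_f^*$. The key point is that every edge $f$ appearing in this sum satisfies $v \in s(f) \subseteq r(e)$, where the inclusion is the quasi-perfect-path consequence noted above (since $v \in s(f) \cap r(e)$ makes the intersection nonempty). Hence each such $f$ is among the summation indices on the right-hand side, so $p_v \leq \sum_{f,\, s(f) \subseteq r(e)} s_f s_f^*$. Summing over $v \in r(e)$ and invoking mutual orthogonality of the $p_v$ gives $p_{r(e)} \leq \sum_{f,\, s(f) \subseteq r(e)} s_f s_f^*$. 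Combining both inequalities via the standard fact recorded in Remark \ref{orderrelation} (that $q \leq \sum p_i$ together with each $p_i \leq q$ forces equality for mutually orthogonal projections) yields the claimed equality.

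I expect the main obstacle to be the careful bookkeeping of the index sets and the repeated use of the quasi-perfect-path hypothesis to upgrade $s(f) \cap r(e) \neq \emptyset$ to $s(f) \subseteq r(e)$; this is what makes the two sides match exactly rather than merely comparably. A secondary subtlety is confirming that the range projections $s_f s_f^*$ are genuinely mutually orthogonal, which is needed both to treat the finite sum as a projection and to apply the equality criterion from Remark \ref{orderrelation}.
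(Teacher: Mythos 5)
Your proposal is correct and follows essentially the same route as the paper's (much terser) proof: the paper likewise observes that quasi perfectness upgrades $s(f)\cap r(e)\neq\emptyset$ to $s(f)\subseteq r(e)$ and then simply says the claim follows by combining (HR2a) and (HR2b). Your write-up fills in exactly the details the paper leaves implicit -- the two inequalities, the mutual orthogonality of the range projections $s_fs_f^*$ via (HR1), and the equality criterion of Remark \ref{orderrelation} -- so it is a faithful, fully detailed version of the same argument.
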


\begin{proof}
    By definition of quasi perfect paths,  $s(f)\cap r(e)\neq \emptyset$ implies $s(f) \subseteq r(e)$ for all $e,f \in E$. The statement follows when combining (HR2a) and (HR2b). 
\end{proof}

\begin{proposition} \label{dualgraphisom}
    Let $H\Gamma=(V, E, r ,s)$ be a finite hypergraph with only quasi perfect paths and no sinks, $\tilde \Gamma$ be its dual graph. Then $C^*(\tilde \Gamma)$ is  isomorphic to the $C^*$-subalgebra of $C^*(H\Gamma)$ generated by $\{s_e\;|\; e\in E\}$.
\end{proposition}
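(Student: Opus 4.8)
The plan is to embed $C^*(\tilde\Gamma)$ into $C^*(H\Gamma)$ by producing an explicit Cuntz--Krieger $\tilde\Gamma$-family out of the generators $s_e$, to check that the resulting $*$-homomorphism maps onto the subalgebra $C^*(\{s_e\mid e\in E\})$, and then to deduce injectivity from the Gauge-Invariant Uniqueness Theorem for graphs (Theorem \ref{gaugeforgraphs}). The two previously established facts, Proposition \ref{source_range_projections} and Lemma \ref{rangeprojectionquasiperfectpaths}, are exactly what make the relations work out.

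For each vertex $e$ and each edge $(e,f)$ of $\tilde\Gamma$ I would put $Q_e:=s_es_e^*$ and $T_{(e,f)}:=s_es_fs_f^*$. From (HR1) in the form $s_e^*s_{e'}=\delta_{ee'}p_{r(e)}$ the $Q_e$ are mutually orthogonal projections and each $T_{(e,f)}$ is a partial isometry. Since $(e,f)\in\tilde E$ means $s(f)\cap r(e)\neq\emptyset$, quasi-perfectness gives $s(f)\subseteq r(e)$, and Proposition \ref{source_range_projections} yields $s_f^*p_{r(e)}=s_f^*$; a short computation then gives (CK1), namely $T_{(e,f)}^*T_{(e',f')}=\delta_{(e,f),(e',f')}Q_f=\delta_{(e,f),(e',f')}Q_{\tilde r(e,f)}$. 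For (CK2), Lemma \ref{rangeprojectionquasiperfectpaths} gives $\sum_{f:\,s(f)\subseteq r(e)}s_fs_f^*=p_{r(e)}$, so that $\sum_{(e,f)\in\tilde E}T_{(e,f)}T_{(e,f)}^*=s_ep_{r(e)}s_e^*=s_es_e^*=Q_e$, again using Proposition \ref{source_range_projections}; the no-sinks hypothesis ensures that every dual vertex $e$ emits at least one edge, so (CK2) is genuinely required and is verified at each $e$. The universal property now gives a $*$-homomorphism $\pi:C^*(\tilde\Gamma)\to C^*(H\Gamma)$ with $\pi(\tilde p_e)=Q_e$ and $\pi(\tilde s_{(e,f)})=T_{(e,f)}$, taking values in $C^*(\{s_e\})$.

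Surjectivity onto $C^*(\{s_e\})$ follows by recovering each generator: Proposition \ref{source_range_projections} and Lemma \ref{rangeprojectionquasiperfectpaths} give
\[ s_e=s_ep_{r(e)}=s_e\sum_{f:\,s(f)\subseteq r(e)}s_fs_f^*=\sum_{(e,f)\in\tilde E}T_{(e,f)}=\sum_{(e,f)\in\tilde E}\pi(\tilde s_{(e,f)}), \]
so every $s_e$ lies in the image of $\pi$, which is therefore all of $C^*(\{s_e\})$. For injectivity I would transport the gauge action: $\gamma$ leaves $C^*(\{s_e\})$ invariant, and its restriction $\beta$ satisfies $\beta_z(\pi(\tilde p_e))=s_es_e^*=\pi(\tilde\gamma_z(\tilde p_e))$ and $\beta_z(\pi(\tilde s_{(e,f)}))=z\,s_es_fs_f^*=\pi(\tilde\gamma_z(\tilde s_{(e,f)}))$, i.e. $\beta_z\circ\pi=\pi\circ\tilde\gamma_z$ for all $z\in\T$. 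Theorem \ref{gaugeforgraphs} then makes $\pi$ injective, hence an isomorphism onto $C^*(\{s_e\})$, \emph{provided} $\pi(\tilde p_e)=s_es_e^*\neq 0$ for every $e$.

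This last nonvanishing requirement is the main obstacle. Checking the Cuntz--Krieger relations and surjectivity is routine once Proposition \ref{source_range_projections} and Lemma \ref{rangeprojectionquasiperfectpaths} are in hand, but $s_e\neq0$ is not forced by the universal relations by themselves. I would secure it by exhibiting one Cuntz--Krieger $H\Gamma$-family in which all vertex projections act nontrivially; since $s_e^*s_e=p_{r(e)}=\sum_{v\in r(e)}p_v$ and $r(e)\neq\emptyset$, having every $p_v\neq0$ forces every $s_e\neq0$ in the universal algebra, which is exactly the hypothesis needed to apply Theorem \ref{gaugeforgraphs}.
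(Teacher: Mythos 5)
Your proposal is correct and follows essentially the same route as the paper's proof: the same Cuntz--Krieger $\tilde\Gamma$-family $Q_e=s_es_e^*$, $T_{(e,f)}=s_eQ_f=s_es_fs_f^*$, the same use of Proposition \ref{source_range_projections} and Lemma \ref{rangeprojectionquasiperfectpaths} to verify (CK1)/(CK2) and to recover $s_e=\sum_{(e,f)\in\tilde E}T_{(e,f)}$ for surjectivity onto $C^*(\{s_e\;|\;e\in E\})$, and the same gauge-intertwining appeal to Theorem \ref{gaugeforgraphs} for injectivity. The nonvanishing requirement $s_es_e^*\neq 0$ that you single out as the main obstacle is precisely the point the paper handles only with the parenthetical remark ``note that all projections are non-zero,'' so your treatment is, if anything, the more explicit of the two on that step.
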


\begin{proof}
    We define a Cuntz-Krieger $\tilde \Gamma$-family in $C^*(H\Gamma)$ by 
        \begin{align*}
            Q_e&:=s_es_e^*\hspace{35pt} \forall e \in \tilde{V},\\
            T_{(e,f)}&:=s_eQ_f \hspace{30pt} \forall (e,f) \in \tilde{E}.
        \end{align*}
    To check that this is a Cuntz-Krieger $\tilde \Gamma$-family, a short calculation using the hypergraph relations of $C^*(H\Gamma)$ shows, that the elements $Q_e$ are mutually orthogonal projections and the elements $T_{(e,f)}$ are partial isometries. 
    It remains to check the Cuntz-Krieger relations for graphs, since the dual graph is a graph.
    Since the hypergraph has only quasi perfect paths, all paths $ef$ fulfill $s(f)\subseteq r(e)$ which implies by Lemma \ref{source_range_projections} and the definition of $Q_f$, that $Q_fp_{r(e)}=Q_f$. With this we get the first Cuntz-Krieger relation:
        \begin{align*}
            T_{(e,f)}^*T_{(g,h)}&=(s_eQ_f)^*(s_gQ_h)\\
            &=Q_fs_e^*s_gQ_h\\
            &=\delta_{e,g} Q_fp_{r(e)}Q_h\\
            &=\delta_{e,g} Q_fQ_h\\
            &=\delta_{e,g} \delta_{f,h}Q_f\\
            &=\delta_{(e,f),(g,h)} Q_{r((e,f))}.
    \intertext{For the second Cuntz-Krieger relation we need that for quasi perfect paths with no sinks, Lemma \ref{rangeprojectionquasiperfectpaths} applies, and we get:}
            Q_{e}  &=s_es_e^*\\
            &=s_ep_{r(e)}s_e^*\\
            &=s_e \left( \sum_{f\in E, s(f)\subseteq r(e)}s_fs_f^*\right)s_e^*\\
            &=s_e \left( \sum_{f\in E, s(f)\subseteq r(e)}Q_f\right)s_e^*\\
            &=\sum_{f\in E, s(f)\subseteq r(e)}s_eQ_fs_e^*\\
            &=\sum_{f\in E, s(f)\subseteq r(e)}T_{(e,f)}T_{(e,f)}^*\\
            &=\sum_{x\in \tilde{E}, s(x)=e}T_{x}T_{x}^*.
        \end{align*}
    Thus, by the universal property, we get a canonical *-homomorphism $\pi:C^*(\tilde{\Gamma}) \rightarrow C^*(H\Gamma)$ defined by $q_e \mapsto Q_e$ and $t_{(e,f)}  \mapsto T_{(e,f)}$.
    Since  the dual graph is really a graph, we  can use the Gauge-Invariant Uniqueness Theorem for graph $C^*$-algebras, Thm. \ref{gaugeforgraphs}.
    Let $\gamma$ and $\tilde{\gamma}$ be the gauge actions on $C^*(H\Gamma)$ and  $C^*(\tilde{\Gamma})$ respectively. Then a short calculation shows, that $\pi \circ \tilde{\gamma}_z=\gamma_z \circ \pi$ for all $z \in \T$. Thus the requirements for the Gauge-Invariant Uniqueness Theorem are given (note that all projections are non-zero) and the *-homomorphism $\pi$ is injective. By definition of the Cuntz-Krieger family $\{T_{(e,f)}, Q_f\}$ we know that $Im(\pi)\subseteq C^*(s_e\;|\; e\in E)$. 
    Using again Lemma \ref{rangeprojectionquasiperfectpaths} we get
        \begin{align*}
            s_e=s_ep_{r(e)}=s_e\sum_{f\in E, s(f)\subseteq r(e)}s_fs_f^*=\sum_{f\in E, s(f)\subseteq r(e)}s_eQ_f=\sum_{f\in E, s(f)\subseteq r(e)}T_{(e,f)}.
        \end{align*}
    Hence the Cuntz-Krieger family $\{T_{(e,f)}, Q_f\}$ generates $\{s_e\;|\; e\in E\}$. Thus, $Im(\pi)=C^*(s_e\;|\; e\in E)$ and $\pi$ is an isomorphism between $C^*(\tilde{\Gamma})$ and $C^*(s_e\;|\; e\in E)$.
\end{proof}

\begin{korollar}\label{HyperIsomorphGraph}
    Let $H\Gamma=(V, E, r, s)$ be a finite hypergraph with only quasi perfect paths, no sinks and $C^*(H\Gamma)$ be generated by $\{s_e\;|\; e\in E\}$, $\tilde \Gamma$ be its dual graph. Then $C^*(\tilde \Gamma)\cong C^*(H\Gamma)$.
\end{korollar}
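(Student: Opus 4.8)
The plan is to read the corollary off directly from Proposition \ref{dualgraphisom}, which already carries all the content. That proposition assumes precisely the two standing hypotheses here -- only quasi perfect paths and no sinks -- and it produces an explicit Cuntz-Krieger $\tilde\Gamma$-family $\{Q_e, T_{(e,f)}\}$ inside $C^*(H\Gamma)$, uses the Gauge-Invariant Uniqueness Theorem \ref{gaugeforgraphs} (valid because $\tilde\Gamma$ is a genuine graph) to show that the induced map $\pi:C^*(\tilde\Gamma)\to C^*(H\Gamma)$ is injective, and finally identifies its image as the $C^*$-subalgebra $C^*(s_e\mid e\in E)$ generated by the edge partial isometries. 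So before doing anything new, I already have an isomorphism $C^*(\tilde\Gamma)\cong C^*(s_e\mid e\in E)$.

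The only additional ingredient in the corollary is the hypothesis that $C^*(H\Gamma)$ is generated by $\{s_e\mid e\in E\}$. The first and essentially only step is to observe that this hypothesis says exactly $C^*(s_e\mid e\in E)=C^*(H\Gamma)$. Substituting this equality into the isomorphism supplied by Proposition \ref{dualgraphisom} yields
\[
C^*(\tilde\Gamma)\cong C^*(s_e\mid e\in E)=C^*(H\Gamma),
\]
which is the assertion. I would keep the written proof to a single sentence invoking Proposition \ref{dualgraphisom} together with this identification.

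I do not expect any real obstacle, since the work lives entirely in the preceding proposition. The one point worth flagging is that the generation hypothesis is a genuine extra assumption and not automatic: in general the vertex projections $p_v$ need not lie in $C^*(s_e\mid e\in E)$, because the hypergraph relations only furnish the inequalities $s_es_e^*\le\sum_{v\in s(e)}p_v$ and $p_v\le\sum_{e:\,v\in s(e)}s_es_e^*$ rather than the equalities available in the graph setting. Whenever those inequalities do upgrade to equalities -- which is precisely when the generation hypothesis holds -- the subalgebra absorbs every $p_v$ and the corollary applies.
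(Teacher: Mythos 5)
Your proposal is correct and is exactly how the paper obtains this corollary: Proposition \ref{dualgraphisom} already gives $C^*(\tilde\Gamma)\cong C^*(s_e\;|\;e\in E)$ under the quasi-perfect-paths and no-sinks hypotheses, and the added generation hypothesis turns that subalgebra into all of $C^*(H\Gamma)$. The paper states the corollary without further proof precisely because of this one-line deduction, so your argument matches it.
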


For specific hypergraphs we thus get an isomorphism between the hypergraph $C^*$-algebra and the graph $C^*$-algebra of its dual graph. This cannot be true in general -- this would not be in line with the non-nuclear examples of hypergraph $C^*$-algebras. 
Also, there cannot be a non-amenable hypergraph with only quasi perfect paths and no sinks such that $C^*(H\Gamma)$ is generated by $\{s_e\;|\; e\in E\}$.

As an immediate consequence of the above corollary, we may formulate a Gauge-Invariant Uniqueness Theorem for hypergraph $C^*$-algebras under the above specific assumptions on the hypergraph.

\begin{theorem}[Gauge-Invariant Uniqueness Theorem for hypergraph $C^*$-algebras]\label{Gauge_Uniqueness_hypergraphs} 
    Let $H\Gamma=(V, E, r, s)$ be a finite hypergraph with only quasi perfect paths, no sinks and $C^*(H\Gamma)$ be generated by $\{s_e\;|\; e\in E\}$. Let $\{P_v,S_e\}$ be a Cuntz-Krieger $H\Gamma$-family in a $C^*$-algebra $B$ with each $P_v\neq 0$. If there is a continuous action $\beta: \T \rightarrow Aut(B)$ such that the gauge action $\gamma$ commutes with the canonical *-homomorphism $\pi:C^*(H\Gamma) \rightarrow B$, i.e. $\pi\circ \gamma_z=\beta_z \circ \pi$ for all $z \in \T$. Then $\pi$ is injective.
\end{theorem}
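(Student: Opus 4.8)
The statement is meant to be an application of the graph Gauge-Invariant Uniqueness Theorem (Theorem \ref{gaugeforgraphs}) transported through the isomorphism of Corollary \ref{HyperIsomorphGraph}. The plan is to first invoke that corollary to fix an isomorphism $\Phi:C^*(\tilde\Gamma)\to C^*(H\Gamma)$ sending the canonical graph generators $q_e\mapsto s_es_e^*$ and $t_{(e,f)}\mapsto s_es_fs_f^*$, exactly as constructed in the proof of Proposition \ref{dualgraphisom}. Since $\Phi$ is a bijection, proving that $\pi$ is injective is equivalent to proving that the composite $\rho:=\pi\circ\Phi:C^*(\tilde\Gamma)\to B$ is injective; and the latter is a *-homomorphism out of a genuine graph $C^*$-algebra, so Theorem \ref{gaugeforgraphs} becomes available. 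Thus the whole argument reduces to checking the two hypotheses of the graph theorem for $\rho$.

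For the equivariance hypothesis I would combine the given relation $\beta_z\circ\pi=\pi\circ\gamma_z$ with the intertwining $\gamma_z\circ\Phi=\Phi\circ\tilde\gamma_z$ already established inside the proof of Proposition \ref{dualgraphisom}, where $\tilde\gamma$ denotes the gauge action on $C^*(\tilde\Gamma)$. Chaining these identities gives
\[ \beta_z\circ\rho=\beta_z\circ\pi\circ\Phi=\pi\circ\gamma_z\circ\Phi=\pi\circ\Phi\circ\tilde\gamma_z=\rho\circ\tilde\gamma_z \]
for every $z\in\T$, which is precisely the equivariance demanded by Theorem \ref{gaugeforgraphs}.

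The remaining hypothesis is that $\rho$ does not annihilate any vertex projection of $\tilde\Gamma$, i.e. $\rho(q_e)\neq 0$ for every vertex $e\in E$ of the dual graph. Here $\rho(q_e)=\pi(s_es_e^*)=S_eS_e^*$, so I must verify $S_e\neq 0$ for all $e$, and this is the one place where the hypothesis $P_v\neq 0$ is genuinely used. By Relation (HR1) one has $S_e^*S_e=\sum_{v\in r(e)}P_v$; since $r(e)$ is nonempty and the $P_v$ are mutually orthogonal nonzero projections, this sum is a nonzero projection, whence $S_e\neq 0$ and therefore $S_eS_e^*\neq 0$. With both hypotheses verified, Theorem \ref{gaugeforgraphs} yields that $\rho$ is injective, and as $\Phi$ is an isomorphism, $\pi=\rho\circ\Phi^{-1}$ is injective as well.

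I expect the equivariance bookkeeping to be routine once $\gamma_z\circ\Phi=\Phi\circ\tilde\gamma_z$ is quoted; the only point deserving care, and the main obstacle, is the non-vanishing of the $S_eS_e^*$, since the theorem hypothesizes only $P_v\neq 0$ and one must pass from vertices to edges via (HR1).
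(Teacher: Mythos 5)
Your proposal is correct and takes exactly the same route as the paper's own (very terse) proof, which simply invokes Corollary \ref{HyperIsomorphGraph} together with the Gauge-Invariant Uniqueness Theorem for graphs, Theorem \ref{gaugeforgraphs}. You merely supply the details the paper leaves implicit — the equivariance chain $\beta_z\circ\rho=\rho\circ\tilde\gamma_z$ using the intertwining from Proposition \ref{dualgraphisom}, and the verification via (HR1) that $\rho(q_e)=S_eS_e^*\neq 0$ — and these all check out.
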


\begin{proof}
  We use Cor. \ref{HyperIsomorphGraph} and the Gauge-Invariant Uniqueness Theorem for graphs.  
\end{proof}

The above version of a Gauge-Invariant Uniqueness Theorem for hypergraph $C^*$-algebras is not very deep, as it directly relies on the Gauge-Invariant Uniqueness Theorem for graph $C^*$-algebras. However, it can be taken as a starting point for proving more general Gauge-Invariant Uniqueness Theorems for hypergraph $C^*$-algebras -- or rather for specifying the class of hypergraphs for which it holds. It seems that for the class of hypergraph $C^*$-algebras, it is more appropriate to speak of a Gauge-Invariant Uniqueness Property and to determine the class of hypergraphs possessing it.

Finally, let us derive from Prop. \ref{dualgraphisom} a proof of the Gauge-Inivariant Uniqueness Theorem for ultragraphs. This fact has been known before, \cite{Tomforde}[Prop. 5.5].

\begin{korollar}[Gauge-Invariant Uniqueness Theorem for ultragraph $C^*$-algebras]
    Let $\mathcal{G}$ be an ultragraph without sinks. Then the Gauge-Invariant Uniqueness Theorem is valid.
\end{korollar}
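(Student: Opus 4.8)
The plan is to reduce the ultragraph statement to the already-proved hypergraph Gauge-Invariant Uniqueness Theorem, Theorem \ref{Gauge_Uniqueness_hypergraphs}. Recall that an ultragraph is precisely a hypergraph with $\lvert s(e)\rvert = 1$ for every edge, so an ultragraph without sinks is in particular a finite (or suitably restricted) hypergraph, and the machinery developed above applies once we verify the three hypotheses of the hypergraph theorem: that the hypergraph has only quasi perfect paths, that it has no sinks, and that $C^*(\mathcal{G})$ is generated by $\{s_e \mid e \in E\}$.

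First I would check the ``only quasi perfect paths'' condition. For an ultragraph, whenever $s(f) \cap r(e) \neq \emptyset$ we have $\lvert s(f)\rvert = 1$, so the single vertex in $s(f)$ already lies in $r(e)$, forcing $s(f) \subseteq r(e)$. Thus \emph{every} path in an ultragraph is automatically quasi perfect — there are no merely partial paths that fail the inclusion. This is the key structural observation that makes the reduction work, and it is exactly where the $\lvert s(e)\rvert = 1$ restriction of ultragraphs is used. The ``no sinks'' hypothesis is handed to us directly by the assumption on $\mathcal{G}$.

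Next I would verify the generation condition, namely that $C^*(\mathcal{G})$ is generated by the partial isometries $\{s_e\}$ alone. Since $\mathcal{G}$ has no sinks, every vertex $v$ emits at least one edge, and Relation (HR2b) together with (HR2a) — as combined in Lemma \ref{rangeprojectionquasiperfectpaths} — lets us recover each vertex projection from the range projections $s_e s_e^*$. Concretely, from (HR2b) each $p_v \leq \sum_{v \in s(e)} s_e s_e^*$, and with no sinks one expresses the $p_v$ in terms of the $s_e s_e^*$, so the projections lie in $C^*(s_e \mid e \in E)$; hence the latter subalgebra is all of $C^*(\mathcal{G})$. I would spell out this recovery of the $p_v$ carefully, as it is the one place where genuine verification (rather than a definitional unwinding) is needed.

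With all three hypotheses in hand, Theorem \ref{Gauge_Uniqueness_hypergraphs} applies verbatim: any Cuntz-Krieger $\mathcal{G}$-family with nonzero vertex projections that intertwines the gauge action gives an injective canonical $*$-homomorphism, which is the content of the Gauge-Invariant Uniqueness Theorem. The main obstacle I anticipate is purely bookkeeping around the generation condition in the possibly infinite ultragraph setting — if $\mathcal{G}$ is not assumed finite, one must take care that the sums recovering the $p_v$ remain the \emph{finite} sums permitted by the restricted form of (GR2b) in Definition \ref{Hypergraph_Def_generalized}, and that the generalized-vertex projections $p_{r(e)}$ used in Lemma \ref{rangeprojectionquasiperfectpaths} are the right objects; but conceptually no new idea beyond the observation ``ultragraph $\Rightarrow$ only quasi perfect paths'' is required.
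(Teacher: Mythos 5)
Your proposal is correct and follows essentially the same route as the paper: verify the three hypotheses of Theorem \ref{Gauge_Uniqueness_hypergraphs}, namely that $\lvert s(f)\rvert=1$ forces every path to be quasi perfect, that the no-sinks assumption is given, and that combining (HR2a) and (HR2b) yields $p_v=\sum_{e\in E,\, v=s(e)}s_es_e^*$ so the vertex projections lie in $C^*(s_e \mid e\in E)$. Your citation of Lemma \ref{rangeprojectionquasiperfectpaths} for the generation step is slightly off target (that lemma concerns the range projections $p_{r(e)}$), but the explicit combination of (HR2a) and (HR2b) you write down is exactly the paper's argument, so this is only a labeling imprecision, not a gap.
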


\begin{proof}
    We check that ultragraphs fulfill the assertions of Theorem \ref{Gauge_Uniqueness_hypergraphs}. Since the source of each edge in an ultragraph is given by one vertex, combining the second and third hypergraph relations yields $p_v=\sum_{e \in E, v =s(e)}s_es_e^*$. Since all vertices emit at least one edge, this equality is valid for all vertices and hence, the $C^*$-algebra is generated by the partial isometries. Furthermore, each path is automatically quasi perfect, as $r(e)\cap s(f)\neq 0$ implies that $r(e)\cap s(f)=s(f)$, as $s(f)$ consists of just one vertex. 
\end{proof}

\begin{remark} \label{ultragraphs_restrictions_gauge_uniqueness}
    As seen in Theorem \ref{Gauge_Uniqueness_hypergraphs}, the Gauge-Uniqueness Theorem holds for finite hypergraphs with only quasi perfect paths, no sinks and whose C*-algebra is generated by the partial isometries $s_e$. Do hypergraphs with these  properties exist that are not ultragraphs -- or are ultragraphs without sinks exactly the hypergraphs with quasi perfect paths whose $C^*$-algebra is generated by its generating partial isometries? This could be interesting for further research. 
\end{remark}

\section{\textbf{Moves on hypergraphs}}
\label{sec:moves}

In this section we discuss basic moves to manipulate hypergraphs. These moves play an important role in the classification of graph $C^*$-algebras up to stable isomorphism, see  \cite{complete_classification}. We introduce four of these moves, adapt them to the hypergraph setting and investigate the corresponding $C^*$-algebras. The construction of the moves is motivated from graph $C^*$-algebras, see for instance \cite[Def. 2.14--2.17]{geometric_finite_graphs}. For readers familiar with the theory of symbolic dynamics note that the moves we consider are closely related to flow equivalence of shifts spaces \cite{lind_marcus_2021}. 

The main result of this section is, that as soon as these moves are performed at vertices which locally behave like graphs (or ultragraphs), we observe a behaviour similar to the moves for graphs.

\begin{definition}
    A hypergraph is called \emph{locally ultra at vertex $w$}, if  for all $e\in E$, the assertion $w\in s(e)$ implies $s(e)=\{w\}$.
\end{definition}

Recall from Section 3.7 that $\lvert r(e)\rvert\geq 2$ is a feature we can circumvent, by decomposition of ranges. So, it is really the property $\lvert s(e)\rvert\geq 2$, which separates hypergraphs from ultragraphs ($\lvert s(e)\rvert=1$) and graphs ($\lvert s(e)\rvert=\lvert s(e)\rvert=1$). In other words, if $w\in s(e)$ implies $\lvert s(e)\rvert=1$, then the graph is ``like an ultragraph/graph'' at vertex $w$, indeed.

The following theorem summarizes the main contents of this section; it will be proven step by step in the sequel.

\begin{theorem}\label{thmmovessummary}
    Let $H\Gamma$ be a hypergraph, and let $H\Gamma$ be locally ultra at some  vertex $w$ (possibly satisfying some further mild assumptions on $w$). The moves S, R, I and O produce hypergraphs $H\Gamma'$ and *-homomorphisms $\pi:C^*(H\Gamma')\to C^*(H\Gamma)$ respectively, such that $Im(\pi)$ is a full corner in $C^*(H\Gamma)$. In the case of Move O, $\pi$ is even a *-isomorphism.
\end{theorem}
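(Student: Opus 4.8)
The plan is to prove Theorem \ref{thmmovessummary} by a common template applied case by case to the four moves S, R, I and O. In each case, given a hypergraph $H\Gamma$ that is locally ultra at $w$, I would first spell out the modified hypergraph $H\Gamma'$ and then exhibit a candidate Cuntz-Krieger $H\Gamma'$-family $\{S_e, P_v\}$ living inside $C^*(H\Gamma)$, built from the canonical generators $\{s_e, p_v\}$ (for instance by splitting or amalgamating the partial isometries attached to $w$, and by taking the surviving vertex projections). The local-ultra hypothesis is exactly what makes these candidates well-behaved: since every edge $e$ with $w\in s(e)$ satisfies $s(e)=\{w\}$, relations (HR2a) and (HR2b) at $w$ collapse to the honest graph-type equality $p_w=\sum_{s(e)=\{w\}} s_e s_e^*$, so the partial isometries emitted by $w$ can be manipulated exactly as in the graph or ultragraph setting.

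First, for each move I would verify that the proposed family satisfies (HR1), (HR2a) and (HR2b) for $H\Gamma'$. This is the routine bookkeeping step, absorbing range and source projections via Proposition \ref{source_range_projections} and using the orthogonality of the ranges of distinct edges. The universal property of $C^*(H\Gamma')$ then delivers the desired *-homomorphism $\pi\colon C^*(H\Gamma')\to C^*(H\Gamma)$.

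Next I would identify the corner. The natural candidate for the corner projection is a sum $P=\sum_{v\in F} p_v$ over the set $F\subseteq V$ of vertices surviving the move, and I would show $Im(\pi)=P\,C^*(H\Gamma)\,P$ by checking that all $\pi(S_e)$ and $\pi(P_v)$ lie in $P\,C^*(H\Gamma)\,P$ and conversely generate it. Fullness of $P$ would follow by showing that the closed two-sided ideal generated by $P$ contains every $p_v$ and $s_e$: a vertex $v\notin F$ is connected to $F$ through the edges incident to $w$, and local-ultraness guarantees these edges behave graph-like, so the corresponding projection lands in the ideal. Together with injectivity of $\pi$ this identifies $C^*(H\Gamma')$ with a full corner of $C^*(H\Gamma)$, which is the assertion for S, R and I.

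For Move O I would additionally upgrade $\pi$ to an isomorphism by constructing an explicit inverse: I would exhibit a Cuntz-Krieger $H\Gamma$-family inside $C^*(H\Gamma')$ and check on generators that the two universal *-homomorphisms compose to the identities. The main obstacle throughout is injectivity of $\pi$. Since the Gauge-Invariant Uniqueness Theorem fails for general hypergraphs (Example \ref{failed_gauge_uniqueness}), I cannot simply invoke gauge-invariance to conclude faithfulness; instead injectivity has to come from explicit generator-level constructions -- a genuine two-sided inverse in the case of O, and a verifiable left inverse (or the restricted uniqueness theorem, Theorem \ref{Gauge_Uniqueness_hypergraphs}, where the local structure permits) in the corner cases. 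The role of the local-ultra hypothesis is precisely to confine the non-graph-like behaviour of $H\Gamma$ away from $w$, so that each move only rearranges edges at a vertex where the algebra is locally graph- or ultragraph-like and the classical arguments survive.
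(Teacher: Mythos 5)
Your overall template coincides with the paper's strategy for moves S and R: exhibit a Cuntz-Krieger $H\Gamma'$-family inside $C^*(H\Gamma)$ built from the canonical generators, invoke the universal property to obtain $\pi$, identify $Im(\pi)$ with $pC^*(H\Gamma)p$ for $p=\sum_{v\in F}p_v$ over the surviving vertices, and prove fullness by showing any closed two-sided ideal containing the corner swallows all generators via the graph-like relations at $w$. Your plan for move O (explicit Cuntz-Krieger families in both directions, checked to be mutually inverse on generators) is also the paper's. Your key observation that local ultraness collapses (HR2a) and (HR2b) at $w$ to the equality $p_w=\sum_{s(e)=\{w\}}s_es_e^*$ is precisely the mechanism the paper exploits, both in verifying the relations and in the fullness arguments. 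One routing difference: for move I the paper does not work directly with $H\Gamma_I$ but factors through an intermediate ``indelay'' hypergraph $H\Gamma_D$, proving a full-corner statement for the indelay and separately that $C^*(H\Gamma_D)\cong C^*(H\Gamma_I)$; your direct treatment of I alongside S and R is plausible but would require redoing that technical work.

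The genuine gap is your handling of injectivity. You assert that ``together with injectivity of $\pi$'' one identifies $C^*(H\Gamma')$ with a full corner, call injectivity ``the main obstacle throughout'', and propose to obtain it from a left inverse or from Theorem \ref{Gauge_Uniqueness_hypergraphs}. But Theorem \ref{thmmovessummary} deliberately does not claim injectivity for S, R and I: the conclusion is only that $Im(\pi)$ is a full corner, and the paper states explicitly, immediately after the theorem, that these $\pi$ ``are not necessarily injective'' -- which is exactly why the hypergraph setting yields only a weakened form of Morita equivalence. So this step of your proposal is not just superfluous; it would fail if attempted. A left inverse on generators does not exist in general, and Theorem \ref{Gauge_Uniqueness_hypergraphs} cannot be invoked: its hypotheses (only quasi perfect paths, no sinks, $C^*$-algebra generated by the partial isometries alone) are \emph{global} conditions that local ultraness at the single vertex $w$ does not imply. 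For instance, the hypergraph of Proposition \ref{construction_nonnuclear_1} is locally ultra at $w$, yet the pair $(e,f)$ there is a partial path that is not quasi perfect, so the restricted uniqueness theorem is unavailable -- and its $C^*$-algebra is not even nuclear. Once you delete the injectivity step, the remaining parts of your outline (Cuntz-Krieger family, universal property, corner identification, fullness, and the two-sided inverse for O only) prove exactly the statement as written.
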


The definition of the moves, the precise statements of the theorem for the respective moves, and the proof of this theorem will be given in the subsequent subsections. Note that unlike in the graph case, the *-homomorphisms $\pi$ for moves S, R and I are not necessarily injective. Hence, we only have a weaker form of Morita equivalence in the hypergraph case. In the graph and ultragraph cases, the Gauge-Invariant Uniqueness Theorem can be applied in order to obtain injectivity of $\pi$, so again, the Gauge-Invariant Uniqueness Property for hypergraphs plays a role here, when speaking about moves.

\subsection{Definition of the moves}

\begin{definition}[\textbf{Move S}]
    Let $H\Gamma=(V, E, r,s)$ be a finite hypergraph. Let $w \in V$ be a source. The hypergraph $H\Gamma_S$ obtained by application of \emph{move S}\index{Move!S - Removing a source} is defined as 
    \begin{align*}
        V_S:=V\setminus \{w\}, \quad
        E_S:=E\setminus \{e\;|\; w \in s(e)\},\quad
        s_S:=s_{|E_S},\quad
        r_S:=r_{|E_S}.
    \end{align*}
We call $H\Gamma_S$ the hypergraph obtained by removing the source $w$ from the hypergraph $H\Gamma$. 
\end{definition}

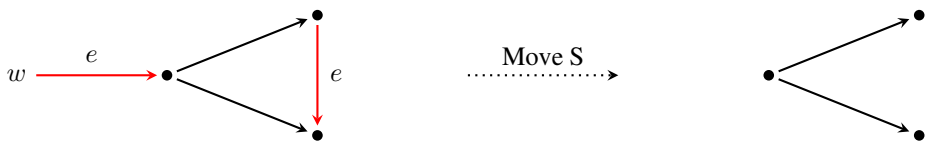
\begin{figure}[h!]
    \centering
        \begin{center}
        \begin{minipage}{\linewidth}
        \centering
            \begin{tikzpicture}[ > = stealth, auto, thick]
                \node (v0) at (0,0) {$w$};
                \node (v1) at (2,0) {};
                \fill (v1) circle[radius=2pt];
                \node (v2) at (4,0.8) {};
                \fill (v2) circle[radius=2pt];
                \node (v3) at (4,-0.8) {};
                \fill (v3) circle[radius=2pt];
                \path[->, red] (v0) edge (v1); 
                \node (e0) at (1,0.25) {$e$};
                \path[->] (v1) edge (v2); 
                \node (e1) at (2.9,0.7) {};
                \path[->] (v1) edge (v3); 
                \node (e2) at (2.9,-0.7) {};
                \path[->, red] (v2) edge (v3); 
                \node (e2) at (4.25,0) {$e$};
                
                \path[->, dotted] (6,0) edge (8,0); 
                \node (e2) at (7,0.25) {Move S};
                
                \node (v1) at (10,0) {};
                \fill (v1) circle[radius=2pt];
                \node (v2) at (12,0.8) {};
                \fill (v2) circle[radius=2pt];
                \node (v3) at (12,-0.8) {};
                \fill (v3) circle[radius=2pt];
                \path[->] (v1) edge (v2); 
                \node (e1) at (10.9,0.7) {};
                \path[->] (v1) edge (v3); 
                \node (e2) at (10.9,-0.7) {};
            \end{tikzpicture}
        \end{minipage}
    \end{center}
    \vspace{-10pt}
    \caption{Illustration of the application of move S for hypergraphs.}
\end{figure}

\begin{definition}[\textbf{Move R}]
    Let $H\Gamma=(V, E, r,s)$ be a finite hypergraph. Let $w \in V$ be a vertex that emits exactly one edge $f$ and only one vertex $x\neq w$ emits to $w$. The hypergraph $H\Gamma_R$ obtained by application of \emph{move R}\index{Move! R - Reduction at a non-sink} is defined as 
    \begin{align*}
        &V_R:=V\setminus \{w\}, \\
        &E_R:=E\setminus \left(r^{-1}(\{w\}) \cup \{f\}\right) \cup \left\{e_f\;|\; e \in E, r(e)=\{w\}\right\},\\
        &s_R(e)=s(e), \qquad s_R(e_f)=s(e),\\
        &r_R(e)=r(e), \qquad r_R(e_f)=(r(e)\setminus\{w\})\cup r(f).
    \end{align*}
\end{definition}

\begin{figure}[h!]
    \centering
       \begin{center}
        \begin{minipage}{\linewidth}
        \centering
            \begin{tikzpicture}[ > = stealth, auto, thick]
                \node (v0) at (0,0) {$x$};
                \node (v1) at (2,-0.5) {$w$};
                \node (v2) at (4,0) {};
                \fill (v2) circle[radius=2pt];
                \node (v3) at (4,-1) {};
                \fill (v3) circle[radius=2pt];
                \node (v4) at (4,1) {};
                \fill (v4) circle[radius=2pt];
                \path[->] (v0) edge (v1); 
                \path[->] (v0) edge (v4); 
                \path[->, blue, line width=1.5pt] (v1) edge (v2); 
                \node (e3) at (2.9,0) {$f$};
                \path[->, blue, line width=1.5pt] (v1) edge (v3); 
                \node (e4) at (2.9,-1) {$f$};
                
                \path[->, dotted] (6,0) edge (8,0); 
                \node (e2) at (7,0.25) {Move R};
                
                \node (v4) at (10,0) {$x$};
                \node (v5) at (12,0) {};
                \fill (v5) circle[radius=2pt];
                \node (v6) at (12,-1) {};
                \fill (v6) circle[radius=2pt];
                \node (v7) at (12,1) {};
                \fill (v7) circle[radius=2pt];
                \path[->] (v4) edge (v7);
                \path[->] (v4) edge (v5);
                \path[->] (v4) edge (v6); 
            \end{tikzpicture}
        \end{minipage}
    \end{center}
    \caption[Illustration of the application of move R for hypergraphs.]{Illustration of the application of move R for hypergraphs. Each color/thickness represents one edge.}
\end{figure}
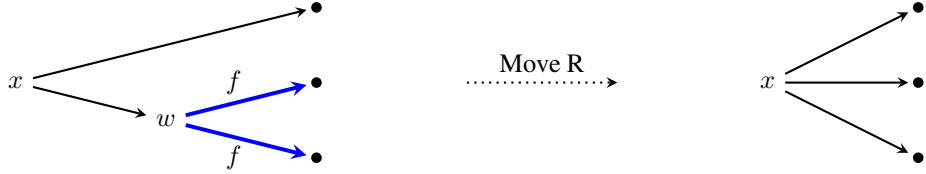

\begin{definition}[\textbf{Move O}]
    Let $H\Gamma=(V, E, r,s)$ be a finite hypergraph and $w$ be a vertex that is not a sink. We partition the set of outgoing edges in finitely many nonempty sets: 
    \begin{equation*}
        \{e \in E\; |\; w \in s(e)\}=\mathcal{E}_1 \cup \dots \cup \mathcal{E}_n.
    \end{equation*}
    The hypergraph $H\Gamma_O$ obtained by performing \emph{move O}\index{Move! O - Outsplitting} on $H\Gamma$ is defined by 
    \begin{align*}
        V_O&:=V\setminus \{w\} \cup \{w^1,\dots,w^n\},\\
        E_O&:=\{e^1 \;|\; e \in E, w \notin r(e)\} \cup \{e^1, \dots , e^n\; |\; e \in E, \; w \in r(e)\},\\
        r_O(e^i)&:=\begin{cases} r(e) &\quad \text{if} \; i=1\; \text{and}\;w \notin r(e)\\
        \left(r(e)\setminus \{w\}\right) \cup \{w^1\} &\quad \text{if} \; i=1\; \text{and}\;w \in r(e) \\ 
        w^i &\quad \text{if} \; i>1\; \text{and}\;w \in r(e),\end{cases} \\
        s_O(e^i)&:=\begin{cases} s(e) & \quad \text{if} \; w \notin s(e) \\ \left(s(e)\setminus \{w\}\right) \cup \{w^j\} &\quad \text{if} \; w \in s(e) \; \text{and}\; e \in \mathcal{E}_j. \end{cases}
    \end{align*}
    We call $H\Gamma_O$ the hypergraph obtained by \emph{outsplitting}\index{Outsplitting} 
    $H\Gamma$ at $w$.
\end{definition}

\begin{figure}[h!]
    \centering
        \begin{center}
        \begin{minipage}{\linewidth}
        \centering
            \begin{tikzpicture}[ > = stealth, auto, thick, scale=0.8]
                \node (v1) at (0,0) {};
                \fill (v1) circle[radius=2pt];
                \node (v2) at (4,0) {$w$};
                \node (v3) at (8,2) {};
                \fill (v3) circle[radius=2pt];
                \node (v4) at (8,0) {};
                \fill (v4) circle[radius=2pt];
                \node (v5) at (8,-2) {};
                \fill (v5) circle[radius=2pt];
                \path[->, bend left = 20, blue, line width=1.7pt] (v2) edge (v5);
                \path[->, bend right=20, green, line width=1.1pt] (v2) edge (v5);
                \path[->, blue, line width=1.7pt] (v2) edge (v4); 
                \path[->, orange, line width=0.4pt] (v3) edge (v4); 
                \path[->, orange, line width=0.4pt] (v2) edge (v3); 
                \path[->, in=0, out=90, loop, orange, line width=0.4pt] (v3) edge (v3);
                \path[->] (v1) edge (v2); 
                \node at (2.2,0.2) {$e$};
                \path[->] (v1) edge (v3);
                \node at (4,1.3) {$e$};

                \path[->, dashed] (9,0) edge (11,0);
                \node at (10,0.25) {Outsplitting};
                
                \node (v6) at (12,0) {};
                \fill (v6) circle[radius=2pt];
                \node (v7) at (16,0) {$w_2$};
                \node (v8) at (20,2) {};
                \fill (v8) circle[radius=2pt];
                \node (v9) at (20,0) {};
                \fill (v9) circle[radius=2pt];
                \node (v10) at (20,-2) {};
                \fill (v10) circle[radius=2pt];
                \node (v11) at (16,2) {$w_1$};
                \node (v12) at (16,-2) {$w_3$};
                \path[->, blue, line width=1.7pt] (v7) edge (v9);
                \path[->, green, line width=1.1pt] (v12) edge (v10);
                \path[->, blue, line width=1.7pt] (v7) edge (v10); 
                \path[->, orange, line width=0.4pt] (v11) edge (v8); 
                \path[->, orange, line width=0.4pt] (v8) edge (v9); 
                \path[->, in=0, out=90, loop, orange, line width=0.4pt] (v8) edge (v8);
                \path[->] (v6) edge (v11); 
                \node at (14.2,1.4) {$e^1$};
                \path[->] (v6) edge (v7);
                \node at (14.2,0.3) {$e^2$};
                \path[->] (v6) edge (v12); 
                \node at (14.2,-0.7) {$e^3$};
                \path[->, bend left=40] (v6) edge (v8); 
                \node at (16,3) {$e^1$};
            \end{tikzpicture}
        \end{minipage}
    \end{center}
    \caption[Illustration of the application of move O for hypergraphs.]{Illustration of the application of Move O for hypergraphs. Each color/thickness marks one edge and we partition the set of outgoing edges of $w$ into one-point sets.}
\end{figure}
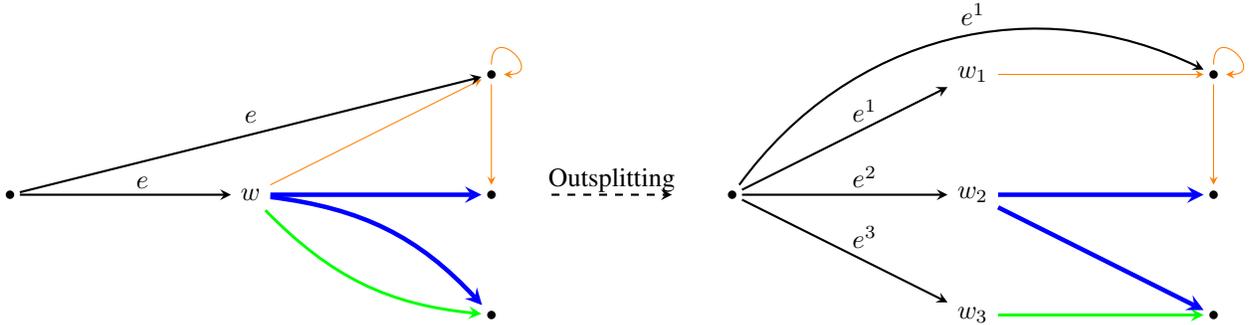

\begin{definition}[\textbf{Move I}]
    Let $H\Gamma=(V, E, r,s)$ be a finite hypergraph and $w$ be a vertex that is not a source. We partition the set of incoming edges in finitely many nonempty sets: 
    \begin{equation*}
        \{e \in E\; |\; w \in r(e)\}=\mathcal{E}_1 \cup \dots \cup \mathcal{E}_n.
    \end{equation*}
    The hypergraph $H\Gamma_I$ obtained by performing \emph{move I}\index{Move! I - Insplitting} on $H\Gamma$ is defined by 
    \begin{align*}
        V_I&:=V\setminus \{w\} \cup \{w^1,\dots,w^n\},\\
        E_I&:=\{e^1 \;|\; e \in E, w \notin s(e)\} \cup \{e^1, \dots , e^n\; |\; e \in E, \; w \in s(e)\},\\
        r_I(e^i)&:=\begin{cases} r(e) &\quad \text{if} \; i=1\; \text{and}\;w \notin r(e)\\
        \left(r(e)\setminus \{w\}\right) \cup \{w^j\} &\quad \text{if} \; e^i \in \mathcal{E}_j,\end{cases} \\
        s_I(e^i)&:=\begin{cases} s(e) & \quad \text{if} \; w \notin s(e) \\ \left(s(e)\setminus \{w\}\right) \cup \{w^1\} &\quad \text{if} \;i=1 \; \text{and}\; w \in s(e) \\
        \{w^i\} & \quad i=2,\dots, n.\end{cases} 
    \end{align*}
    We call $H\Gamma_I$ the hypergraph obtained by \emph{insplitting}\index{Insplitting} 
    $H\Gamma$ at $w$.
\end{definition}

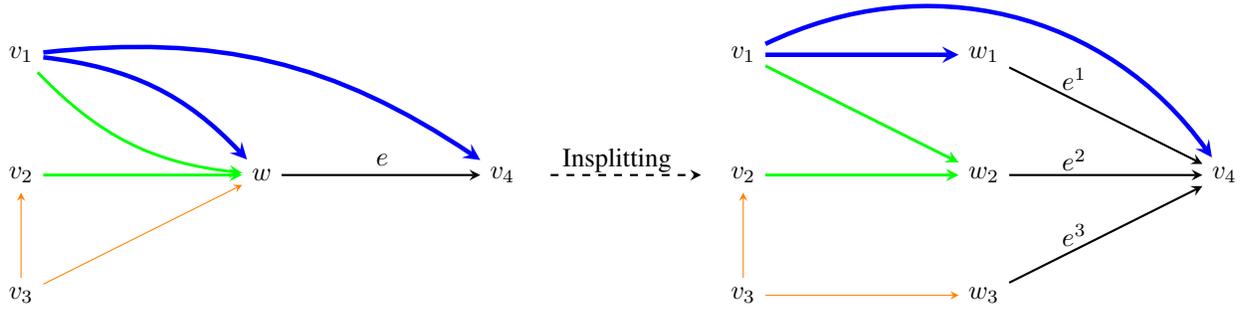
\begin{figure}[h!]
    \centering
         \begin{center}
        \begin{minipage}{\linewidth}
        \centering
            \begin{tikzpicture}[ > = stealth, auto, thick, scale=0.8]
                \node (v1) at (0,2) {$v_1$};
                \node (v2) at (0,0) {$v_2$};
                \node (v3) at (0,-2) {$v_3$};
                \node (v4) at (4,0) {$w$};
                \node (v5) at (8,0) {$v_4$};
                \node(v6) at (6,0.25) {$e$};
                \path[->, bend left = 20, blue, line width=1.7pt] (v1) edge (v4); 
                \path[->, bend right=20, green, line width=1.1pt] (v1) edge (v4); 
                \path[->, bend left = 20, blue, line width=1.7pt] (v1) edge (v5); 
                \path[->, green, line width=1.1pt] (v2) edge (v4); 
                \path[->, orange, line width=0.4pt] (v3) edge (v4); 
                \path[->, orange, line width=0.4pt] (v3) edge (v2); 
                \path[->] (v4) edge (v5); 
                
                \path[->, dashed] (8.8,0) edge (11.3,0);
                \node at (9.9,0.25) {Insplitting};
                
                 \node (v6) at (12,2) {$v_1$};
                \node (v7) at (12,0) {$v_2$};
                \node (v8) at (12,-2) {$v_3$};
                \node (v9) at (16,2) {$w_1$};
                \node (v10) at (20,0) {$v_4$};
                \node (v11) at (16,0) {$w_2$};
                \node (v12) at (16,-2) {$w_3$};
                \path[->, blue, line width=1.7pt] (v6) edge (v9); 
                \path[->, green, line width=1.1pt] (v6) edge (v11); 
                \path[->, bend left = 40, blue, line width=1.7pt] (v6) edge (v10);
                \path[->, green, line width=1.1pt] (v7) edge (v11); 
                \path[->, orange, line width=0.4pt] (v8) edge (v12); 
                \path[->, orange, line width=0.4pt] (v8) edge (v7); 
                \path[->] (v9) edge (v10); 
                \node(v13) at (17.5,1.6) {$e^1$};
                \path[->] (v11) edge (v10); 
                \node(v14) at (17.5,0.25) {$e^2$};
                \path[->] (v12) edge (v10); 
                \node(v15) at (17.5,-1) {$e^3$};
            \end{tikzpicture}
        \end{minipage}
    \end{center}
    \caption[Illustration of the application of move I for hypergraphs.]{Illustration of the application of move I for hypergraphs. Each color/thickness symbolizes one edge and $\mathcal{E}_j$ are one-point sets.}
\end{figure}

\subsection{Move S -- removing a source}

We now begin with proving Thm. \ref{thmmovessummary} step by step, or rather move by move.
For graphs  move S leads to Morita equivalent $C^*$-algebras \cite[Prop. 3.1]{sorensen_2013}, as one can show that $C^*(H\Gamma_S)$ is isomorphic to a full corner of $C^*(H\Gamma)$. For hypergraphs we only get the following proving the statement for move S in Thm. \ref{thmmovessummary}.

\begin{proposition}[Move S] \label{MoveS_homomorphism_specialcase}
    Let $H\Gamma=(V, E, r, s)$ be a finite hypergraph with source $w$ and assume that $H\Gamma$ is locally ultra in $w$. Then there is a *-homomorphism $\pi: C^*(H\Gamma_S)\rightarrow C^*(H\Gamma)$ such that the *-subalgebra $Im(\pi)$ is a full corner in $C^*(H\Gamma)$.
\end{proposition}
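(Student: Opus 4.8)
The plan is to take $\pi$ to be the map induced by the inclusion of generators and then to identify $Im(\pi)$ with the corner $PC^*(H\Gamma)P$ for the projection $P:=\sum_{v\neq w}p_v=1-p_w$ (the second equality by Proposition \ref{unit}). To construct $\pi$, I would note that the elements $\{p_v:v\in V_S\}\cup\{s_e:e\in E_S\}$ of $C^*(H\Gamma)$ are mutually orthogonal projections and partial isometries, so by the universal property it suffices to check they satisfy the relations of $C^*(H\Gamma_S)$. Since $w$ is a source we have $w\notin r(e)$ for every $e$, hence $r_S(e)=r(e)\subseteq V_S$ and (HR1) transfers verbatim; for $e\in E_S$ we have $w\notin s(e)$, so $s_S(e)=s(e)\subseteq V_S$ and (HR2a) transfers. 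The delicate point is (HR2b): for $v\neq w$ the edges with $v\in s(e)$ are the same in $H\Gamma$ and in $H\Gamma_S$, because any removed edge $e\in E\setminus E_S$ has $w\in s(e)$, and \emph{local ultraness at $w$} forces $s(e)=\{w\}$, so such an $e$ never has $v\in s(e)$ for $v\neq w$. Thus $v$ is a sink in $H\Gamma_S$ iff it is a sink in $H\Gamma$, and the two instances of (HR2b) coincide. This yields $\pi$ with $p_v\mapsto p_v$, $s_e\mapsto s_e$, and $Im(\pi)=\mathcal A:=C^*(\{p_v:v\neq w\}\cup\{s_e:e\in E_S\})$; I expect this verification of (HR2b) to be exactly where the hypothesis is used.

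Next I would show $\mathcal A\subseteq PC^*(H\Gamma)P$. Using Proposition \ref{source_range_projections} together with $s(e),r(e)\subseteq V_S$ for $e\in E_S$, one checks $Ps_e=s_e=s_eP$ and $Pp_v=p_v=p_vP$ for all generators of $\mathcal A$, so that $\mathcal A$ sits in the corner and $P$ is its unit. I would also record, for the ``extra'' edges $e\in E\setminus E_S$ (those with $s(e)=\{w\}$), the identities $s_e=p_ws_e$, $Ps_e=0$, $s_ep_w=0$ (from $r(e)\subseteq V_S$), $g\,p_w=0$ for every generator $g$ of $\mathcal A$, and $s_e^*s_f=\delta_{ef}\,q_e$ with $q_e:=\sum_{v\in r(e)}p_v\in\mathcal A$.

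The main obstacle will be the reverse inclusion $PC^*(H\Gamma)P\subseteq\mathcal A$, since for hypergraphs there is no clean spanning-by-paths description to invoke. I would prove it directly, showing that $PmP\in\mathcal A$ for every word $m=g_1\cdots g_n$ in the generators and their adjoints, by induction on $n$. If $g_1$ is a generator of $\mathcal A$, then $Pg_1=g_1$ and $g_1p_w=0$ give $PmP=g_1\,(Pg_2\cdots g_nP)$, and induction applies. If $g_1=s_e$ is extra, then $Ps_e=0$ forces $PmP=0$. If $g_1=s_e^*$ is extra, then $Ps_e^*=s_e^*=s_e^*p_w$, so $PmP=s_e^*p_w\,g_2\cdots g_nP$; here $p_wg_2=0$ unless $g_2=s_f$ is an extra edge, in which case $p_ws_f=s_f$ and $s_e^*s_f=\delta_{ef}q_e$. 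Hence either $PmP=0$ or $g_2=s_e$ and $PmP=q_e\,(Pg_3\cdots g_nP)$ (using $q_ep_w=0$), a shorter word multiplied by the $\mathcal A$-element $q_e$. Thus every extra edge is eventually absorbed into a factor $q_e\in\mathcal A$, the induction bottoms out at $P\in\mathcal A$, and taking closed linear spans gives $PC^*(H\Gamma)P=\mathcal A=Im(\pi)$. This induction is the technical heart of the argument.

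Finally I would establish fullness of $P$. Assuming $w$ is not a sink (the mild extra hypothesis implicit in the statement), (HR2a), (HR2b) and local ultraness combine via Remark \ref{orderrelation} to give $p_w=\sum_{e:\,s(e)=\{w\}}s_es_e^*$. Each $q_e\leq P$ lies in the ideal $I$ generated by $P$, hence so does $s_es_e^*=s_eq_es_e^*$, and therefore $p_w\in I$; together with $P\in I$ this yields $1=P+p_w\in I$, so $I=C^*(H\Gamma)$ and the corner $\mathcal A=PC^*(H\Gamma)P$ is full.
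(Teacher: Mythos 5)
Your proposal is correct and follows essentially the same route as the paper: the same Cuntz–Krieger family and universal-property argument (with local ultraness used exactly at (HR2b)), the same corner projection $p=\sum_{v\in V_S}p_v$, and the same fullness argument recovering $p_w$ from $\sum_{e:\,s(e)=\{w\}}s_es_e^*$ — including the implicit hypothesis that $w$ is not a sink, which you rightly make explicit. The only difference is organizational: where the paper verifies $pC^*(H\Gamma)p\subseteq Im(\pi)$ by inspecting elements $s_{\mu_1}^{\epsilon_1}\cdots s_{\mu_n}^{\epsilon_n}$ built from paths (noting that edges with source $w$ can only occur at the ends), you run a letter-by-letter induction on arbitrary words in the generators, a somewhat more self-contained version of the same absorption argument; your case analysis silently skips letters equal to $p_w$, but these are handled trivially since $Pp_w=0$ and $p_wp_w=p_w$.
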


\begin{proof}
    We show that $\{p_v,\; s_e\;|\; v \in V_S, \;e\in E_S\}$ is a Cuntz-Krieger $H\Gamma_S$-family in $C^*(H\Gamma)$. The first two hypergraph relations hold in general, even without the restriction on the source $w$ since the move does not change anything at the corresponding edges and vertices. For the third hypergraph relation we note that by the given restriction for each $v \neq w$ it follows that $e \in E_S$ for each edge $e\in E$ with $v\in s(e)$, since $w \notin s(e)$. Thus we get by the third hypergraph relation of $C^*(H\Gamma)$
    \begin{equation*}
        p_v \leq \sum_{e \in E, v \in s(e)}s_es_e^*=\sum_{e \in E_S, v \in s(e)} s_es_e^*\qquad \forall v \in V_S.
    \end{equation*}
    Hence the universal property yields the canonical *-homomorphism $\pi: C^*(H\Gamma_S)\rightarrow C^*(H\Gamma)$ sending the canonical generators $q_v \mapsto p_v$ for all $v \in V_S$ and $t_e \mapsto s_e$ for all $e \in E_S$. 
    We define the projection $p:=\sum_{v\in V_S}p_v$ and claim first that $Im(\pi)=pC^*(H\Gamma)p$. Since
        \begin{align*}
            p_v&=pp_vp && \forall v \in V_S,\\
            s_e&=\big(\sum_{v\in s(e)}p_v\big)s_e\big(\sum_{v \in r(e)}p_v\big)=p\big(\sum_{v\in s(e)}p_v\big)s_e\big(\sum_{v \in r(e)}p_v\big)p && \forall e \in E_S,
        \end{align*}
    the image of the canonical generators is contained in $pC^*(H\Gamma)p$. Hence, $Im(\pi)\subseteq pC^*(H\Gamma)p$. 
    On the other hand it holds for all paths $\mu$ in $H\Gamma$ by Proposition \ref{source_range_projections}
    \begin{itemize}
        \item $ps_{\mu}=\begin{cases} 0 & \text{if} \; s(\mu)=w\\ s_{\mu}\in C^*(p_v,s_e) & \text{else}, \end{cases}$    
        \item $ps_{\mu}^*=s_{\mu}^*$,
        \item $s_{\mu}p=s_{\mu}$,
        \item $s_{\mu}^*p=\begin{cases} 0 & \text{if} \; s(\mu)=w\\ s_{\mu}^*\in C^*(p_v,s_e) & \text{else}. \end{cases}$
    \end{itemize}
    We consider a general element $s_{\mu_1}^{\epsilon_1}...s_{\mu_n}^{\epsilon_n}\neq 0$ with paths $\mu_j$ in $H\Gamma$, $\epsilon_j\in \{1,*\}$ and $\epsilon_j\neq \epsilon_{j+1}$. Since $w$ is a source, 
    we get that only the first and last isometries in $s_{\mu_1}^{\epsilon_1}...s_{\mu_n}^{\epsilon_n}$ can correspond to edges with source $w$. Thus we can use the relations above and get that $ps_{\mu_1}^{\epsilon_1}...s_{\mu_n}^{\epsilon_n}p \in span\{p_v,\;s_e|v\in V_S, \;e\in E_S\}\subseteq Im(\pi)$. Hence $pC^*(H\Gamma)p\subseteq Im(\pi)$. Combining both parts we get the claimed equality. 
    
    To show that the corner $pC^*(H\Gamma)p$ is full, let $I$ be a closed two-sided ideal containing the corner. Thus $I$ contains $\{p_v, s_e \;|\;v \in V_S, e \in E_S\}$ by definition of $p$ and Proposition \ref{generalrelations}. Then we note, that for all $e \in E$ with $s(e)=w$ we have $p_{r(e)}\in I$ and hence $s_e=s_ep_{r(e)} \in I$.
    Given our special case, we get by combining the second and third hypergraph relation, that $p_w=\sum_{e \in E, s(e)=w}s_es_e^*\in I$, as a linear combination of elements in the ideal. Hence, $I$ contains all generators of $C^*(H\Gamma)$ and must thus be equal to it.
\end{proof}

\subsection{Move R -- reduction at a non-sink}

For graphs the definition of the edge $e_f$ in move R just yields the path $ef$. To simplify the upcoming proof, we restrict ourselves to moves R at particular vertices $w$ such that $e_f=ef$. This is not really a restriction, since we can transform any finite hypergraph into a hypergraph with this condition using the decomposition of ranges in Theorem \ref{rangesingularization}. 

\begin{proposition}[Move R]
    Let $H\Gamma=(V, E, r, s)$ be a finite hypergraph with vertex $w \in V$ that emits exactly one edge $f$ and only one vertex $x$ emits to $w$. 
    Then there is a *-homomorphism $\pi: C^*(H\Gamma_R)\rightarrow C^*(H\Gamma)$ such that the *-subalgebra $Im(\pi)$ is a full corner in $C^*(H\Gamma)$.
\end{proposition}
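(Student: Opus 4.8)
The plan is to mimic the proof of Move S (Proposition \ref{MoveS_homomorphism_specialcase}): I construct a Cuntz-Krieger $H\Gamma_R$-family inside $C^*(H\Gamma)$, obtain $\pi$ from the universal property, and then identify $Im(\pi)$ with the corner $pC^*(H\Gamma)p$ for $p:=\sum_{v\neq w}p_v=1-p_w$ and show it is full. Throughout I use the standing hypothesis that $H\Gamma$ is locally ultra at $w$ (as in Theorem \ref{thmmovessummary}), so that $s(f)=\{w\}$, and that ranges have been decomposed via Proposition \ref{rangesingularization} so that every edge into $w$ has range exactly $\{w\}$ and $e_f$ is literally the path $ef$; moreover ``only one vertex $x\neq w$ emits to $w$'' forces $s(e)=\{x\}$ for every such in-edge $e$, and in particular $w\notin r(f)$ (otherwise $w$ would emit to $w$).

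The linchpin is the identity $s_fs_f^*=p_w$: since $w$ emits only $f$, relation (HR2b) at the non-sink $w$ gives $p_w\leq s_fs_f^*$, while (HR2a) together with $s(f)=\{w\}$ gives $s_fs_f^*\leq p_w$, whence equality by Remark \ref{orderrelation}. Using this I set $Q_v:=p_v$ for $v\in V_R$, $T_g:=s_g$ for the surviving edges $g\in E\setminus(r^{-1}(\{w\})\cup\{f\})$, and $T_{e_f}:=s_es_f$ for each in-edge $e$. Then $T_{e_f}$ is a partial isometry, since $T_{e_f}^*T_{e_f}=s_f^*s_e^*s_es_f=s_f^*p_ws_f=s_f^*s_f=\sum_{v\in r(f)}p_v$; this computation also gives (HR1), the off-diagonal and mixed terms vanishing by (HR1) of $C^*(H\Gamma)$. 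For (HR2a) I use $s_fs_f^*=p_w$ and $s_ep_w=s_e$ to obtain $T_{e_f}T_{e_f}^*=s_e(s_fs_f^*)s_e^*=s_es_e^*\leq\sum_{v\in s(e)}p_v$. For (HR2b): vertices $v\neq x,w$ emit only surviving edges, so their relation transfers directly; at $x$ the crucial point is that $T_{e_f}T_{e_f}^*=s_es_e^*$ turns the sum over $H\Gamma_R$-edges emitted by $x$ back into $\sum_{h\in E,\,x\in s(h)}s_hs_h^*\geq p_x$. The universal property yields $\pi:C^*(H\Gamma_R)\to C^*(H\Gamma)$ with $q_v\mapsto Q_v$ and $t_\alpha\mapsto T_\alpha$.

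For $Im(\pi)\subseteq pC^*(H\Gamma)p$ I check generators: $p_v=pp_vp$, surviving $s_g=ps_gp$ (as $s(g),r(g)\subseteq V_R$), and $s_es_f=p(s_es_f)p$ (using $s(e)=\{x\}\subseteq V_R$ and $w\notin r(f)$). The reverse inclusion $pC^*(H\Gamma)p\subseteq Im(\pi)$ is where the real work lies and is the main obstacle. Here I record the compression rules $ps_f=0$, $s_ep=0$, $ps_e^*=0$, $s_f^*p=0$, and analyze a nonzero reduced word $p\,x_1\cdots x_n\,p$. The key observation is that $w$ is incident only to $f$ and to the in-edges, so the only factors ``touching $w$'' are $s_e,s_e^*,s_f,s_f^*$; each carries the level $p_w$ on exactly one side, and for the product to be nonzero every such factor in the interior of the word must be adjacent to another $w$-touching factor on its $w$-side. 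The possible adjacent pairs are exactly $s_es_{e'}^*=T_{e_f}T_{e'_f}^*$ (via $p_w=s_fs_f^*$), $s_es_f=T_{e_f}$, $s_f^*s_{e'}^*=T_{e'_f}^*$, and $s_f^*s_f=\sum_{v\in r(f)}p_v$ (which collapses), all lying in $Im(\pi)$; any unpaired $w$-touching factor can only sit at the two ends, where the outer $p$'s annihilate it. Grouping the word accordingly expresses $p\,x_1\cdots x_n\,p$ as a product of surviving generators and of the displayed expressions, all in $Im(\pi)$. Justifying the ``forced adjacency'' claim and that this grouping is unambiguous is the technical heart of the argument.

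Finally, to see the corner is full, let $I\trianglelefteq C^*(H\Gamma)$ be a closed two-sided ideal with $pC^*(H\Gamma)p\subseteq I$. Then $I$ contains every $T_{e_f}=s_es_f$; multiplying on the right by $s_f^*$ and using $s_ep_w=s_e$ gives $s_e=T_{e_f}s_f^*\in I$ for each in-edge $e$ (at least one exists, since $x$ emits to $w$), whence $p_w=s_e^*s_e\in I$ and then $s_f=p_ws_f\in I$. As $I$ already contains all $p_v$ with $v\neq w$ and all surviving $s_g$, it contains every generator of $C^*(H\Gamma)$, so $I=C^*(H\Gamma)$ and the corner $Im(\pi)=pC^*(H\Gamma)p$ is full.
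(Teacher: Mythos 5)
Your proposal is correct and follows essentially the same route as the paper's proof: the same key identity $s_fs_f^*=p_w$ derived from (HR2a)/(HR2b) under the locally ultra assumption, the same Cuntz-Krieger $H\Gamma_R$-family ($Q_v=p_v$, $T_{e_f}=s_es_f$), the same corner $p=\sum_{v\in V_R}p_v$, and the same pairing analysis of compressed words (your adjacency pairs $s_es_{e'}^*=T_{e_f}T_{e'_f}^*$, $s_es_f=T_{e_f}$, $s_f^*s_{e'}^*=T_{e'_f}^*$, $s_f^*s_f=\sum_{v\in r(f)}p_v$ are exactly the paper's cases (i)--(iv), stated letter-by-letter instead of path-by-path). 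The only cosmetic difference is in the fullness step, where you recover $s_e=T_{e_f}s_f^*$ by multiplying an element of the ideal by $s_f^*$, whereas the paper uses $s_e=p_{s(e)}s_e$ with $p_{s(e)}=p_x\in I$; both are equally valid.
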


\begin{proof}
    The elements $\{Q_v\;|\; v\in V_R\}$ and $\{T_y \;|\; y \in E_R\}$ defined as 
        \begin{align*}
            &Q_v:=p_v, \\
            &T_y:=\begin{cases} s_e & \text{if}\; y=e\in E\setminus \left(r^{-1}(\{w\}) \cup \{f\}\right) \\ s_{ef} & \text{if}\; y=e_f \in \left\{e_f\;|\; e \in E, r(e)=\{w\}\right\}\end{cases} 
        \end{align*}
    form a Cuntz-Krieger $H\Gamma_R$-family in $C^*(H\Gamma)$. Indeed, the elements $Q_v$ are clearly mutually orthogonal projections and the elements $T_y$ are partial isometries since $ef$ are a quasi perfect paths.
    The first hypergraph relation for $e\in E_R$ follows directly from the first hypergraph relation for $H\Gamma$, since $w\notin r(e)$ implies that the range is completely contained in $V_R$. For $e_f\in E_R$ we have 
        \begin{align*}              T_{e_f}^*T_{e_f}=s_{ef}^*s_{ef}=s_f^*s_e^*s_es_f=s_f^*p_{r(e)}s_f=s_f^*p_wp_{r(e)}s_f=s_f^*p_ws_f=s_f^*s_f=p_{r(f)}=Q_{r_R(e_f)}.
        \end{align*}
    The condition $T_y^*T_z=0$ for $y\neq z$ follows directly from $s_e^*s_g=0$ for $e\neq g$. The second hypergraph relation is again clear for $e \in E_R$. For $e_f\in E_R$ we get 
        \begin{align*}
            T_{e_f}T_{e_f}^* = s_{ef}s_{ef}^*=s_es_fs_f^*s_e^*\leq s_ep_{s(f)}s_e^*=s_ep_wp_{s(f)}s_e^*=s_es_e^*\leq p_{s(e)}=Q_{s_R(e_f)}
        \end{align*}
    while we used in the last step that $w \notin s(e)$ since $e\neq f$. It remains to check the third hypergraph relation. We have for all vertices $v \in V_R$
        \begin{align*}
            Q_v&=p_v\\
            &\leq\sum_{e \in E, v \in s(e)}s_es_e^*\\
            &=\sum_{e \in E, v \in s(e), w \notin r(e)}s_es_e^*+\sum_{e \in E, v \in s(e), w \in r(e)}s_es_e^*.
    \intertext{At this stage we need the restriction that $s(f)=w$ to get $s_fs_f^*=p_w$ and thus by Lemma \ref{source_range_projections} that $s_es_e^*=s_es_fs_f^*s_f$. With this we get}
            &=\sum_{e \in E, v \in s(e), w \notin r(e)}T_eT_e^*+\sum_{e \in E, v \in s(e), w \in r(e)}T_{e_f}T_{e_f}^*\\
            &=\sum_{y \in E_R, v \in s_R(y)}T_yT_y^*.
        \end{align*}
    Thus, we get by the universal property the *-homomorphism  $\pi: C^*(H\Gamma_R)\rightarrow C^*(H\Gamma)$ which maps the generators $q_v \mapsto Q_v$ for all $v \in V_R$ and $t_y \mapsto T_y$ for all $y \in E_R$.

    We define the projection $p:=\sum_{v \in V_R}p_v$. 
    Then
        \begin{align*}
            Q_v&=p_v=pp_vp,\\
            T_e&=s_e=p_{s(e)}s_ep_{r(e)}=pp_{s(e)}s_ep_{r(e)}p, \\
            T_{ef}&=s_es_f=p_{s(e)}s_es_fp_{r(f)}=pp_{s(e)}s_es_fp_{r(f)}p,
        \end{align*}
    where we used that $w \notin s(e)$ for $e \neq f$, $w \notin r(e)$ for $e \in E\setminus \left(r^{-1}(\{w\}) \cup \{f\}\right)$ and that $w \notin r(f)$ as $s(f)=w \neq x$. Thus $Im(\pi)$ is a subset of the corner $pC^*(H\Gamma)p$.
    
    To show that the corner is contained in $Im(\pi)$ we first consider some properties of the crucial edges in $r^{-1}(\{w\}) \cup \{f\}$ and the interaction of the corresponding partial isometries. Let $\mu=\mu_1\dots \mu_n$ be a path in $H\Gamma$. We then have:
    
    \begin{itemize}
        \item[(i)] If $\mu_1=f$, it holds by Lemma \ref{source_range_projections} that $ps_\mu=pp_ws_\mu=0$ and similarly $s_\mu^*p=0$. The first hypergraph relation  gives that $s_f^*s_e=s_e^*s_f=\delta_{e,f}p_{r(f)}=Q_{r(f)}$ for all $e \in E$.
        \item[(ii)] If $\mu_j=f$, for $j=2,\dots n$, by the definition of paths we must have $\mu_{j-1}\in r^{-1}(\{w\})$ and hence $s_{\mu_{j-1}}s_{\mu_j}=T_{{\mu_{j-1}}_f}$.
        \item[(iii)] If $\mu_j \in r^{-1}(\{w\})$, for $j=1,\dots, n-1$, we get again by the definition of paths that $\mu_{j+1}=f$ and hence $s_{\mu_{j}}s_{\mu_{j+1}}=T_{{\mu_j}_f}$.
        \item[(iv)] If $\mu_n \in r^{-1}(\{w\})$, we get by Lemma \ref{source_range_projections} that $s_\mu p=s_\mu p_w p=0$ and similarly $p s_\mu^*=0$. Furthermore, by definition of paths and the fact that only one vertex emits to $w$, $s_{\mu_n} s_e^*\neq 0$ if and only if $e \in r^{-1}(\{w\})$. Since in our special case $s_fs_f^*=p_w$ and $r(\mu_n)=\{w\}$ we get for $e \in r^{-1}(\{w\})$ again by Lemma \ref{source_range_projections} that $s_{\mu_n} s_e^*=s_{\mu_n}p_w s_e^*=s_{\mu_n} s_fs_f^*s_e^*=T_{{\mu_n}_f}T_{e_f}^*$. Similarly we can show that $s_es_f^*=T_{e_f}T_{{\mu_n}_f}^*$.
    \end{itemize}
    
Combining these properties, we get $pSp \in Im(\pi)$, for a general element $S:=s_{\mu_1}^{\epsilon_1}\dots s_{\mu_n}^{\epsilon_n} \in C^*(H\Gamma)$ where $\mu_1,\dots, \mu_n$ are paths in $H\Gamma$ and $\epsilon_j\in \{1,*\}$, $\epsilon_j\neq \epsilon_{j+1}$. Prop. \ref{Hypergraph_Def} then yields the claim. \\

It remains to show that the corner is full. Let $I$ be a closed two-sided ideal containing the corner $pC^*(H\Gamma)p$. Then $I$ contains all projections corresponding to the vertices in $V_R$. Consider $e\in r^{-1}(\{w\})$. Then $e \neq f$ and $w \notin s(e)$ and hence $p_{s(e)} \in I$. Thus $s_e=p_{s(e)}s_e \in I$ by properties of the ideal. The first Cuntz-Krieger relation then gives $p_w=s_e^*s_e\in I$. Hence by Proposition \ref{unit}, the ideal contains the unit and hence it must be all of $C^*(H\Gamma)$. Thus the corner is not contained in a proper closed two sided ideal and is thus full. 
\end{proof}

\subsection{Move O -- outsplitting}

\begin{proposition}[Move O]
    Let $H\Gamma=(V, E, r,s)$ be a finite hypergraph and $w$ be a vertex that is not a sink and let $H\Gamma$ be locally ultra at $w$. Let $H\Gamma_O$ be the hypergraph obtained by outsplitting $H\Gamma$ at $w$. Then $C^*(H\Gamma) \cong C^*(H\Gamma_O)$.
\end{proposition}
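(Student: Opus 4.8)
The plan is to exhibit mutually inverse $*$-homomorphisms between $C^*(H\Gamma)$ and $C^*(H\Gamma_O)$ by building a Cuntz-Krieger family for each hypergraph inside the $C^*$-algebra of the other and invoking the universal property twice. Write $\{p_v,s_e\}$ for the canonical generators of $C^*(H\Gamma)$ and $\{q_v,t_\epsilon\}$ for those of $C^*(H\Gamma_O)$ (with $v\in V_O$, $\epsilon\in E_O$).

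The heart of the construction is a decomposition of the projection $p_w$ matching the partition $\{e\in E\mid w\in s(e)\}=\mathcal E_1\cup\dots\cup\mathcal E_n$. Since $H\Gamma$ is locally ultra at $w$, every edge $e$ with $w\in s(e)$ satisfies $s(e)=\{w\}$, so (HR2a) gives $s_es_e^*\le p_w$, while (HR1) gives $s_e^*s_f=0$ for $e\neq f$ and hence $s_es_e^*s_fs_f^*=s_e(s_e^*s_f)s_f^*=0$. Combining (HR2a), (HR2b) and Remark \ref{orderrelation} we obtain $p_w=\sum_{w\in s(e)}s_es_e^*$ as an \emph{orthogonal} sum. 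I therefore set $Q_v:=p_v$ for $v\neq w$ and $Q_{w^j}:=\sum_{e\in\mathcal E_j}s_es_e^*$, so that $Q_{w^1},\dots,Q_{w^n}$ are mutually orthogonal with $\sum_jQ_{w^j}=p_w$. For the edges I split the range: writing $R_e^i:=\sum_{v\in r_O(e^i)}Q_v$ (so that $\sum_iR_e^i=p_{r(e)}=s_e^*s_e$ by the definition of the $r_O(e^i)$), I put $T_{e^i}:=s_eR_e^i$. A direct check then shows that $\{Q_v,T_\epsilon\}$ is a Cuntz-Krieger $H\Gamma_O$-family: (HR1) follows from $T_{e^i}^*T_{e^{i'}}=R_e^i\,s_e^*s_e\,R_e^{i'}=R_e^iR_e^{i'}=\delta_{ii'}R_e^i$ together with orthogonality of distinct $s_e,s_f$; (HR2a) uses $s_eR_e^is_e^*\le s_es_e^*\le Q_{w^j}$ when $e\in\mathcal E_j$; and (HR2b) follows since $\sum_iT_{e^i}T_{e^i}^*=s_e\big(\sum_iR_e^i\big)s_e^*=s_es_e^*$. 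The universal property yields $\phi:C^*(H\Gamma_O)\to C^*(H\Gamma)$ with $q_v\mapsto Q_v$ and $t_\epsilon\mapsto T_\epsilon$.

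Conversely, I reverse the splitting: set $P_v:=q_v$ for $v\neq w$, $P_w:=\sum_jq_{w^j}$, and $S_e:=\sum_it_{e^i}$, the sum ranging over the copies of $e$. The key point for (HR2a) is that the sets $r_O(e^i)$ are pairwise disjoint, so Proposition \ref{source_range_projections} gives $t_{e^i}t_{e^{i'}}^*=0$ for $i\neq i'$; hence $S_eS_e^*=\sum_it_{e^i}t_{e^i}^*$ is an orthogonal sum of projections each dominated by $\sum_{v\in s(e)}P_v$, and therefore $S_eS_e^*\le\sum_{v\in s(e)}P_v$. Reassembling the range projections gives $S_e^*S_e=\sum_{v\in r(e)}P_v$, and (HR2b) at $w$ is obtained by summing the relations at the (non-sink) vertices $w^1,\dots,w^n$. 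This produces $\psi:C^*(H\Gamma)\to C^*(H\Gamma_O)$ with $p_v\mapsto P_v$ and $s_e\mapsto S_e$.

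Finally I verify $\phi$ and $\psi$ are mutually inverse by evaluating on generators. The one nonroutine identity is $\psi(Q_{w^j})=q_{w^j}$: since distinct edges of $H\Gamma_O$ again have orthogonal range projections by (HR1), Remark \ref{orderrelation} upgrades (HR2b) at $w^j$ to the equality $q_{w^j}=\sum_{w^j\in s_O(\epsilon)}t_\epsilon t_\epsilon^*=\psi(Q_{w^j})$; all remaining checks collapse to $\sum_iR_e^i=p_{r(e)}$ and $\sum_jQ_{w^j}=p_w$. I expect the main obstacle to be exactly this orthogonal decomposition of $p_w$: it is the single place where local ultra-ness at $w$ is indispensable, since for an edge with $w\in s(e)$ but $s(e)\neq\{w\}$ one would only have $s_es_e^*\le\sum_{v\in s(e)}p_v$ rather than $s_es_e^*\le p_w$, and the splitting of $p_w$ into the $Q_{w^j}$ would break down. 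Once that decomposition is secured, the remaining verifications are bookkeeping driven by the disjointness of the sets $r_O(e^i)$ and the partition $\bigsqcup_j\mathcal E_j$.
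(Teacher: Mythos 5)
Your proposal is correct and follows essentially the same route as the paper's proof: you build the same two Cuntz--Krieger families (your unified $T_{e^i}=s_eR_e^i$ coincides with the paper's case-split definition, since $s_eR_e^1=s_ep_{r(e)}=s_e$ when $w\notin r(e)$), hinge everything on the orthogonal decomposition $p_w=\sum_{e,\,w\in s(e)}s_es_e^*$ forced by local ultra-ness, and apply the universal property twice. If anything, your write-up is slightly more complete than the paper's, which dismisses the mutual-inverse check as ``straightforward calculations,'' whereas you supply the one genuinely non-routine identity $\psi(Q_{w^j})=q_{w^j}$ via Remark \ref{orderrelation}.
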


\begin{proof}
    Let $\{q_v \;|\ v \in V_O\}, \; \{t_e \; | \; e \in E_O\}$ be the universal Cuntz-Krieger $H\Gamma_O$-family. We define a Cuntz-Krieger $H\Gamma$-family in $H\Gamma_O$ by
    \begin{align*}
        P_v&:=\begin{cases} q_v & \quad \text{if} \; v \neq w \\ \sum_{i=1}^n q_{w^i} & \quad\text{if} \; v = w,\end{cases}\\
        S_e&:=\begin{cases} t_e & \quad \text{if} \; w \notin r(e) \\ \sum_{i=1}^n t_{e^i} & \quad \text{if} \; w \in r(e)\end{cases}.
    \end{align*}
    
    Indeed, the elements $P_v$ are mutually orthogonal projections and the elements $S_e$ are clearly partial isometries if $w \notin r(e)$. For the other case we note that the ranges of $e^1,\dots , e^n$ are disjoint, to get the required result. 
    
    For the first hypergraph relation the case $w \notin r(e)$ is straightforward. For $w \in r(e)$ we get:
        \begin{align*}
            S_e^*S_e
            =\sum_{i=1}^nt_{e^i}^*t_{e^i}
            =\sum_{i=1}^nq_{r_O(e^i)}=q_{r(e)\setminus w}+ \sum_{i=1}^nq_{w^i}=P_{r(e)\setminus w}+P_w
            =P_{r(e)}.
        \end{align*}
    By the hypergraph relations of $H\Gamma_O$ we know that $t_{e^i}^*t_{f^j}=0$ for $e \neq f$ or $i \neq j$, which implies $S_e^*S_f=0$ for $e \neq f$. 
    
    For the second hypergraph relation we get for $w \notin r(e)$: 
        \begin{align*}
            S_eS_e^*
            =t_{e^1}t_{e^1}^*\leq q_{s_O(e^1)}
            =\begin{cases} P_{s(e)} & e \notin \mathcal{E}_j\\q_{s(e)\setminus w} + q_{w^j}\leq q_{s(e)\setminus w} +\sum_{j=1}^nq_{w^j}
            =P_{s(e)}& e \in \mathcal{E}_j.\end{cases}
        \end{align*}
    For $w \in r(e)$ it follows using the first equation:
        \begin{align*}
            S_eS_e^*
            =\sum_{i=1}^nt_{e^i}t_{e^i}^*
            \leq\sum_{i=1}^nq_{s_O(e^i)}.
        \end{align*}
    Similar as in the equation above we get $q_{s_O(e^i)}\leq P_{s(e)}$ for all $i=1,\dots n$. Using the definition of the order it follows that $\sum_{i=1}^nq_{s_O(e^i)}\leq P_{s(e)}$ which yields the required result. 
    
    We finally tackle the last hypergraph relation. For $v \neq w$ we have 
        \begin{align*}
            P_v &=q_v\\
            &\leq \sum_{x \in V_O, v \in s_O(x)}t_xt_x^*\\
            &= \sum_{e \in E, v \in s(e), w \notin r(e)}t_{e^1}t_{e^1}^*+\sum_{e \in E, v \in s(e), w\in r(e)}\sum_{i=1}^nt_{e^i}t_{e^i}^*\\
            &=\sum_{e \in E, v \in s(e), w \notin r(e)}t_{e^1}t_{e^1}^*+\sum_{e \in E, v \in s(e), w\in r(e)} \left(\sum_{i=1}^nt_{e^i}\right) \left(\sum_{i=1}^nt_{e^i}\right)^*\\
            &=\sum_{e \in E, v \in s(e), w \notin r(e)}S_{e}S_{e}^*+\sum_{e \in E, v \in s(e), w\in r(e)}S_{e}S_{e}^*\\
            &=\sum_{e \in E, v \in s(e)}S_{e}S_{e}^*.
        \end{align*}
    For $v=w$ we can duplicate the above calculation to get for each $i=1,\dots,n$ that 
        \begin{align*}
            q_{w^i}\leq \sum_{e \in \mathcal{E}_i}S_{e}S_{e}^*.
        \end{align*}
    Using that $\{e \in E\; |\; w \in s(e)\}=\mathcal{E}_1\cup \dots \cup \mathcal{E}_n$ and $S_eS_e^*$ are mutually orthogonal projections we get 
        \begin{align*}
            P_w=\sum_{i=1}^nq_{w^i}\leq \sum_{e \in E, w \in s(e)}S_{e}S_{e}^*,
        \end{align*}
    which completes the proof of the Cuntz-Krieger family.

    To obtain a Cuntz-Krieger $H\Gamma_O$-family in $H\Gamma$ we define
    \begin{align*}
        Q_v&:=\begin{cases} p_v & \quad \text{if} \; v \neq w^j \\ \sum_{e \in \mathcal{E}_j} s_es_e^* & \quad\text{if} \; v = w^j,\end{cases}\\
        T_{e^i}&:=\begin{cases} s_e & \qquad \;\text{if} \; w^j \notin r_O(e^i) \\ s_eQ_{r_O(e^i)} & \qquad \;\text{if} \; w^j \in r_O(e^i).\end{cases}
    \end{align*}

    Since we assumed that for all $e \in E$ with $w \in s(e)$ it follows $w=s(e)$, it follows for all vertices $v\neq w$ that $(s_es_e^*)p_v=0=p_v(s_es_e^*)$. Hence, since the sets $\mathcal{E}_j$ are disjoint and the projections $p_v$ are mutually orthogonal, we get using the first hypergraph relation of $C^*(H\Gamma)$ that the projections $Q_v$ are mutually orthogonal. Furthermore we get that $Q_{w^i}p_w=Q_{w^i}$, which will be useful later on. 
    
    The first relation for $e^1 \in E_O$ with $w\notin r(e)$ is obvious. In case that $w \in r(e)$ we get for $e^1$
        \begin{align*}
            T_{e^1}^*T_{e^1}&=Q_{r_O(e^1)}s_e^*s_eQ_{r_O(e^1)}\\
            &=Q_{r_O(e^1)}p_{r(e)}Q_{r_O(e^1)}\\
            &=\left(Q_{w^1}+p_{r(e)\setminus\{w\}}\right)p_{r(e)}\left(Q_{w^1}+p_{r(e)\setminus\{w\}}\right)\\
            &= Q_{w^1}+p_{r(e)\setminus\{w\}}\\
            &=Q_{r_O(e^1)},
        \end{align*}
    using that $Q_{w^j}p_v=\delta_{v,w}Q_{w^j}$. The case for $i=2,\dots,n$ follows similarly using $Q_{r_O(e^i)}=Q_{w^i}$.
    By the first hypergraph relation for $C^*(H\Gamma)$ and the orthogonality of the projections we get $T_{e^i}^*T_{f^j}=0$ for $e^i\neq f^j$. Furthermore, using these results it is straightforward to see that the elements $T_{e^i}$ are partial isometries.
    
    For the second hypergraph relation we again consider the case $i=1$ and $w \in r(e)$ first. We have
        \begin{align*}
            T_{e^1}T_{e^1}^*=s_es_e^*\leq \begin{cases} p_{s(e)}=Q_{s_O(e^1)} & \text{if} \; s(e)\neq w\\
            \sum_{f \in \mathcal{E}_j}s_fs_f^*=Q_{w^j}=Q_{s_O(e^1)}& \text{if} \; s(e)= w,\end{cases}
        \end{align*}
    where we used the assumption that either $w \notin s(e)$ or $w=s(e)$ for all $e \in E$ and that the elements $s_fs_f^*$ are mutually orthogonal projections. For $w =s(e)$ we have using $Q_{r_O(e^i)}\leq 1$
        \begin{align*}
            T_{e^i}T_{e^1}^*=s_eQ_{r_O(e^i)}s_e^*\leq s_es_e^*
        \end{align*}
    which can be estimated similar to the previous case.
    
    Finally we check the third hypergraph relation. We note, that the assumption that $w \in s(e)$ implies $w =s(e)$ leads to $p_w=\sum_{j=1}^nQ_{w^j}$. For $v \in V\setminus \{w\}$ we then have using the third hypergraph relation for $C^*(H\Gamma)$
        \begin{align*}
            Q_v&=p_v\\
            &\leq \sum_{e \in E, v \in s(e)}s_es_e^*\\
            &=\sum_{e \in E, v\in s(e), w\notin r(e)}T_{e^1}T_{e^1}^* &&+ \sum_{e \in E, v\in s(e), w\in r(e)}s_ep_{r(e)}s_e^*\\
            &=\sum_{e \in E, v\in s(e), w\notin r(e)}T_{e^1}T_{e^1}^* &&+ \sum_{e \in E, v\in s(e), w\in r(e)}s_e\left(\sum_{j=1}^nQ_{w^j}+p_{r(e)\setminus \{w\}}\right)s_e^*\\
            &=\sum_{e \in E, v\in s(e), w\notin r(e)}T_{e^1}T_{e^1}^* &&+ \sum_{e \in E, v\in s(e), w\in r(e)}\left(s_e(Q_{w^1}+p_{r(e)\setminus \{w\}})s_e^*+\sum_{j=2}^ns_eQ_{w^j}s_e^* \right)\\
            &=\sum_{e \in E, v\in s(e), w\notin r(e)}T_{e^1}T_{e^1}^* &&+ \sum_{e \in E, v\in s(e), w\in r(e)}\sum_{j=1}^nT_{e^j}T_{e^j}^*\\
            &=\sum_{e^i \in E_O, v \in s_O(e^i)}T_{e^i}T_{e^i}^*.
        \end{align*}
    For $v = w^j$ we have by definition that $Q_{w^j}=\sum_{e \in E, w^j \in s(e)}s_es_e^*$. Hence the same calculation as above yields the required result.

    Using the above Cuntz-Krieger families we get the canonical *-homomorphism 
        \begin{align*}
            \pi&: C^*(H\Gamma) \rightarrow C^*(H\Gamma_O), \quad p_v \mapsto P_v, \quad s_e \mapsto S_e,\\
            \tilde{\pi}&: C^*(H\Gamma_O) \rightarrow C^*(H\Gamma), \quad q_v \mapsto Q_v, \quad t_e \mapsto T_e.
        \end{align*}
    Straightforward calculations show that both *-homomorphisms are inverse to each other on the generators. Thus they are inverse on the whole $C^*$-algebras and we get the required isomorphism. 
\end{proof}

\subsection{Move I -- insplitting}

Before taking care of move I, we have a look at the indelay which introduces vertices to delay the arrival of an edge on its range. One can define this even in a more general setting with a so called Drinen range vector as done in \cite[Ch. 4]{flowequivalence}. We only consider the special case needed for the connection to move I, which we consider afterwards. The constructions and proofs in the following section are adapted from \cite[Ch. 4 and 5]{flowequivalence} and extended to the hypergraph setting. The upcoming proofs are again quite technical and deal with similar case distinctions as seen for move O. We will thus only highlight the critical steps.

\begin{definition}[\textbf{Indelay}]
    Let $H\Gamma=(V, E, r,s)$ be a finite hypergraph and $w$ be a vertex that is not a source. We partition the set of incoming edges in finitely many nonempty sets: 
    \begin{equation*}
        \{e \in E\; |\; w \in r(e)\}=\mathcal{E}_1 \cup \dots \cup \mathcal{E}_n.
    \end{equation*}
    The hypergraph $H\Gamma_{D}$ obtained by an \emph{indelay}\index{Indelay} of $H\Gamma$ at $w$ is defined by 
    \begin{align*}
        &V_{D}:=V\setminus \{w\} \cup \{w^1,\dots,w^n\},\\
        &E_D:=E \cup \{f^1,\dots f^n\},\\
        &r_{D}(e):=\begin{cases} r(e) &\quad \text{if} \;w \notin r(e)\\
        \left(r(e)\setminus \{w\}\right) \cup \{w^j\} &\quad \text{if} \; w \in r(e), \end{cases}\\ 
        &r_{D}(f^j):=w^j,\\
        &s_{D}(e):=\begin{cases} s(e) & \quad \text{if} \; w \notin s(e) \\ \left(s(e)\setminus \{w\}\right) \cup \{w^1\} &\quad \text{if} \; w \in s(e),\end{cases} \\
        &s_{D}(f^j):=w^{j+1}.
    \end{align*}
\end{definition}

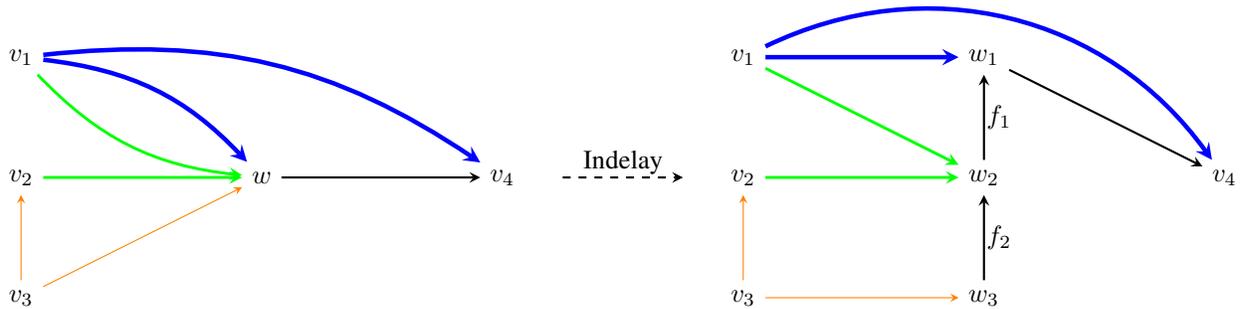
\begin{figure}[h!]
        \centering
             \begin{center}
        \begin{minipage}{\linewidth}
        \centering
            \begin{tikzpicture}[ > = stealth, auto, thick, scale=0.8]
                \node (v1) at (0,2) {$v_1$};
                \node (v2) at (0,0) {$v_2$};
                \node (v3) at (0,-2) {$v_3$};
                \node (v4) at (4,0) {$w$};
                \node (v5) at (8,0) {$v_4$};
                \path[->, bend left = 20, blue, line width=1.7pt] (v1) edge (v4); 
                \path[->, bend right=20, green, line width=1.1pt] (v1) edge (v4); 
                \path[->, bend left = 20, blue, line width=1.7pt] (v1) edge (v5); 
                \path[->, green, line width=1.1pt] (v2) edge (v4); 
                \path[->, orange, line width=0.4pt] (v3) edge (v4); 
                \path[->, orange, line width=0.4pt] (v3) edge (v2); 
                \path[->] (v4) edge (v5); 
                
                \path[->, dashed] (9,0) edge (11,0);
                \node at (10,0.25) {Indelay};
                
                \node (v6) at (12,2) {$v_1$};
                \node (v7) at (12,0) {$v_2$};
                \node (v8) at (12,-2) {$v_3$};
                \node (v9) at (16,2) {$w_1$};
                \node (v10) at (20,0) {$v_4$};
                \node (v11) at (16,0) {$w_2$};
                \node (v12) at (16,-2) {$w_3$};
                \path[->, blue, line width=1.7pt] (v6) edge (v9); 
                \path[->, green, line width=1.1pt] (v6) edge (v11); 
                \path[->, bend left = 40, blue, line width=1.7pt] (v6) edge (v10);
                \path[->, green, line width=1.1pt] (v7) edge (v11); 
                \path[->, orange, line width=0.4pt] (v8) edge (v12); 
                \path[->, orange, line width=0.4pt] (v8) edge (v7); 
                \path[->] (v9) edge (v10); 
                \path[->] (v11) edge (v9); 
                \node(v13) at (16.25,1) {$f_1$};
                \path[->] (v12) edge (v11); 
                \node(v14) at (16.25,-1) {$f_2$};
            \end{tikzpicture}
        \end{minipage}
    \end{center}
        \caption[Illustration of the application of the indelay.]{Illustration of the application of the indelay. The colored edges of different thickness symbolize one edge each. The incoming edges are partitioned into one-point sets.}
    \end{figure}

The next proposition uses some kind of locally ultra property at the range rather than at the source.

\begin{proposition}\label{surjective_hom_indelay} 
    Let $H\Gamma=(V, E, r,s)$ be a finite hypergraph, $w$ be a vertex that is not a source such that $w \in r(e)$ implies $r(e)=\{w\}$. $H\Gamma_{D}$ be the hypergraph obtained by an indelay of $H\Gamma$ at $w$.  Then there is a surjective *-homomorphism from $C^*(H\Gamma_D)$ onto a full corner of $C^*(H\Gamma)$.
\end{proposition}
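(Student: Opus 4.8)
The plan is to imitate the strategy used for Moves S, R and O: I would exhibit a Cuntz--Krieger $H\Gamma_D$-family $\{Q_v\mid v\in V_D\}\cup\{T_\alpha\mid \alpha\in E_D\}$ inside $C^*(H\Gamma)$, invoke the universal property to obtain a $*$-homomorphism $\pi\colon C^*(H\Gamma_D)\to C^*(H\Gamma)$, and then identify $\mathrm{Im}(\pi)$ with a full corner. Since $\pi$ need not be injective, I only aim at surjectivity onto a corner, not at an isomorphism. As the authors already warn, the verification is a matter of case distinctions as in Move O, so I would only record the critical relations.

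The structural input that makes the construction possible comes from the incoming edges of $w$. By hypothesis every edge $e$ with $w\in r(e)$ satisfies $r(e)=\{w\}$, so (HR1) gives $s_e^*s_e=p_w$; in particular each such $s_e$ is a partial isometry with full initial projection $p_w$, so $s_es_e^*$ is Murray--von Neumann equivalent to $p_w$. Moreover (HR1) yields $s_e^*s_f=\delta_{ef}p_w$, whence $s_es_e^*\,s_fs_f^*=0$ for $e\neq f$: the family $\{s_es_e^*\mid e\in r^{-1}(\{w\})\}$ consists of mutually orthogonal copies of $p_w$, and these copies are the resource that realizes the new tower vertices. Indeed, in $C^*(H\Gamma_D)$ each vertex $w^{j+1}$ emits only the delay edge $f^j$, so combining (HR2a) and (HR2b) forces $q_{w^{j+1}}=t_{f^j}t_{f^j}^*$ together with $t_{f^j}^*t_{f^j}=q_{w^j}$; hence $q_{w^1},\dots,q_{w^n}$ are all equivalent and the images $T_{f^j}$ of the delay edges are forced to be the partial isometries telescoping one tower level into the next. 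Every incoming edge in a class $\mathcal E_j$ must land with initial projection $Q_{w^j}$ (a single copy of $p_w$) and with mutually orthogonal final projections, which is exactly what the orthogonal copies above supply.

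Granting the family, the universal property gives $\pi$, and with $p:=\pi(1)=\sum_{v\in V_D}Q_v$ one has $\mathrm{Im}(\pi)\subseteq pC^*(H\Gamma)p$ automatically. For the reverse inclusion I would run the word-reduction argument of the Move S and Move R proofs: using Proposition \ref{source_range_projections} to absorb the vertex projections sitting at the ends of a general word $p\,s_{\mu_1}^{\epsilon_1}\cdots s_{\mu_k}^{\epsilon_k}\,p$, one shows that such a word already lies in the linear span of the family $\{Q_v,T_\alpha\}$, so $pC^*(H\Gamma)p\subseteq\mathrm{Im}(\pi)$. Fullness is then shown as for Moves S and R: any closed two-sided ideal containing the corner contains all $Q_v$ and $T_\alpha$, and from these one recovers every generator $p_v,s_e$ of $C^*(H\Gamma)$ by combining (HR1), (HR2a), (HR2b) and Proposition \ref{unit}, so the ideal is all of $C^*(H\Gamma)$.

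The genuinely hard part is the construction itself, and in particular the placement of the tower projections $Q_{w^1},\dots,Q_{w^n}$. Unlike Move O, one cannot keep $Q_v=p_v$ for $v\neq w$ and merely split $p_w$: the natural copies of $p_w$ produced above live on the \emph{source} side of the incoming edges, not inside $p_w$, and in general there is no room for $n$ orthogonal copies of $p_w$ below $p_w$ itself. Thus a global re-routing of the non-$w$ generators is unavoidable, and the delicate, computation-heavy step is to define $\{Q_v,T_\alpha\}$ so that \emph{all three} hypergraph relations of $H\Gamma_D$ hold simultaneously after this re-routing; here the range-side locally-ultra hypothesis on $w$ is used repeatedly to guarantee the clean splitting and orthogonality. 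I expect this bookkeeping --- rather than the corner identification or the fullness argument --- to be the main obstacle, exactly as in the graph model of \cite{flowequivalence} that is being adapted.
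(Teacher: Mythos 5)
Your plan goes in the opposite direction from the paper's proof, and unfortunately it is the direction that cannot work. The paper constructs a Cuntz--Krieger $H\Gamma$-family \emph{inside} $C^*(H\Gamma_D)$ --- setting $P_w:=q_{w^1}$, $P_v:=q_v$ for $v\neq w$, $S_e:=t_e$ if $w\notin r(e)$ and $S_e:=t_et_{f_{j-1}}\cdots t_{f_1}$ if $e\in\mathcal{E}_j$ --- and thereby obtains $\pi\colon C^*(H\Gamma)\to C^*(H\Gamma_D)$ whose image is the full corner $pC^*(H\Gamma_D)p$ with $p=q_F$, $F=(V\setminus\{w\})\cup\{w^1\}$. (The proposition's wording has the arrow backwards relative to its own proof: what is actually proven is that $C^*(H\Gamma)$ surjects onto a full corner of $C^*(H\Gamma_D)$, in line with the graph-case result being generalized, where the original algebra sits as a full corner in the delayed one.) You instead try to realize a Cuntz--Krieger $H\Gamma_D$-family inside $C^*(H\Gamma)$ so as to get $\pi\colon C^*(H\Gamma_D)\to C^*(H\Gamma)$, and you defer the construction of that family as ``delicate bookkeeping.''

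That deferred step is not merely hard; it is impossible in general, so no re-routing can complete your argument. Take $H\Gamma$ to be the graph with vertices $u,w$ and two parallel edges $e_1,e_2$ from $u$ to $w$, and perform the indelay at $w$ with the partition $\{e_1\}\cup\{e_2\}$ (this satisfies all hypotheses: $w$ is not a source and $r(e_i)=\{w\}$). Then $C^*(H\Gamma)\cong M_3(\mathbb{C})$ while $C^*(H\Gamma_D)\cong M_4(\mathbb{C})$. As you yourself observe, the relations of $H\Gamma_D$ force any family to satisfy $T_{f^1}^*T_{f^1}=Q_{w^1}$, $T_{f^1}T_{f^1}^*=Q_{w^2}$, $T_{e_i}^*T_{e_i}=Q_{w^i}$ and $Q_u=T_{e_1}T_{e_1}^*+T_{e_2}T_{e_2}^*$; counting ranks of the mutually orthogonal projections $Q_u,Q_{w^1},Q_{w^2}$ in $M_3(\mathbb{C})$ gives $3\geq \operatorname{rank}Q_u+\operatorname{rank}Q_{w^1}+\operatorname{rank}Q_{w^2}=4\operatorname{rank}Q_{w^1}$, so every Cuntz--Krieger $H\Gamma_D$-family in $C^*(H\Gamma)$ is zero (equivalently: $M_4(\mathbb{C})$ is simple, so every $*$-homomorphism $M_4(\mathbb{C})\to M_3(\mathbb{C})$ vanishes). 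Hence there is no surjection of $C^*(H\Gamma_D)$ onto a nonzero --- let alone full --- corner of $C^*(H\Gamma)$. The obstruction you sensed (``no room for $n$ orthogonal copies of $p_w$'') is real and fatal; the correct response to it is not more bookkeeping but reversing the arrow, building the $H\Gamma$-family in $C^*(H\Gamma_D)$ as the paper does, where the tower projections $q_{w^2},\dots,q_{w^n}$ are permitted to live \emph{outside} the corner rather than being manufactured inside $C^*(H\Gamma)$.
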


\begin{proof}
    Let $\{q_v, t_e \}$ be the universal Cuntz-Krieger $H\Gamma_{D}$-family. Then $\{P_v \;|\ v \in V\}, \; \{S_e \; | \; e \in E\}$ defined as
    \begin{align*}
        P_v&:=\begin{cases} q_v & \quad \text{if} \; v \neq w \\ q_{w^1} & \quad\text{if} \; v = w\end{cases}\\
        S_e&:=\begin{cases} t_e & \quad \text{if} \; w \notin r(e) \\ t_{e}t_{f_{j-1}}\dots t_{f_1} & \quad \text{if} \; w \in \mathcal{E}^j\end{cases}
    \end{align*}
    forms a Cuntz-Krieger $H\Gamma$-family in $H\Gamma_{D}$.
    The hypergraph relations follow by mainly straightforward calculations. We mention shortly the critical tricks. For the first hypergraph relations note that $f_j\dots f_1$ are perfect paths for all $j \in \{1,\dots n-1\}$. Thus, for all $e \in \mathcal{E}_j$ it follows that
        \begin{align*}
            S_e^*S_e= q_{r_D(f_1)}=q_{w_1}=P_w. 
        \end{align*}
    This result also explains, why we have to add the assumption that $w \in r(e)$ implies $w=r(e)$. The assumption implies furthermore, that $ef_j\dots f_1$ is a perfect path and since $q_{w_j}=t_{f_{j-1}}t_{f_{j-1}}^*$ by the construction of the edges $f_j \in E_D$ this leads to $S_eS_e^*=t_et_e^*$ for all $e \in E$, which is crucial for the second hypergraph relation. For the third hypergraph relation we use that $v \in s_D(e)$ implies $v \in s(e)$ and $w_1 \in s_D(e)$ implies $w \in s(e)$.
    Combining this with the hypergraph relations of $H\Gamma_D$ shows that the given elements form a Cuntz-Krieger $H\Gamma$-family.

    By the universal property we get the canonical *-homomorphism  $\pi:C^*(H\Gamma)\rightarrow C^*(H\Gamma_D)$, $p_v \mapsto P_v$, $s_e\mapsto S_e$.
    Let $F:=V\setminus \{w\} \cup \{w^1\}\subseteq V_D$ and $p:=q_F$.
    By definition of $P_v$ we get $P_v=pP_vp$. For all $e\in E$ we have $s_D(e)\subseteq F$. For $e \in E$ with $w \notin r(e)$ we have $r_D(e)\subseteq V\setminus \{w\}\subseteq F$ and for $e \in \mathcal{E}_j$ we have $r_D(ef_{j-1}\dots f_1)=w_1\subseteq F$. Hence by applying Proposition \ref{rangeprojectionquasiperfectpaths} we get $S_e=pS_ep$. Thus, the image of the generators of $C^*(H\Gamma)$ is contained in $pC^*(H\Gamma_D)p$ and $Im(\pi)\subseteq pC^*(H\Gamma_D)p$. \\
    
    To see the converse we consider a general element $S:=t_{\mu_1}^{\epsilon_1}\dots t_{\mu_m}^{\epsilon_m}$ for paths $\mu_1, \dots, \mu_m$ in $H\Gamma_D$ and $\epsilon_1, \dots, \epsilon_m \in \{1,*\}$. We show that $pSp\in Im(\pi)$. We first have a deeper look at a path $\mu=e_1\dots e_k$ in $H\Gamma_D$. 
        \begin{itemize}
            \item[(i)] If $e_j \in E$ for $j=1, \dots, k$, we have $s_D(\mu)\subseteq F$. Hence $pt_\mu=t_\mu=S_\mu \in Im(\pi)$.
            \item[(ii)] If $e_j \in E$ for $j=1, \dots, k$ and $w_i \notin r_D(\mu)$, we have $r_D(\mu)\subseteq V\setminus\{w\}$. Hence $t_\mu p=t_\mu=S_\mu \in Im(\pi)$. On the other hand, if $w_i\in r_D(\mu)$ we have $w_i =r_D(\mu)$ by assumption and hence $t_\mu p=0\in Im(\pi)$.
            \item[(iii)] If $e_j=f_l$ for some $j\in \{ 1, \dots, k\}$ and $l\in\{1,\dots,n-1\}$, either the whole path $f_l\dots f_{n-1}$ is contained in $\mu$ or $e_k=f_{l+k-j}$. If $e_1\neq f^l$ and $e_k\neq f_{l+k-j}$, the path $\nu:=e_{j-1}f_l\dots f_{n-1}$ is contained in $\mu$ and hence $t_\mu=s_{e_1}\dots S_\nu \dots s_{e_k}\in Im(\pi)$. Hence it remains to consider the cases, when the path starts or ends with an element in $\{f_1,\dots, f_{n-1}\}$.
                \begin{itemize}
                    \item[(iii.a)] If $e_1=f^l$ we have $s_D(\mu)=w_l\notin F$ and hence $pt_\mu=0\in Im(\pi)$. On the other hand, if we have a second path $\alpha$ such that $t_\alpha^*t_\mu\neq 0$ or $t\mu^*t_\nu\neq 0$, we must have $\alpha_j=f_{l+j-1}$ since we have perfect paths. Hence using properties of perfect paths the elements vanish and we are left with paths $\mu', \alpha'$ which dies not contain the elements $\{f_1, \dots, f_n\}$. Hence $t_{\mu'}=s_{\mu'}\in Im(\pi)$.
                    \item[(iii.b)] If $e_k=f_{l+k-j}$ we have $t_\mu p=0\in Im(\pi)$. A similar argument as in the last step shows, that the interaction with another path $\alpha$ cancels the elements $t_{f_j}$ and we are left with $t_{\mu'}=s_{\mu'}\in Im(\pi)$.
                \end{itemize}
        \end{itemize}
    Combining these arguments it follows that $pSp\in Im(\pi)$ and hence $pSp = Im(\pi)$.\\
    
    It remains to show that the corner is full. Let $I\subseteq C^*(H\Gamma_D)$ be a closed two-sided ideal containing $pC^*(H\Gamma_D)p$. Then I contains the projections $q_v=pq_vp$ for all $v \in V\setminus\{w\}\cup \{w_1\}$. Since $s_D(e)\subseteq V\setminus \{w\} \cup \{w^1\}$ for all $e \in E$, we get $t_e=pt_e\in I$ for all $e \in E$. Since $\mathcal{E}_j\neq \emptyset$, for each vertex $w_j$ there exists an edge $e \in E$ such that $w_j=r_D(e)$. Hence since $p_{w_j}=t_e^*t_e\in I$. Thus all canonical projections are contained in the ideal, and thus by Proposition \ref{unit} the unit is contained in the ideal. This shows that the ideal must be all of $C^*(H\Gamma_D)$ and the corner is full. 
\end{proof}

Now we can connect the indelay with move I and receive isomorphic $C^*$-algebras.

\begin{proposition}\label{isomorphism_indelay_MoveI}
    Let $H\Gamma=(V, E, r,s)$ be a finite hypergraph and $w$ be a vertex that is not a source and let $H\Gamma$ be ultra locally at $w$. The incoming edges of $w$ be partitioned into disjoint sets $\mathcal{E}_1 \cup \dots \cup \mathcal{E}_n$. Let $H\Gamma_{D}$ and $H\Gamma_I$ be the corresponding hypergraphs formed by an indelay and an insplitting respectively. Then $C^*(H\Gamma_{D})\cong C^*(H\Gamma_I)$. 
\end{proposition}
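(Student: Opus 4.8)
The plan is to construct an explicit pair of mutually inverse $*$-homomorphisms $\phi:C^*(H\Gamma_I)\to C^*(H\Gamma_D)$ and $\psi:C^*(H\Gamma_D)\to C^*(H\Gamma_I)$ through the universal property, by exhibiting a Cuntz-Krieger $H\Gamma_I$-family inside $C^*(H\Gamma_D)$ and a Cuntz-Krieger $H\Gamma_D$-family inside $C^*(H\Gamma_I)$. The first observation is that the two hypergraphs share the same vertex set $V_D=V_I=V\setminus\{w\}\cup\{w^1,\dots,w^n\}$, so in both directions the vertex projections may simply be matched, $P_v:=q_v$ and $Q_v:=p_v$. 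All the content therefore lies in the edges, and the governing idea is that the extra edges $f^1,\dots,f^{n-1}$ of the indelay form a chain $w^n\xrightarrow{f^{n-1}}\cdots\xrightarrow{f^1}w^1$ down which every incoming edge must flow before leaving $w^1$. I would record first the two structural facts in $C^*(H\Gamma_D)$: that $t_{f^j}t_{f^j}^*=q_{w^{j+1}}$ (since $w^{j+1}$ emits only $f^j$) and that $q_{w^1}=\sum_{w\in s(e)}t_et_e^*$ (since $w^1$ emits exactly the former outgoing edges of $w$, using that these equalities are forced by combining (HR2a) and (HR2b) via Remark \ref{orderrelation}). Together these give $q_{w^k}=t_{f^{k-1}}\cdots t_{f^1}q_{w^1}t_{f^1}^*\cdots t_{f^{k-1}}^*$, the precise sense in which the copies $w^1,\dots,w^n$ are identified inside the indelay algebra.

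For $\phi$ I would send the $i$-th insplit copy of an edge to the corresponding path down the chain: $S_{e^1}:=t_e$ and, for copies coming from an edge with $w\in s(e)$ (hence $s(e)=\{w\}$ by the locally-ultra hypothesis), $S_{e^i}:=t_{f^{i-1}}\cdots t_{f^1}t_e$ for $i\ge 2$. Using the structural facts and Proposition \ref{source_range_projections}, the chain telescopes so that (HR1) reduces to $S_{e^i}^*S_{e^i}=q_{r_D(e)}=\sum_{v\in r_I(e^i)}P_v$, (HR2a) to $S_{e^i}S_{e^i}^*\le q_{w^i}=P_{s_I(e^i)}$, and the orthogonality $S_{e^i}^*S_{f^k}=0$ for $(e,i)\neq(f,k)$ to (HR1) of $C^*(H\Gamma_D)$; the relation (HR2b) at $w^k$ becomes exactly the identity $q_{w^k}=\sum_{w\in s(e)}S_{e^k}S_{e^k}^*$ obtained by conjugating $q_{w^1}=\sum_e t_et_e^*$ down the chain, while at the remaining vertices it is inherited verbatim from $C^*(H\Gamma_D)$.

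For $\psi$, the only non-obvious step is to manufacture the connecting partial isometries $t_{f^j}$ inside $C^*(H\Gamma_I)$, where the copies $w^j$ are genuinely distinct vertices with no edge between them. I would set $T_e:=s_{e^1}$ and $T_{f^j}:=\sum_{w\in s(e)}s_{e^{j+1}}s_{e^j}^*$, the sum running over the former outgoing edges of $w$. The point is that $\sum_{w\in s(e)}s_{e^j}s_{e^j}^*=q_{w^j}$ holds in $C^*(H\Gamma_I)$ (again forcing equality from (HR2a) and (HR2b)), so $T_{f^j}$ is a partial isometry with $T_{f^j}^*T_{f^j}=q_{w^j}=Q_{r_D(f^j)}$ and $T_{f^j}T_{f^j}^*=q_{w^{j+1}}=Q_{s_D(f^j)}$; verifying the remaining $H\Gamma_D$-relations is then routine, the orthogonality relations following directly from (HR1) of $C^*(H\Gamma_I)$.

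Finally I would check $\psi\circ\phi=\id$ and $\phi\circ\psi=\id$ on generators: the computation $T_{f^{i-1}}\cdots T_{f^1}s_{e^1}=s_{e^i}$ (each $T_{f^j}$ raises the index via $s_{e^{j+1}}s_{e^j}^*s_{e^j}=s_{e^{j+1}}$) yields $\psi(S_{e^i})=s_{e^i}$, while $\phi(T_{f^j})=\sum_e S_{e^{j+1}}S_{e^j}^*=t_{f^j}\sum_e S_{e^j}S_{e^j}^*=t_{f^j}q_{w^j}=t_{f^j}$ recovers the chain edges. I expect the main obstacle to be bookkeeping rather than conceptual: one must treat loops at $w$ (edges with $w\in s(e)\cap r(e)$) carefully so that the range redirections $r_D$ and $r_I$ genuinely agree, and one must confirm that (HR2a)--(HR2b) really do force the equalities $\sum_e s_{e^j}s_{e^j}^*=q_{w^j}$ that make the connector $T_{f^j}$ a partial isometry. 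This last point is exactly where the locally-ultra assumption at $w$ enters, since it guarantees that the outgoing edges of $w$ are simply duplicated across the copies $w^1,\dots,w^n$ without any interference from other vertices in their sources.
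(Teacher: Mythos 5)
Your proposal is correct and follows essentially the same route as the paper's proof: the same Cuntz--Krieger families in both directions (matching vertex projections, $S_{e^i}:=t_{f^{i-1}}\cdots t_{f^1}t_e$ and $T_{f^j}:=\sum_{e,\,w\in s(e)}s_{e^{j+1}}s_{e^j}^*$), the same use of the forced equalities $q_{w^1}=\sum_{w\in s(e)}t_et_e^*$ and $p_{w^j}=\sum_{w\in s(e)}s_{e^j}s_{e^j}^*$ coming from (HR2a)/(HR2b) under the locally ultra hypothesis, and the same telescoping check that the two $*$-homomorphisms are mutually inverse on generators. The only blemish is notational: in the paragraph defining $\psi$ you write $q_{w^j}$ for projections that live in $C^*(H\Gamma_I)$, where they should be $p_{w^j}$.
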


\begin{proof}
Let $\{p_v, s_e\}$ and $\{q_v, t_e\}$ be the canonical generators of $C^*(H\Gamma_I)$ and $C^*(H\Gamma_D)$ respectively. We define a Cuntz-Krieger $H\Gamma_I$-family in $C^*(H\Gamma_D)$ by 
        \begin{align*}
            &P_v:=q_v,\\
            &S_{e^i}:=\begin{cases}t_e & \text{if}\;i=1\\ 
            t_{f_{i-1}}\dots t_{f_1}t_e& \text{if}\; i=2,\dots, n.\end{cases}
        \end{align*}

Indeed, for the first hypergraph relation, we note that by definition, $r_D(e)=r_I(e^i)$ for all $e \in E$. Thus we get for $i=1$ that $S_{e^1}^*S_{e^1}=P_{r_I(e^1)}$.
For $i=2,\dots,n$ we get using the fact that the $f_j$ build perfect paths
    \begin{align*}
        S_{e^i}^*S_{e^i}= t_e^*t_{f_1}^*\dots t_{f_{i-1}}^*t_{f_{i-1}}\dots t_{f_1}t_e=t_e^*t_e=q_{r_D(e)}=P_{r_I(e^i)}.
    \end{align*}
It follows directly that for $e^i\neq g^j$ we have $S_{e^i}^*S_{g^j}=0$.

The second hypergraph relation for $i=1$ we follows using that $s_D(e)=s_I(e^1)$ for all $e\in E$
    \begin{align*}
         S_{e^1}S_{e^1}^*=t_et_e^*\leq q_{s_D(e)}=P_{s_I(e^1)}.
    \end{align*}
For $i=2,\dots,n$ we get, using again that we deal with perfect paths
    \begin{align*}
         S_{e^i}S_{e^i}^*&=t_{f_{i-1}}\dots t_{f_1}t_et_e^*t_{f_1}^*\dots t_{f_{i-1}}^*
        \leq t_{f_{i-1}}^*t_{f_{i-1}}
        = q_{w^i}
        =P_{w^i}
        =P_{s_I(e^i)}.
    \end{align*}   
    
To check the third hypergraph relation, consider first $v \neq w^i$ for $i>1$. Then we have that $v \in s_D(e)$ if and only if $v \in s_I(e^1)$. Recalling that $S_{e^1}=t_e$ we get 
    \begin{align*}
        P_v=q_v 
        \leq \sum_{e \in E_D, v \in s_D(e)}t_et_e^*
        \leq \sum_{e \in E_I, v \in s_I(e)}S_eS_e^*.
    \end{align*}
In case of $v = w^i$ for $i>1$ we get the result using the perfect paths 
    \begin{align*}
        P_{w^i}&=q_{w^i}\\
        &=t_{f_{i-1}}t_{f_{i-1}}^*\\
        &=t_{f_{i-1}}\dots t_{f_1}t_{f_1}^*\dots t_{f_{i-1}}^*\\
        &=t_{f_{i-1}}\dots t_{f_1}q_{w_1}t_{f_1}^*\dots t_{f_{i-1}}^*\\
        &\leq t_{f_{i-1}}\dots t_{f_1}\left(\sum_{e \in E_D, v \in s_D(e)}t_et_e^*\right)t_{f_1}^*\dots t_{f_{i-1}}^*\\
        &=\sum_{e \in E_D, v \in s_D(e)}t_{f_{i-1}}\dots t_{f_1}t_et_e^*t_{f_1}^*\dots t_{f_{i-1}}^*\\
        &=\sum_{e^i \in E_D, w^i \in s_I(e^i)}S_{e^i}S_{e^i}^*.\\        
    \end{align*}
       
On the other hand we can define a Cuntz-Krieger $H\Gamma_D$-family in $C^*(H\Gamma_I)$ by
        \begin{align*}
            Q_v&:=p_v \\
            T_e&:=s_{e^1} \\
            T_{f_j}&:=\sum_{e \in E, w \in s(e)}s_{e^{j+1}}s_{e^j}^*.
        \end{align*}

 For $e \in E$, the first hypergraph relation follows using  $r_I(e^1)=r_D(e)$. For the remaining edges $\{f_1,\dots, f_{n-1}\}$ it holds using that $r_I(e^{j+1})=r_I(e^j)$ and using that $s_I(e^i)=w^i$ for $i=2,\dots,n$
    \begin{align*}
        T_{f_j}^*T_{f_j}&=\left(\sum_{w \in s(e)}s_{e^j}s_{e^{j+1}}^*\right)\left(\sum_{w \in s(e)}s_{e^{j+1}}s_{e^j}^*\right)\\
        &=\sum_{w \in s(e)}s_{e^j}s_{e^{j+1}}^*s_{e^{j+1}}s_{e^j}^*\\
        &=\sum_{w \in s(e)}s_{e^j}s_{e^j}^*\\
        &=p_{w^j}\\
        &=Q_{w^j}\\
        &=Q_{r_D(f_j)}.
    \end{align*}

For $e \in E$ the second hypergraph relation follows directly since $s_I(e^1)=s_D(e)$.
Similar as for the first hypergraph relation, we get for the remaining edges $\{f_1,\dots, f_{n-1}\}$ using that $s_I(e^i)=w^i$ for $i=2,\dots,n$
    \begin{align*}
        T_{f_j}T_{f_j}^*&=\left(\sum_{w \in s(e)}s_{e^{j+1}}s_{e^j}^*\right)\left(\sum_{w \in s(e)}s_{e^j}s_{e^{j+1}}^*\right)\\
        &=\sum_{w \in s(e)}s_{e^{j+1}}s_{e^j}^*s_{e^j}s_{e^{j+1}}^*\\
        &=\sum_{w \in s(e)}s_{e^{j+1}}s_{e^{j+1}}^*\\
        &=p_{w^{j+1}}\\
        &=Q_{w^{j+1}}\\
        &=Q_{s_D(f_j)}.
    \end{align*}
    
The third hypergraph relation for $v \neq w^j$ follows again since $s_I(e^1)=s_D(e)$
    \begin{align*}
        Q_v=p_v\leq \sum_{e^i \in E_I, v \in s_I(e^i)}s_{e^i}s_{e^i}^*=\sum_{e \in E_D, v \in s_D(e)}T_eT_e^*.
    \end{align*}
The case $v = w^j$ for $j=2,\dots, n$ follows directly from the calculation in (HR2a).

Applying the universal property twice we get the *-homomorphisms
    \begin{align*}
        \pi&:C^*(H\Gamma_I)\rightarrow C^*(H\Gamma_D), p_v \mapsto P_v,  s_{e^i} \mapsto S_{e^i} \\
        \tilde{\pi}&:C^*(H\Gamma_D)\rightarrow C^*(H\Gamma_I), q_v \mapsto Q_v, t_e \mapsto T_e,
\end{align*}
which are inverse to each other. To see this, we show that both are inverse to each other on the generators. This is clear for all projections and edges except of $e^i \in C^*(H\Gamma_I)$ with $i=2,\dots,n$ and $f_j\in C^*(H\Gamma_D)$. For these we get 
    \begin{align*}
        \tilde{\pi}\circ \pi (s_{e^i})&=\tilde{\pi}(t_{f_{i-1}}\dots t_{f_1}t_e)\\
        &=\left(\sum_{g \in E}s_{g^{i}}s_{g^{i-1}}^*\right)\dots \left(\sum_{g \in E}s_{g^2}s_{g^1}^*\right)s_{e^1}\\
        &=s_{e^{i}}s_{e^{i-1}}^*\dots s_{e^2}s_{e^1}^*s_{e^1}\\
        &=s_{e^{i}}.
    \end{align*}
Since $w \in s(e)$ implies $w=s(e)$ we get that $q_{w^1}=\sum_{e \in E, w\in s(e)}t_et_e^*$. Using this it follows 
    \begin{align*}
        \pi \circ \tilde{\pi}(t_{f_i})&=\pi(\sum_{e \in E}s_{e^{i+1}}s_{e^i}^*)\\
        &=\sum_{e \in E, w\in s(e)}t_{f_{i}}\dots t_{f_1}t_et_e^*t_{f_1}^*\dots t_{f_{i-1}}^* \\
        &=t_{f_{i}}\dots t_{f_1}\left(\sum_{e \in E, w\in s(e)}t_et_e^*\right)t_{f_1}^*\dots t_{f_{i-1}}^* \\
        &=t_{f_{i}}\dots t_{f_1}q_{w^1}t_{f_1}^*\dots t_{f_{i-1}}^* \\
        &=t_{f^i},
    \end{align*}
where we used the properties of perfect paths. 
\end{proof}

Combining Proposition \ref{surjective_hom_indelay} and Proposition \ref{isomorphism_indelay_MoveI} we get the desired result under the locally ultra assumption.

\begin{korollar}[Move I]
      Let $H\Gamma=(V, E, r, s)$ be a finite hypergraph and $w$ be a vertex that is not a source and let $H\Gamma$ be locally ultra at $w$.  Let $H\Gamma_I$ be the hypergraph obtained by insplitting $H\Gamma$ at $w$. Then there is a surjective *-homomorphism from $C^*(H\Gamma_I)$ onto a full corner of $C^*(H\Gamma)$.  
\end{korollar}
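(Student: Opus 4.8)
The plan is to obtain the desired surjection by chaining the two preceding results. First I would apply Proposition~\ref{isomorphism_indelay_MoveI} with the partition $\mathcal{E}_1 \cup \dots \cup \mathcal{E}_n$ of the incoming edges of $w$; this yields an isomorphism $C^*(H\Gamma_I) \cong C^*(H\Gamma_D)$ between the insplitting and the indelay, and it requires only that $w$ is not a source and that $H\Gamma$ is locally ultra at $w$, which are exactly our hypotheses. Next I would apply Proposition~\ref{surjective_hom_indelay} to obtain a surjective *-homomorphism from $C^*(H\Gamma_D)$ onto a full corner $p\,C^*(H\Gamma)\,p$. Precomposing this surjection with the isomorphism then gives a surjective *-homomorphism from $C^*(H\Gamma_I)$ onto the full corner, which is precisely the assertion.

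The subtlety that must be addressed is that Proposition~\ref{surjective_hom_indelay} is stated under the range-side hypothesis ``$w \in r(e)$ implies $r(e) = \{w\}$'', while we assume only the source-side locally ultra condition at $w$. I would remove this discrepancy by arranging the range condition without loss of generality: applying the decomposition of ranges (Proposition~\ref{rangesingularization}) replaces $H\Gamma$ by an isomorphic hypergraph $\widehat{H\Gamma}$ in which every edge has a singleton range. Since range decomposition leaves all source sets unchanged (one has $\tilde s((e,v)) = s(e)$), the vertex $w$ stays a non-source, $\widehat{H\Gamma}$ remains locally ultra at $w$, and the condition $w \in r(\hat e) \Rightarrow r(\hat e) = \{w\}$ now holds automatically. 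The partition $\mathcal{E}_1 \cup \dots \cup \mathcal{E}_n$ induces a partition of the incoming edges of $w$ in $\widehat{H\Gamma}$ by placing each split edge $(e,w)$ with $e \in \mathcal{E}_j$ into the $j$-th block, so that both Proposition~\ref{surjective_hom_indelay} and Proposition~\ref{isomorphism_indelay_MoveI} now apply to $\widehat{H\Gamma}$.

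The step I expect to be the main obstacle is transporting the conclusion back from $\widehat{H\Gamma}$ to $H\Gamma$, namely verifying $C^*(\widehat{H\Gamma}_I) \cong C^*(H\Gamma_I)$. For this I would check, at the combinatorial level, that insplitting and decomposition of ranges commute up to relabeling of edges: the insplitting $\widehat{H\Gamma}_I$ of the range-decomposed hypergraph is again a range decomposition of the insplitting $H\Gamma_I$, because both constructions create the same split vertices $w^1, \dots, w^n$ and differ only in how range vertices are distributed (the one case deserving care being edges incident to $w$ on both the source and the range side). Granting this commutation, a further application of Proposition~\ref{rangesingularization} gives $C^*(\widehat{H\Gamma}_I) \cong C^*(H\Gamma_I)$, and the composite
\[
    C^*(H\Gamma_I) \;\cong\; C^*(\widehat{H\Gamma}_I) \;\cong\; C^*(\widehat{H\Gamma}_D) \;\twoheadrightarrow\; p\,C^*(\widehat{H\Gamma})\,p
\]
finishes the argument, since the isomorphism $C^*(\widehat{H\Gamma}) \cong C^*(H\Gamma)$ of Proposition~\ref{rangesingularization} carries this corner to a full corner of $C^*(H\Gamma)$.
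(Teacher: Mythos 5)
Your proposal is correct, and its core is the same as the paper's: the paper proves this corollary in a single line, by combining Proposition~\ref{surjective_hom_indelay} (the indelay algebra surjects onto a full corner of $C^*(H\Gamma)$) with Proposition~\ref{isomorphism_indelay_MoveI} (indelay and insplitting give isomorphic $C^*$-algebras). Where you go beyond the paper is in your treatment of the hypothesis mismatch you correctly identified: Proposition~\ref{surjective_hom_indelay} requires the range-side condition ($w \in r(e)$ implies $r(e)=\{w\}$), which is logically independent of the source-side locally ultra condition assumed in the corollary, so the bare combination of the two propositions does not literally apply. The paper elides this point (it invokes the reduction via decomposition of ranges explicitly only in its discussion of Move R), whereas you make the reduction precise: pass to the range-decomposed hypergraph $\widehat{H\Gamma}$ via Proposition~\ref{rangesingularization}, observe that this preserves both the non-source and locally ultra properties of $w$ while trivially enforcing the range condition, and then verify that insplitting commutes with range decomposition so that $C^*(\widehat{H\Gamma}_I)\cong C^*(H\Gamma_I)$. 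That commutation does hold: under the induced partition, the relabeling $(e,v)^i \leftrightarrow (e^i,v)$ for $v \neq w$, together with $(e,w)^i \leftrightarrow (e^i,w^j)$ for $e \in \mathcal{E}_j$, is a bijection of edges matching sources and ranges on both sides, including the delicate case of edges meeting $w$ in both source and range; and since a *-isomorphism carries full corners to full corners, your composite $C^*(H\Gamma_I)\cong C^*(\widehat{H\Gamma}_I)\cong C^*(\widehat{H\Gamma}_D)\twoheadrightarrow p\,C^*(\widehat{H\Gamma})\,p$ yields exactly the claimed surjection. In short: same route as the paper, but your version closes a step that the paper's one-sentence proof leaves open.
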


\section{\textbf{Concluding remarks}} \label{Further_Research}

This is the first article on hypergraph $C^*$-algebras and we have to leave many questions open. See also \cite{Schaefer, faross} for very recent follow up articles on hypergraph $C^*$-algebras. Here is a selection of topics for future research.\\

\textbf{(1)} Recall that $C^*(H\Gamma)$ is isomorphic to its dual graph $C^*$-algebra $C^*(\tilde\Gamma)$ under certain conditions, see Cor. \ref{dualgraphisom}.  Given two hypergraphs with similar dual graphs -- do they share any properties? Are all nuclear hypergraph $C^*$-algebras isomorphic to the $C^*$-algebra of its dual graph? \\

\textbf{(2)} Based on our results for finite hypergraphs a next step could be to investigate infinite hypergraphs. We already shed some light on critical steps in the definition, see Definition \ref{Hypergraph_Def_generalized}. Another indication can be the results on infinite ultragraphs in \cite{Tomforde}.\\

\textbf{(3)} Building on the specific characteristics of hypergraph $C^*$-algebras, one can investigate further implications of the path structure, see Section 3.5.\\

\textbf{(4)}  The topic of non-nuclearity offers a broad field of research questions. One can investigate concrete conditions for nuclearity and try to describe them via properties of the hypergraph, see also \cite{Schaefer}. 
    Within this context one could examine the ideal structure of hypergraph $C^*$-algebras and their relation to saturated and hereditary subgraphs. This is especially interesting as hereditary subalgebras of nuclear $C^*$-algebras are nuclear. Thus, this could be used to further enlarge the number of examples of non-nuclear hypergraph $C^*$-algebras.\\

\textbf{(5)}  Our counterexample in Example \ref{failed_gauge_uniqueness} has shown that a direct generalization of the Gauge-Invariant Uniqueness Theorem is not possible. We only have it for a subclass of hypergraph $C^*$-algebras, see Thm. \ref{Gauge_Uniqueness_hypergraphs}. This raises several new research questions: Can the theorem be generalized with another action? Do the restrictions under which the Gauge-Invariant Uniqueness Theorem holds already describe the ultragraph $C^*$-algebras (see Remark \ref{ultragraphs_restrictions_gauge_uniqueness})? Are there other ways to prove injectivity of representations?

In the realm of the Gauge-Invariant Uniqueness Theorem we touched the dual graph of a hypergraph. We saw that it looses information, especially, its $C^*$-algebra is not isomorphic, not even Morita equivalent to the initial hypergraph $C^*$-algebra. Multiple questions are interesting in this regard: Which information is lost? What have  hypergraphs with the same dual graphs  in common? Are there other constructions/generalizations of the dual graph which give more insights?\\

\textbf{(6)} The manipulation of hypergraphs by moves and the corresponding changes in the associated graphs remain an exciting area of research. Based on our results for the moves S, R, I, O, further investigations can be made. In particular, by constructing counterexamples. Of particular interest is also the observation that the hypergraphs must locally look like ultragraphs in order to apply the moves.  \\

\textbf{(7)} For graph $C^*$-algebras there is already an explicit way to compute the K-groups \cite[Thm. 3.2]{Raeburn2003CuntzKriegerAO}. This result can be applied to ultragraph $C^*$-algebras by Morita equivalence. How about the K-theory of hypergraph $C^*$-algebras?\\

\textbf{(8)} In the Bachelor's thesis of the third author, we have two examples of  hypergraph $C^*$-algebras we don't understand, \cite[Sect. 3.2.4]{zenner}. The first one is given by $V=\{v_1,v_2,v_3,v_4\}$, $E=\{e\}$ and $r(e)=\{v_1,v_2\}$, $s(e)=\{v_3,v_4\}$. It is easy to see that $s_e+s_e^*$ is a unitary, but we have no further description of this $C^*$-algebra.

The second example is given by $V=\{v_1,v_2,v_3,v_4\}$, $E=\{e_1,e_2\}$ and $r(e_1)=\{v_3,v_4\}$, $s(e_1)=\{v_1,v_2\}$, $r(e_2)=\{v_1,v_2\}$, $s(e_2)=\{v_3,v_4\}$.  Here, $u:=s_{e_1}+s_{e_2}$ is a unitary satisfying with the projection $p:=s_{e_1}^*s_{e_1}$ the relation $up=(1-p)u$, such that we obtain a (non-surjective) *-homomorphism from $C^*(\mathbb Z/\mathbb Z_2)\rtimes_\alpha\mathbb Z/\mathbb Z_2$ to $C^*(H\Gamma)$, where $\alpha$ is the automorphism flipping the projections $p$ and $1-p$ in $C^*(\mathbb Z/\mathbb Z_2)\cong C^*(p,1)$. It would be interesting to understand $C^*(H\Gamma)$ better for this example.

There are further unclear examples in \cite[Sect. 3.2.5]{zenner}.

\textbf{(9)} The representation theory of hypergraph $C^*$-algebras is not understood. See 
    \cite[Sect. 3.3]{zenner} for some very first steps. See also the theses \cite{zenner, trieb, schaeferthesis} for more work on hypergraph $C^*$-algebras.

\bibliographystyle{alpha}
\bibliography{references}

\newpage



\section{\textbf{Appendix A --- List of graph and hypergraph $C^*$-algebras}}
\label{Appendix_A}

We list some examples of graph and hypergraph $C^*$-algebras as overviews. We visualize them and use colored edges of different thickness to mark single edges if it simplifies the picture.

\begin{longtable}[h]{p{0.15\linewidth}|p{0.35\linewidth}|p{0.4\linewidth}}
\textbf{$C^*$-Algebra} & \textbf{Definition} & \textbf{Hypergraph} \\
\hline 
&&\\
$\C$ & 
$V=\{v\}$, $E=\emptyset$ & 
    \begin{minipage}{7cm}
    \centering
    \begin{tikzpicture}
            \node (v0) at (0,0) {};
            \fill (v0) circle[radius=2pt];
    \end{tikzpicture}
    \end{minipage}\\
&&\\
\hline
&&\\

  $M_n(\C)$

&   $V=\{v_1,\dots, v_n\}$ \newline
    $E=\{e_1,\dots, e_{n-1}\}$ \newline
    $s(e_j)=v_{j+1}$, $r(e)=v_j$ 
    
&    \begin{minipage}{7cm}
    \centering
    \begin{tikzpicture}[ > = stealth, auto, thick]
        \node (v0) at (0,0) {$v_1$};
        \node (v1) at (2,0) {$v_2$};
        \node (v2) at (4,0) {$\dots$};
        \node (v3) at (6,0) {$v_n$};
        \path[->] (v1) edge (v0); 
        \node (e0) at (1,0.25) {$e_1$};
        \path[->] (v2) edge (v1); 
        \node (e1) at (3,0.25) {$e_2$};
        \path[->] (v3) edge (v2); 
        \node (e2) at (5,0.25) {$e_{n-1}$};
        \node (e3) at (1,0.75) {};
    \end{tikzpicture}
    \end{minipage}\\


&   $V=\{v,w\}$\newline
    $E=\{e_1,\dots, e_{n-1}\}$ \newline
    $s(e_j)=v$, $r(e_j)=w$
 
&   \begin{minipage}{7cm}
    \centering 
    \begin{tikzpicture}[ > = stealth, auto, thick]
            \node (v0) at (0,0) {$v$};
            \node (v1) at (2,0) {$w$};
            \path[->, bend right=40] (v0) edge (v1); 
            \node (e0) at (1,0.7) {$e_1$};
            \path[->, bend left=40] (v0) edge (v1);
            \node (e1) at (1,-0.75) {$e_{n-1}$};
            \node (e2) at (1,0.05) {$\vdots$};
    \end{tikzpicture} 
    \end{minipage}\\


&    $V=\{v,w_1,\dots, w_{n-1}\}$ \newline
     $E=\{e_1,\dots, e_{n-1}\}$ \newline
     $s(e_j)=v$, $r(e_j)=w_j$ 

&   \begin{minipage}{7cm}
    \centering     
    \begin{tikzpicture}[ > = stealth, auto, thick]
            \node (v2) at (5,0) {$v$};
            \node (v3) at (7.2,1) {$w_1$};
            \node (v4) at (7,0) {$\;\;\vdots$};
            \node (v5) at (7.4,-1) {$w_{n-1}$};
            \path[->] (v2) edge (v3); 
            \node (e0) at (5.8,0.7) {$e_1$};
            \path[->, dotted] (v2) edge (v4);
            \path[->] (v2) edge (v5); 
            \node (e0) at (5.8,-0.8) {$e_{n-1}$};
    \end{tikzpicture}
    \end{minipage}
    \\
&&\\    
\hline


$\mathcal O_n$

Cuntz algebra, see also Prop. \ref{hypercuntz}

&    $V=\{v\}$ \newline
     $E=\{e_1,\dots, e_n\}$ \newline
     $s(e_j)=v, r(e_j)=v$ 

&   \begin{minipage}{7cm}
    \centering    
    \begin{tikzpicture}[ > = stealth, auto, thick]
            \clip(-0.5,-1.5) rectangle (3,1.5);
            \node (v0) at (0,0) {$v$};
            \path[->,in=-50,out=50,loop,scale=2] (v0) edge (v0); 
            \node (e0) at (1.25,0) {$e_1$};
            \node (e0) at (1.85,0) {$\dots$};
            \path[->,in=-50,out=50,loop,scale=9] (v0) edge (v0); 
            \node (e0) at (2.7,0) {$e_{n}$};
    \end{tikzpicture}  
    \end{minipage}\\

&   \newline 
    $V=\{v_1,\dots, v_n\}$ \newline
     $E=\{e_1,\dots, e_n\}$ \newline
     $s(e_j)=v_j, r(e_j)=\{v_1, \dots, v_n\}$ 

&  \begin{minipage}{7cm}
    \centering    
    \begin{tikzpicture}[ > = stealth, auto, thick]
            \clip(-1.5,-4) rectangle (5,3);
            \node (v0) at (0,0) {$v_1$};
            \node (v1) at (2,2) {$v_2$};
            \node (v2) at (4,0) {$\dots$};
            \node (v3) at (2,-2) {$v_n$};
            \path[->,out=210,in=150,loop,scale=1, blue, line width=1.7pt] (v0) edge (v0); 
            \node (e0) at (0.4,1.3) {$e_1$};
            \path[->,out=120,in=60,loop,scale=1, green, line width=1.3pt] (v1) edge (v1); 
            \node (e0) at (3.5,1.35) {$e_{2}$};
            \path[->,out=300,in=240,loop,scale=1, orange, line width=0.4pt] (v3) edge (v3); 
            \node (e0) at (0.4,-1.25) {$e_{n}$};
            \path[->, bend left=20, blue, line width=1.7pt] (v0) edge (v1);
            \path[->, bend left=20, green, line width=1.1pt] (v1) edge (v0);
            \path[->, bend left=20, blue, line width=1.7pt] (v0) edge (v2);
            \path[->, bend left=20, dotted] (v2) edge (v0);
            \path[->, bend left=20, blue, line width=1.7pt] (v0) edge (v3);
            \path[->, bend left=20, orange, line width=0.4pt] (v3) edge (v0);
            \path[->, bend left=20, green, line width=1.1pt] (v1) edge (v2);
            \path[->, bend left=20, dotted] (v2) edge (v1);
            \path[->, bend left=20, green, line width=1.1pt] (v1) edge (v3);
            \path[->, bend left=20, orange, line width=0.4pt] (v3) edge (v1);
            \path[->, bend left=20, dotted] (v2) edge (v3);
            \path[->, bend left=20, orange, line width=0.4pt] (v3) edge (v2);
    \end{tikzpicture}  
    \end{minipage}\\

\hline

$\mathcal{T}$

Toeplitz algebra, see also Prop. \ref{toeplitz}

&    $V=\{v,w\}$ \newline
     $E=\{e,f\}$ \newline
     $s(e)=w, r(e)=v$ \newline
     $s(f)=w, r(f)=w$

&   \begin{minipage}{7cm}
    \centering    
    \begin{tikzpicture}[ > = stealth, auto, thick]
        \node (v0) at (0,0) {$v$};
        \node (v1) at (2,0) {$w$};
        \path[->] (v1) edge (v0); 
        \node (e0) at (1,0.25) {$e$};
        \path[->,in=-50,out=50,loop,scale=3] (v1) edge (v1); 
        \node (e0) at (3.1,0) {$f$};
    \end{tikzpicture}  
    \end{minipage}\\

&    $V=\{v,w\}$ \newline
     $E=\{e,f\}$ \newline
     $s(e)=\{w\}, r(e)=\{v,w\}$

&   \begin{minipage}{7cm}
    \centering    
    \begin{tikzpicture}[ > = stealth, auto, thick]
        \node (v0) at (0,0) {$v$};
        \node (v1) at (2,0) {$w$};
        \path[->] (v1) edge (v0); 
        \node (e0) at (1,0.25) {$e$};
        \path[->,in=-50,out=50,loop,scale=3] (v1) edge (v1); 
        \node (e0) at (3.1,0) {$e$};
    \end{tikzpicture} 
    \end{minipage}\\


&    $V=\{v,w\}$ \newline
     $E=\{e,f\}$ \newline
     $s(e)=\{v,w\}, r(e)=\{w\}$

&   \begin{minipage}{7cm}
    \centering    
    \begin{tikzpicture}[ > = stealth, auto, thick]
        \node (v0) at (0,0) {$v$};
        \node (v1) at (2,0) {$w$};
        \path[->] (v0) edge (v1); 
        \node (e0) at (1,0.25) {$e$};
        \path[->,in=-50,out=50,loop,scale=3] (v1) edge (v1); 
        \node (e0) at (3.1,0) {$e$};
    \end{tikzpicture} 
    \end{minipage}\\

\hline
&&\\
$M_2(C(\T))$

&    $V=\{v,w\}$ \newline
     $E=\{e,f\}$ \newline
     $s(e)=v, r(e)=w$ \newline
     $s(f)=w, r(f)=v$

&   \begin{minipage}{7cm}
    \centering    
    \begin{tikzpicture}[ > = stealth, auto, thick]
        \node (v0) at (0,0) {$v$};
        \node (v1) at (2,0) {$w$};
        \path[->, bend left=30] (v0) edge (v1); 
        \node (e0) at (1,0.6) {$e$};
        \path[->, bend left=30] (v1) edge (v0);
        \node (e0) at (1,-0.6) {$f$};
    \end{tikzpicture} 
    \end{minipage}\\

\hline
&&\\
$C(S^1)*\C^n$

see also Prop. \ref{productgraph}

&    $V=\{v_1,\dots, v_n\}$ \newline
     $E=\{e\}$ \newline
     $s(e)=\{v_1,\dots,v_n\}$\newline
     $r(e)=\{v_1,\dots,v_n\}$ 

&   \begin{minipage}{7cm}
    \centering    
    \begin{tikzpicture}[ > = stealth, auto, thick]
            \node (v1) at (1.5,1.5) {$v_1$};
            \node (v2) at (3,1) {$v_2$};
            \node (v3) at (3,-1) {$\dots$};
            \node (v4) at (1.5,-1.5) {$v_{n-1}$};
            \node (v5) at (0,0) {$v_n$};
            \path[-] (v1) edge (v2); 
            \path[-] (v1) edge (v3); 
            \path[-] (v1) edge (v4); 
            \path[-] (v1) edge (v5); 
            \path[-] (v2) edge (v3); 
            \path[-] (v2) edge (v4); 
            \path[-] (v2) edge (v5); 
            \path[-] (v3) edge (v4); 
            \path[-] (v3) edge (v5); 
            \path[-] (v4) edge (v5); 
    \end{tikzpicture} 
    \end{minipage}\\
&&\\
\hline
&&\\
$\mathcal O_2*\C^n$

see also Prop. \ref{productgraph2}
&    $V=\{v_1,\dots, v_n\}$ \newline
     $E=\{e_1, \dots, e_m\}$ \newline
     $s(e_j)=\{v_1,\dots,v_n\}$\newline
     $r(e_j)=\{v_1,\dots,v_n\}$ 

&   \begin{minipage}{7cm}
    \centering    
    \begin{tikzpicture}[ > = stealth, auto, thick]
            \node (v1) at (1.5,1.5) {$v_1$};
            \node (v2) at (3,1) {$v_2$};
            \node (v3) at (3,-1) {$\dots$};
            \node (v4) at (1.5,-1.5) {$v_{n-1}$};
            \node (v5) at (0,0) {$v_n$};
            \path[-, bend left=10] (v1) edge (v2); 
            \path[-, bend right=10] (v1) edge (v2);
            \path[-, bend left=10] (v1) edge (v3); 
            \path[-, bend right=10] (v1) edge (v3); 
            \path[-, bend left=10] (v1) edge (v4); 
            \path[-, bend right=10] (v1) edge (v4); 
            \path[-, bend left=10] (v1) edge (v5); 
            \path[-, bend right=10] (v1) edge (v5); 
            \path[-, bend left=10] (v2) edge (v3); 
            \path[-, bend right=10] (v2) edge (v3); 
            \path[-, bend left=10] (v2) edge (v4); 
            \path[-, bend right=10] (v2) edge (v4); 
            \path[-, bend left=10] (v2) edge (v5); 
            \path[-, bend right=10] (v2) edge (v5); 
            \path[-, bend left=10] (v3) edge (v4); 
            \path[-, bend right=10] (v3) edge (v4); 
            \path[-, bend left=10] (v3) edge (v5); 
            \path[-, bend right=10] (v3) edge (v5); 
            \path[-, bend left=10] (v4) edge (v5); 
            \path[-, bend right=10] (v4) edge (v5); 
    \end{tikzpicture} 
    \end{minipage}\\

\end{longtable}

\newpage

\section{\textbf{Appendix B --- List of non-amenable hypergraphs}}
\label{Appendix_Non_Amenable_hypergraphs}

In the following we list a bunch of non-amenable hypergraphs. Since we have to  ensure that the remaining quotient is non-nuclear, $n$ must be chosen sufficiently large. The crucial non-nuclear part of the hypergraph is colored blue.

\begin{longtable}[h]{p{0.35\linewidth}p{0.4\linewidth}}
(B.1)\newline
           $V:=\{v_1,\dots v_n\}$\newline
            $E:=\{f\}$ \newline
           $s(f):=\{v_1,\dots, v_n\}$\newline 
           $r(f):=\{v_1,\dots, v_n\}$
 
&    \begin{minipage}{8cm}
    \centering
    \begin{tikzpicture}[ > = stealth, auto, thick, scale=0.8]
            \node (v1) at (1.5,1.5) {$v_1$};
            \node (v2) at (3,1) {$v_2$};
            \node (v3) at (3,-1) {$\dots$};
            \node (v4) at (1.5,-1.5) {$v_{n-1}$};
            \node (v5) at (0,0) {$v_n$};
            \path[-] (v1) edge (v2); 
            \path[-] (v1) edge (v3); 
            \path[-] (v1) edge (v4); 
            \path[-] (v1) edge (v5); 
            \path[-] (v2) edge (v3); 
            \path[-] (v2) edge (v4); 
            \path[-] (v2) edge (v5); 
            \path[-] (v3) edge (v4); 
            \path[-] (v3) edge (v5); 
            \path[-] (v4) edge (v5); 
        \end{tikzpicture}
    \end{minipage}\\

\hline

(B.2)\newline
            $V:=\{w, v_1,\dots v_n\}$\newline
            $E:=\{e, f\}$\newline
            $s(e):=\{w\}$\newline  
            $r(e):=\{v_n\}$\newline
            $s(f):=\{v_1,\dots, v_n\}$\newline
             $r(f):=\{v_1,\dots, v_n\}$
     
    & 
    \begin{minipage}{8cm}
    \centering
    \begin{tikzpicture}[ > = stealth, auto, thick, scale=0.8]
            \node (v0) at (0,0) {$w$};
            \node (v1) at (3.5,1.5) {$\textcolor{blue}{v_1}$};
            \node (v2) at (5,1) {$\textcolor{blue}{v_2}$};
            \node (v3) at (5,-1) {$\textcolor{blue}{\dots}$};
            \node (v4) at (3.5,-1.5) {$\textcolor{blue}{v_{n-1}}$};
            \node (v5) at (2,0) {$v_n$};
            \path[->] (v0) edge (v5); 
            \node (e0) at (1,0.25) {$e$};
            \path[-, blue] (v1) edge (v2); 
            \path[-, blue] (v1) edge (v3); 
            \path[-, blue] (v1) edge (v4); 
            \path[-] (v1) edge (v5); 
            \path[-, blue] (v2) edge (v3); 
            \path[-, blue] (v2) edge (v4); 
            \path[-] (v2) edge (v5); 
            \path[-, blue] (v3) edge (v4); 
            \path[-] (v3) edge (v5);  
            \path[-] (v4) edge (v5); 
        \end{tikzpicture}
         \end{minipage}\\

\hline
 
 (B.3)\newline 
            $V:=\{w_1, w_2, v_1,\dots v_n\}$\newline
            $E:=\{e, f\}$\newline
            $s(e):=\{w_1, w_2\}$\newline
             $r(e):=\{v_n\}$\newline
            $s(f):=\{v_1,\dots, v_n\}$\newline 
            $r(f):=\{v_1,\dots, v_n\}$
     
    & 
        \begin{minipage}{8cm}
    \centering
    \begin{tikzpicture}[ > = stealth, auto, thick, scale=0.8]
            \node (w1) at (0,1) {$w_1$};
            \node (w2) at (0,-1) {$w_2$};
            \node (v1) at (3.5,1.5) {$\textcolor{blue}{v_1}$};
            \node (v2) at (5,1) {$\textcolor{blue}{v_2}$};
            \node (v3) at (5,-1) {$\textcolor{blue}{\dots}$};
            \node (v4) at (3.5,-1.5) {$\textcolor{blue}{v_{n-1}}$};
            \node (v5) at (2,0) {$v_n$};
            \path[->] (w1) edge (v5);
            \node (e0) at (1,0.7) {$e$};
            \path[->] (w2) edge (v5);
            \node (e0) at (1,-0.7) {$e$};
            \path[-, blue] (v1) edge (v2); 
            \path[-, blue] (v1) edge (v3); 
            \path[-, blue] (v1) edge (v4); 
            \path[-] (v1) edge (v5); 
            \path[-, blue] (v2) edge (v3); 
            \path[-, blue] (v2) edge (v4); 
            \path[-] (v2) edge (v5); 
            \path[-, blue] (v3) edge (v4); 
            \path[-] (v3) edge (v5);  
            \path[-] (v4) edge (v5); 
        \end{tikzpicture}
               \end{minipage}\\

\hline

(B.4)\newline
            $V:=\{w, v_1,\dots v_n\}$\newline
            $E:=\{e, f\}$\newline
            $s(e):=\{w\}$\newline
             $r(e):=\{v_{n-1}, v_n\}$\newline
            $s(f):=\{v_1,\dots, v_n\}$\newline  
            $r(f):=\{v_1,\dots, v_n\}$
    
    &    
        \begin{minipage}{8cm}
    \centering
        \begin{tikzpicture}[ > = stealth, auto, thick, scale=0.8]
            \node (w1) at (1,-1.5) {$w$};
            \node (v1) at (3.5,1.5) {$\textcolor{blue}{v_1}$};
            \node (v2) at (5,1) {$\textcolor{blue}{v_2}$};
            \node (v3) at (5,-1) {$\textcolor{blue}{\dots}$};
            \node (v4) at (3.5,-1.5) {$v_{n-1}$};
            \node (v5) at (2,0) {$v_n$};
            \path[->] (w1) edge (v5);
            \node (e0) at (1.2,-0.6) {$e$};
            \path[->] (w1) edge (v4);
            \node (e0) at (2,-1.75) {$e$};
            \path[-, blue] (v1) edge (v2); 
            \path[-, blue] (v1) edge (v3); 
            \path[-] (v1) edge (v4); 
            \path[-] (v1) edge (v5); 
            \path[-, blue] (v2) edge (v3); 
            \path[-] (v2) edge (v4); 
            \path[-] (v2) edge (v5); 
            \path[-] (v3) edge (v4); 
            \path[-] (v3) edge (v5);  
            \path[-] (v4) edge (v5); 
        \end{tikzpicture}
                       \end{minipage}\\

\hline
 
 (B.5)\newline  
            $V:=\{w_1, w_2, v_1,\dots v_n\}$\newline
            $E:=\{e_1, e_2, f\}$\newline
            $s(e_1):=\{w, v_{n-1}\}$\newline
             $r(e_1):=\{v_{n-1}, v_n\}$\newline
            $s(e_2):=\{w, v_{n-1}\}$\newline 
            $r(e_2):=\{v_{n-1}, v_n\}$\newline
            $s(f):=\{v_1,\dots, v_n\}$\newline  
            $r(f):=\{v_1,\dots, v_n\}$
            
        &
      \begin{minipage}{8cm}
    \centering
        \begin{tikzpicture}[ > = stealth, auto, thick, scale=0.8]
            \node (w1) at (0,0) {$w_1$};
            \node (w2) at (3.5,-3.5) {$w_2$};
            \node (v1) at (3.5,1.5) {$\textcolor{blue}{v_1}$};
            \node (v2) at (5,1) {$\textcolor{blue}{v_2}$};
            \node (v3) at (5,-1) {$\textcolor{blue}{\dots}$};
            \node (v4) at (3.5,-1.5) {$v_{n-1}$};
            \node (v5) at (2,0) {$v_n$};
            \path[->] (w1) edge (v5);
            \node (e0) at (1,0.25) {$e_1$};
            \path[->] (w2) edge (v4);
            \node (e0) at (3.8,-2.5) {$e_2$};
            \path[-, blue] (v1) edge (v2); 
            \path[-, blue] (v1) edge (v3); 
            \path[-] (v1) edge (v4); 
            \path[-] (v1) edge (v5); 
            \path[-, blue] (v2) edge (v3); 
            \path[-] (v2) edge (v4); 
            \path[-] (v2) edge (v5); 
            \path[-] (v3) edge (v4); 
            \path[-] (v3) edge (v5);  
            \path[-] (v4) edge (v5); 
        \end{tikzpicture}
                          \end{minipage}\\

\hline
  
  (B.6)\newline 
                $V:=\{v_1,\dots v_n\}$\newline
                $E:=\{e, f\} $\newline
                $s(e):=\{v_{n-1}\}$\newline  
                $r(e):=\{v_n\}$\newline
                $s(f):=\{v_1,\dots, v_n\}$\newline  
                $r(f):=\{v_1,\dots, v_n\}$
        &
             \begin{minipage}{8cm}
    \centering
    \begin{tikzpicture}[ > = stealth, auto, thick, scale=0.8]
            \node (v1) at (3.5,1.5) {$\textcolor{blue}{v_1}$};
            \node (v2) at (5,1) {$\textcolor{blue}{v_2}$};
            \node (v3) at (5,-1) {$\textcolor{blue}{\dots}$};
            \node (v4) at (3.5,-1.5) {$v_{n-1}$};
            \node (v5) at (2,0) {$v_n$};
            \path[->, bend left=30] (v4) edge (v5);
            \node (e0) at (2,-1) {$e$};
            \path[-, blue] (v1) edge (v2); 
            \path[-, blue] (v1) edge (v3); 
            \path[-] (v1) edge (v4); 
            \path[-] (v1) edge (v5); 
            \path[-, blue] (v2) edge (v3); 
            \path[-] (v2) edge (v4); 
            \path[-] (v2) edge (v5); 
            \path[-] (v3) edge (v4); 
            \path[-] (v3) edge (v5);  
            \path[-] (v4) edge (v5); 
        \end{tikzpicture}
                              \end{minipage}\\

\hline

(B.7)\newline  
            $V:=\{w, v_1,\dots v_n\}$\newline
            $E:=\{e, f\}$\newline
            $s(e):=\{w, v_{n-1}\}$\newline
              $r(e):=\{v_n\}$\newline
            $s(f):=\{v_1,\dots, v_n\}$\newline 
            $r(f):=\{v_1,\dots, v_n\}$
        &
                     \begin{minipage}{8cm}
    \centering
        \begin{tikzpicture}[ > = stealth, auto, thick, scale=0.8]
            \node (w1) at (0,0) {$w$};
            \node (v1) at (3.5,1.5) {$\textcolor{blue}{v_1}$};
            \node (v2) at (5,1) {$\textcolor{blue}{v_2}$};
            \node (v3) at (5,-1) {$\textcolor{blue}{\dots}$};
            \node (v4) at (3.5,-1.5) {$v_{n-1}$};
            \node (v5) at (2,0) {$v_n$};
            \path[->] (w1) edge (v5);
            \node (e0) at (1,0.25) {$e$};
            \path[->, bend left=30] (v4) edge (v5); 
            \node (e0) at (1.9,-1) {$e$};
            \path[-, blue] (v1) edge (v2); 
            \path[-, blue] (v1) edge (v3); 
            \path[-] (v1) edge (v4); 
            \path[-] (v1) edge (v5); 
            \path[-, blue] (v2) edge (v3); 
            \path[-] (v2) edge (v4); 
            \path[-] (v2) edge (v5); 
            \path[-] (v3) edge (v4); 
            \path[-] (v3) edge (v5);  
            \path[-] (v4) edge (v5); 
        \end{tikzpicture}
                                     \end{minipage}\\

\hline
   
   (B.8)\newline   
            $V:=\{w, v_1,\dots v_n\}$\newline
            $E:=\{e, f\}$\newline
            $s(e):=\{w, v_{n-1}\}$\newline 
            $r(e):=\{v_{n-1}, v_n\}$\newline
            $s(f):=\{v_1,\dots, v_n\}$\newline  
            $r(f):=\{v_1,\dots, v_n\}$
            
        &
                     \begin{minipage}{8cm}
    \centering
        \begin{tikzpicture}[ > = stealth, auto, thick, scale=0.8]
            \node (w1) at (0,0) {$w$};
            \node (v1) at (3.5,1.5) {$\textcolor{blue}{v_1}$};
            \node (v2) at (5,1) {$\textcolor{blue}{v_2}$};
            \node (v3) at (5,-1) {$\textcolor{blue}{\dots}$};
            \node (v4) at (3.5,-1.5) {$v_{n-1}$};
            \node (v5) at (2,0) {$v_n$};
            \path[->] (w1) edge (v5);
            \node (e0) at (1,0.25) {$e$};
            \path[->] (w1) edge (3.1,-1.4);
            \node (e0) at (1.5,-0.9) {$e$};
            \path[->, bend left=20] (v4) edge (v5); 
            \node (e0) at (2.1,-0.7) {$e$};
            \path[->,in=230,out=310,loop,scale=1] (v4) edge (v4);
            \node (e0) at (3.5,-3) {$e$};
            \path[-, blue] (v1) edge (v2); 
            \path[-, blue] (v1) edge (v3); 
            \path[-] (v1) edge (v4); 
            \path[-] (v1) edge (v5); 
            \path[-, blue] (v2) edge (v3); 
            \path[-] (v2) edge (v4); 
            \path[-] (v2) edge (v5); 
            \path[-] (v3) edge (v4); 
            \path[-] (v3) edge (v5);  
            \path[-] (v4) edge (v5); 
        \end{tikzpicture}
                                          \end{minipage}\\

\hline
(B.9)\newline  
            $V:=\{w, v_1,\dots v_n\}$\newline
            $E:=\{e, f\}$\newline
            $s(e):=\{w\}$\newline 
            $r(e):=\{w\}$\newline
            $s(f):=\{w, v_1,\dots, v_n\}$\newline  
            $r(f):=\{v_1,\dots, v_n\}$
        &
                   \begin{minipage}{8cm}
    \centering
        \begin{tikzpicture}[ > = stealth, auto, thick, scale=0.8]
            \node (v0) at (1.5,0) {$w$};
            \path[->,in=150,out=210,loop,scale=3] (v0) edge (v0); 
            \node (e0) at (0,0) {$e$};
            
            \node (v1) at (5.5,1.5) {$v_1$};
            \node (v2) at (7,1) {$v_2$};
            \node (v3) at (7,-1) {$\dots$};
            \node (v4) at (5.5,-1.5) {$v_{n-1}$};
            \node (v5) at (4,0) {$v_n$};
            \path[-, blue] (v1) edge (v2); 
            \path[-, blue] (v1) edge (v3); 
            \path[-, blue] (v1) edge (v4); 
            \path[-, blue] (v1) edge (v5); 
            \path[-, blue] (v2) edge (v3); 
            \path[-, blue] (v2) edge (v4); 
            \path[-, blue] (v2) edge (v5); 
            \path[-, blue] (v3) edge (v4); 
            \path[-, blue] (v3) edge (v5); 
            \path[-, blue] (v4) edge (v5); 
            
            \path[->] (v0) edge (v1); 
            \path[->] (v0) edge (v2); 
            \path[->] (v0) edge (v3); 
            \path[->] (v0) edge (v4); 
            \path[->] (v0) edge (v5); 
        \end{tikzpicture} 
                                                  \end{minipage}\\
      
\hline
 
 (B.10)\newline
            $V:=\{w, v_1,\dots v_n\}$\newline
            $E:=\{e, f\}$\newline
            $s(e):=\{w, v_n\}$\newline 
            $r(e_1):=\{w\}$\newline
            $s(f):=\{w, v_1,\dots, v_n\}$\newline  
            $r(f):=\{v_1,\dots, v_n\}$
        &
                 \begin{minipage}{8cm}
    \centering
        \begin{tikzpicture}[ > = stealth, auto, thick, scale=0.8]
            \node (v0) at (1.5,0) {$w$};
            \path[->,in=150,out=210,loop,scale=3] (v0) edge (v0); 
            \node (e0) at (0,0) {$e$};
            
            \node (v1) at (5.5,1.5) {$v_1$};
            \node (v2) at (7,1) {$v_2$};
            \node (v3) at (7,-1) {$\dots$};
            \node (v4) at (5.5,-1.5) {$v_{n-1}$};
            \node (v5) at (4,0) {$v_n$};
            \path[-, blue] (v1) edge (v2); 
            \path[-, blue] (v1) edge (v3); 
            \path[-, blue] (v1) edge (v4); 
            \path[-] (v1) edge (v5); 
            \path[-, blue] (v2) edge (v3); 
            \path[-, blue] (v2) edge (v4); 
            \path[-] (v2) edge (v5); 
            \path[-, blue] (v3) edge (v4); 
            \path[-] (v3) edge (v5);  
            \path[-] (v4) edge (v5); 
            
            \path[->] (v5) edge (v0); 

        \end{tikzpicture}        
                                                \end{minipage}\\
          
  \hline      
 
 (B.11)\newline
            $V:=\{w, v_1,\dots v_n\}$\newline
            $E:=\{e_1, e_2, e_3, f\}$\newline
            $s(e_1):=\{w\}$\newline
            $r(e_1):=\{w\}$\newline
            $s(e_2):=\{w\}$\newline
             $r(e_2):=\{v_n\}$\newline
            $s(e_3):=\{v_n\}$\newline 
            $r(e_3):=\{w\}$\newline
            $s(f):=\{w, v_1,\dots, v_n\}$\newline  
            $r(f):=\{v_1,\dots, v_n\}$
        &
               \begin{minipage}{8cm}
    \centering
        \begin{tikzpicture}[ > = stealth, auto, thick, scale=0.8]
            \node (v0) at (1.5,0) {$w$};
            \path[->,in=150,out=210,loop,scale=3] (v0) edge (v0); 
            \node (e0) at (-0.2,0) {$e_1$};
            
            \node (v1) at (5.5,1.5) {$v_1$};
            \node (v2) at (7,1) {$v_2$};
            \node (v3) at (7,-1) {$\dots$};
            \node (v4) at (5.5,-1.5) {$v_{n-1}$};
            \node (v5) at (4,0) {$v_n$};
            \path[-, blue] (v1) edge (v2); 
            \path[-, blue] (v1) edge (v3); 
            \path[-, blue] (v1) edge (v4); 
            \path[-] (v1) edge (v5); 
            \path[-, blue] (v2) edge (v3); 
            \path[-, blue] (v2) edge (v4); 
            \path[-] (v2) edge (v5); 
            \path[-, blue] (v3) edge (v4); 
            \path[-] (v3) edge (v5);  
            \path[-] (v4) edge (v5); 
            
            \path[->, bend left=20] (v5) edge (v0); 
            \node (e0) at (2.7,0.5) {$e_2$};
            \path[->, bend left=20] (v0) edge (v5); 
            \node (e0) at (2.7,-0.5) {$e_3$};
        \end{tikzpicture}     
                                                \end{minipage}
\end{longtable}

\end{document}